\theoremstyle{plain}
\newtheorem{main}{Theorem}
\newtheorem{maincor}[main]{Corollary}
\newtheorem{theorem}{Theorem}[section]
\newtheorem{lemma}[theorem]{Lemma}
\newtheorem{proposition}[theorem]{Proposition}
\newtheorem{corollary}[theorem]{Corollary}
\theoremstyle{remark}
\newtheorem{remark}[theorem]{Remark}
\newcommand{\quand}{\quad\text{and}\quad}
\newcommand{\Leb}{\operatorname{vol}}
\newcommand{\C}{\operatorname{C}}
\newcommand{\tf}{\tilde{f}}
\newcommand{\tg}{\tilde{g}}
\newcommand{\tx}{\tilde{x}}
\newcommand{\tphi}{\tilde{\phi}}
\newcommand{\diam}{\operatorname{diam}}
           \def\ea{\end{array}}
          \def\ec{\end{center}}
     \def\ed{\end{description}}
        \def\ee{\end{equation}}
       \def\eea{\end{eqnarray}}
     \def\eeaa{\end{eqnarray*}}
 \def\et{\end{thebibliography}}
\def\Orb{{\rm Orb}}
\def\real{\mathbb R}
\def\supp{\operatorname{supp}}
\def\cA{{\mathcal A}}
\def\cD{{\mathcal D}}
\def\cK{{\mathcal K}}
\def\cU{{\mathcal U}}
\def\cV{{\mathcal V}}
\def\cB{{\mathcal B}}
\def\cF{{\mathcal F}}
\def\id{\operatorname{id}}
\def\length{\operatorname{length}}
\def\vep{\varepsilon}
\def\TT{{\mathbb T}}
\def\RR{{\mathbb R}}
\def\ZZ{{\mathbb Z}}
\title[Measure-theoretical properties of center foliations]{Measure-theoretical properties of center foliations}
\author{Marcelo Viana and Jiagang Yang}
\date{\today}
\thanks{M.V. and J.Y. were partially supported by CNPq, FAPERJ, and PRONEX}
\address{IMPA, Est. D. Castorina 110, 22460-320 Rio de Janeiro, Brazil}
\email{viana\@@impa.br}
\address{Departamento de Geometria, Instituto de Matem\'atica e Estat\'\i stica, Universidade Federal Fluminense, Niter\'oi, Brazil}
\email{yangjg\@@impa.br}
\begin{document}

\begin{abstract}
Center foliations of partially hyperbolic diffeomorphisms may exhibit pathological behavior from a measure-theoretical viewpoint:
quite often, the disintegration of the ambient volume measure along the center leaves consists of atomic
measures. We add to this theory by constructing stable examples for which the disintegration is singular without being atomic.
In the context of diffeomorphisms with mostly contracting center direction, for which upper leafwise absolute continuity is
known to hold, we provide examples where the center foliation is not lower leafwise absolutely continuous.
\end{abstract}

\maketitle

\tableofcontents

\setcounter{tocdepth}{1} \tableofcontents

\section{Introduction}
As is the case for many other developments in Dynamics over the last half century, the subject of this paper goes back to the
work of Dmitry Viktorovich Anosov.

Anosov's remarkable proof~\cite{An67} that the geodesic flow on any manifold with negative curvature is ergodic introduced two
major ingredients in Dynamics. The first one was the observation that those geodesic flows are \emph{hyperbolic},
which implies that they carry certain invariant -- stable and unstable -- foliations.
The second one is the proof that those foliations, while not being smooth, are still regular enough (\emph{absolute continuity})
that a version of the Hopf ergodicity argument can be applied.

Here we consider maps exhibiting a weaker (partial) form of hyperbolicity and we want to study the properties of the invariant
\emph{center} foliations. Before stating our results in precise terms, let us briefly outline how this field has evolved.

\subsection{Partial hyperbolicity}
Recall that a diffeomorphism $f:M\to M$ on a compact manifold is called an \emph{Anosov diffeomorphism} (or \emph{globally hyperbolic})
if there exists a decomposition $TM = E^s \oplus E^u$ of the tangent bundle $TM$ into two continuous sub-bundles $x\mapsto E^s_x$
and $x\mapsto E^u_x$ such that
\begin{itemize}
\item both $E^s$ and $E^u$ are invariant under the derivative $Df$ and
\item $Df \mid E^s$ is a uniform contraction and $Df\mid E^u$ is a uniform expansion.
\end{itemize}
Such systems form an open (possibly, empty) subset of the space of $C^r$ diffeomorphisms of $M$, for any $r\ge 1$.

A distinctive feature of Anosov diffeomorphisms is that they admit invariant foliations $\cF^s$ and $\cF^u$ that are tangent to
the sub-bundles $E^s$ and $E^u$ at every point. Consequently, the leaves of $\cF^s$ are contracted by the forward iterates,
whereas the leaves of $\cF^u$ are contracted by backward iterates of $f$. The leaves are smooth immersed sub-manifolds but,
in general, these foliations are not differentiable, that is, they can not be ``straightened'' by means of $C^1$ local charts.
However, after Anosov, Sinai~\cite{An67,AS67}, we know that they do have a crucial differentiability property, called
\emph{absolute continuity}: assuming the derivative $Df$ is H\"older continuous, the holonomy maps of both $\cF^s$ and $\cF^u$
map zero Lebesgue measure sets to zero Lebesgue sets. Indeed, this fact lies at the heart of Anosov's proof that the geodesic
flow on manifolds with negative curvature is ergodic.

By the early 1970's, Brin, Pesin~\cite{BP74} were proposing to extend the class of Anosov diffeomorphisms to what they
called \emph{partially hyperbolic} diffeomorphisms. A similar proposal was made by Pugh, Shub~\cite{PSh72} independently
and at about the same time. By partially hyperbolic, we mean in this paper\footnote{Brin, Pesin used a stronger definition
that is sometimes called \emph{absolute partial hyperbolicity}. See Hirsch, Pugh, Shub~\cite[pages~3--5]{HPS77}.}
that there exists a decomposition $TM = E^{ss} \oplus E^c \oplus E^{uu}$ of the tangent bundle $TM$ into three continuous
sub-bundles $x\mapsto E^{ss}_x$ and $x\mapsto E^c_x$ and $x\mapsto E^{uu}_x$ such that
\begin{itemize}
\item[(i)] all three sub-bundles $E^{ss}$ and $E^c$ and $E^{uu}$ are invariant under the derivative $Df$ and
\item[(ii)] $Df \mid E^{ss}$ is a uniform contraction, $Df\mid E^{uu}$ is a uniform expansion and
\item[(iii)] $Df \mid E^c$ lies in between them:
for some choice of a Riemannian metric on $M$ (see Gourmelon~\cite{Go07}), one has
$$
\frac{\|Df(x)v^s\|}{\|Df(x)v^c\|} \le \frac 12
\quand
\frac{\|Df(x)v^c\|}{\|Df(x)v^u\|} \le \frac 12
$$
for any unit vectors $v^s\in E^{ss}$ and $v^c\in E^c$ and $v^u\in E^{uu}$ and any $x\in M$ (we say that $E^c$ \emph{dominates} $E^{ss}$
and $E^{uu}$ \emph{dominates} $E^c$, respectively.)
\end{itemize}

Again, partially hyperbolic diffeomorphisms form an open subset of the space of $C^r$ diffeomorphisms of $M$, for any $r\ge 1$.
The class of manifolds for which this set is non-empty is far from being completely understood.

\subsection{Stable and unstable foliations}

Part of what was said before about Anosov diffeomorphisms extends to this class. Namely, the \emph{strong-stable sub-bundle} $E^{ss}$
and the \emph{strong-unstable sub-bundle} $E^{uu}$ are still uniquely integrable, that is, there are unique foliations
$\cF^{ss}$ and $\cF^{uu}$ whose leaves are smooth immersed sub-manifolds of $M$ tangent to $E^{ss}$ and $E^{uu}$, respectively, at every point.
Moreover, these so-called \emph{strong-stable foliation} and \emph{strong-unstable foliation} are still absolutely continuous.
This fact plays a key role in the ergodic and geometric theory of such systems (see Pugh, Shub~\cite{PSh97} and Burns, Wilkinson~\cite{BW10},
for example).
\subsection{Center foliations: existence and (non-)absolute continuity}

The situation for the \emph{center sub-bundle} $E^c$ is a lot more complicated. To begin with, $E^c$ need not be integrable, that
is, there may be no foliation with smooth leaves tangent to $E^c$ at every point. The first example was probably due to
Smale~\cite{Sm67}, see Wilkinson~\cite{Wil98} and Pesin~\cite{Pes04}; other constructions, with interesting additional features,
were proposed by Hammerlindl~\cite{Ham14} and Hertz, Hertz, Ures~\cite{HHUcoh}. This later paper also shows that even when the
center sub-bundle is integrable it may fail to be \emph{uniquely integrable}, that is, curves tangent to $E^c$ may not be
contained in a unique leaf of the integral foliation (\emph{center foliation}).

Notwithstanding, there are also many robust examples of partially hyperbolic diffeomorphisms with uniquely integrable center
sub-bundle. The simplest construction goes as follows. Start with a hyperbolic torus automorphism $A:\TT^3\to\TT^3$
(a similar construction can be carried out in any dimension) with eigenvalues
\begin{equation}\label{eq.Anosov}
\lambda_1 < 1 < \lambda_2 < \lambda_3
\end{equation}
and corresponding eigenspaces $E_1$, $E_2$, $E_3$. $A$ is an Anosov diffeomorphism, of course, and then so is any diffeomorphism $f$
in a $C^1$ neighborhood. However, $A$ may also be viewed as a partially hyperbolic diffeomorphism with invariant sub-bundles $E^{ss}_x=E^1$ and
$E^c_x=E^2$ and $E^{uu}_x=E^3$. Then every $f$ in a $C^1$ neighborhood is also partially hyperbolic. It follows from general
results in \cite{HPS77} that the center (or ``weakly expanding'') bundle $E^c$ of $f$ is uniquely integrable in this case.
Actually, a result of Potrie~\cite{Pot15} implies that the center sub-bundle is integrable for every partially
hyperbolic diffeomorphism in the isotopy class of $A$. Moreover, if one assumes the (stronger) absolute form of partial
hyperbolicity that we alluded to before, it follows from Brin, Burago, Ivanov~\cite{BBI09} that the center sub-bundle is
uniquely integrable for any partially hyperbolic diffeomorphism of $\TT^3$.

Thus, the question naturally arises whether such center foliations are still absolutely continuous. In fact, this question
was first raised by A. Katok in the 1980's, especially for Anosov diffeomorphisms in $\TT^3$ as introduced in the
previous paragraph. Katok also obtained the first example of a center foliation (for a non-invertible map) which is not
absolutely continuous. Indeed, this foliation (see Milnor~\cite{Mi97a}) is such that some full volume subset intersects
each leaf in not more than one point.

Shub, Wilkinson~\cite{SW00} constructed partially hyperbolic, stably ergodic (with respect to volume) diffeomorphisms
whose center leaves are circles and whose center Lyapunov exponent is non-zero, and they observed that for such maps
the center foliation can not be absolutely continuous. Indeed, in a related setting, Ruelle, Wilkinson~\cite{RW01} observed
that the center foliation has \emph{atomic disintegration}: the Rokhlin conditional measures of the volume measure
along the leaves are supported on finitely many orbits. That is the case also in Katok's construction, as observed before,
but it should be noted that in Katok's example the center Lyapunov exponent vanishes. An extension of these results to
diffeomorphisms with higher-dimensional compact center leaves was due to Hirayama, Pesin~\cite{HiP07}.

As a matter of fact, for a large class of partially hyperbolic, volume-preserving diffeomorphisms with one-dimensional
center leaves one has a sharp dichotomy \emph{atomic disintegration} vs. \emph{Lebesgue disintegration}:
the conditional measures of the volume measure along center leaves are either purely atomic or equivalent to the Lebesgue
measure; in this latter case, we also speak of \emph{leafwise absolute continuity}.
This was observed by Avila, Viana, Wilkinson, in two main situations:
\begin{itemize}
\item maps fixing their center leaves, including perturbations of time-one maps of hyperbolic flows~\cite{AVW11};
\item maps with circle center leaves~\cite{AVW12}, including perturbations of certain skew-products, of the type considered in \cite{RW01,SW00}.
\end{itemize}
Moreover, the second alternative is often very rigid: for example, for perturbations of the time-one map of a hyperbolic
flow, it implies that the perturbation is itself the time-one map of a smooth flow.

\subsection{Statement of main results}

Partially hyperbolic diffeomorphisms that are isotopic to Anosov diffeomorphisms have center leaves that are neither compact nor
fixed under the map. The measure-theoretical properties of such center foliations have also been studied by several authors,
especially the intermediate foliations of Anosov diffeomorphisms which we mentioned before.
Saghin, Xia~\cite{SX09} and Gogolev~\cite{Gog12} exhibited conditions under which those intermediate foliations can not
be absolutely continuous. Moreover, Var\~ao~\cite{Var14} gave examples where the disintegration is neither atomic nor
Lebesgue, thus proving that the dichotomy mentioned in the previous paragraph breaks down for such intermediate foliations
of Anosov maps. On the other hand, Ponce, Tahzibi, Var\~ao~\cite{PTV14} prove that atomic disintegration occurs stably in
the isotopy class of certain Anosov automorphisms $A$ of the $3$-torus.

The rigidity phenomenon of \cite{AVW11,AVW12} also does not extend to the non-volume-preserving setting. Indeed, in~\cite{ViY13}
we exhibited stable examples of absolute continuity simultaneously for all invariant foliations (center as well as center-stable
and center-unstable foliations, tangent to $E^{ss}\oplus E^c$ and $E^c\oplus E^{uu}$, respectively).

Our main result in this paper is a criterion for the disintegration of any ergodic measure $\mu$ (not just the volume measure)
to have \emph{uncountable support} along center leaves. By this, we mean that for some choice of a foliation box (in the
sense of \cite[Section~3]{AVW11}) for the center foliation, the supports of the conditional measures of $\mu$ along local
center leaves are uncountable sets.  The criterion applies to partially hyperbolic diffeomorphisms of the $3$-torus in the
isotopy class $\cD(A)$ of an automorphism $A$ as in \eqref{eq.Anosov}:

\begin{main}\label{main.criterion}
Let $\mu$ be an ergodic invariant probability measure of $f\in\cD(A)$ with $h_\mu(f)>\log\lambda_3$.
Then every full $\mu$-measure set $Z \subset M$ intersects almost every center leaf on an uncountable subset.
Moreover, the center Lyapunov exponent along the center direction is non-negative, and even strictly positive if $f$ is $C^2$.
\end{main}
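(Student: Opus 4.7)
The plan is to exploit the Franks--Manning semiconjugacy between $f$ and $A$: push $\mu$ down to an $A$-invariant measure, use the very explicit form that the Ledrappier--Young formula takes for the linear map $A$, and then lift the resulting disintegration back to $f$.

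Concretely, let $h\colon\TT^3\to\TT^3$ satisfy $h\circ f = A\circ h$, with lift $\tilde h\colon\RR^3\to\RR^3$ at bounded distance from the identity. By results of Ures, Hertz--Hertz--Ures and Potrie in this setting, $h$ sends strong-unstable leaves of $f$ onto strong-unstable leaves of $A$, sends $f$-center leaves into $A$-center leaves, and every fiber $h^{-1}(y)$ is either a point or a compact arc, of uniformly bounded diameter, inside a single $f$-center leaf. Setting $\nu:=h_*\mu$, the action of $f$ on fibers is essentially trivial (bounded-length arcs moved by $f$ with bi-Lipschitz constants bounded in $n$), so a standard fiber-entropy argument of Bowen type gives $h_\nu(A)=h_\mu(f)>\log\lambda_3$.

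Next I apply the Ledrappier--Young formula to the linear map $A$: since its unstable bundle splits as $E_2\oplus E_3$ into one-dimensional sub-bundles with Lyapunov exponents $\log\lambda_2<\log\lambda_3$,
\[
h_\nu(A)\;=\;\log\lambda_3\cdot\delta^{uu}(\nu)\,+\,\log\lambda_2\cdot\delta^c(\nu),\qquad \delta^{uu}(\nu),\,\delta^c(\nu)\in[0,1].
\]
Since $\log\lambda_3\cdot\delta^{uu}(\nu)\le\log\lambda_3$, the hypothesis forces $\log\lambda_2\cdot\delta^c(\nu)>0$ and hence $\delta^c(\nu)>0$. Consequently, the Rokhlin disintegration of $\nu$ along the center foliation of $A$ has conditionals whose supports have positive Hausdorff dimension, and therefore are uncountable, on $\nu$-almost every $A$-center leaf.

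To transfer this back to $\mu$, the partition by $h$-fibers is measurable and refines the partition by $f$-center leaves, so uniqueness of Rokhlin disintegrations together with $\nu=h_*\mu$ gives $h_*\bigl(\mu_{\cF^c(x)}\bigr)=\nu_{\cF^c(h(x))}$ for $\mu$-a.e.\ $x$. Since the $h$-image of the support of $\mu_{\cF^c(x)}$ contains the support of $\nu_{\cF^c(h(x))}$, the former is uncountable too; hence any full $\mu$-measure set $Z$ satisfies $\mu_{\cF^c(x)}(Z)=1$ and therefore meets $\cF^c(x)$ in an uncountable set, for $\mu$-a.e.\ $x$. For the Lyapunov exponent, the bounded-distance property of $\tilde h$ combined with $h(\cF^{uu}_f)=\cF^{uu}_A$ and the fact that $A$ expands strong unstables at rate exactly $\lambda_3$ yields $\lambda^{uu}_f(\mu)\le\log\lambda_3$; Ruelle's inequality $h_\mu(f)\le\lambda^{uu}_f(\mu)+\lambda^c_f(\mu)_+$ then gives $\lambda^c_f(\mu)_+\ge h_\mu(f)-\log\lambda_3>0$, hence strict positivity in the $C^2$ case, while a weaker $C^1$ version of the same inequality yields only $\lambda^c_f(\mu)\ge0$. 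The main obstacle I anticipate is the transfer step: justifying the zero fiber-entropy equality $h_\nu(A)=h_\mu(f)$, and the faithful projection of the $f$-center Rokhlin disintegration onto the $A$-center one, both depend on the fine geometric structure of the semiconjugacy's fibers inside $f$-center leaves, and require a careful accounting of how measurable partitions subordinate to the two center foliations interact under $h$.
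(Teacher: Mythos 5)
Your argument has two genuine gaps, and the first one is exactly the technical heart of the paper that you skip over in one word (``Consequently''). The quantity $\delta^c(\nu)=\delta_2-\delta_1$ appearing in the Ledrappier--Young formula is a \emph{transverse} dimension attached to the flag $\cF^{uu}\subset\cF^{u}$ (dimension of the conditionals of $\nu$ along $\cF^u$ minus dimension along $\cF^{uu}$); the weak-unstable (center) foliation of $A$ does not occur in the Ledrappier--Young hierarchy at all, so positivity of $\delta^c(\nu)$ does not by itself say anything about the Rokhlin conditionals of $\nu$ along center leaves. Converting the entropy/dimension surplus above $\log\lambda_3$ into non-atomicity of those center conditionals is precisely what the paper proves via the inequality $h_\nu(A,\cF^{u})-h_\nu(A,\cF^{wu})\le\log\lambda_3$ (Proposition~\ref{p.newentropyformula}, which needs the Lipschitz property of $\cF^{wu}$ inside $\cF^u$ --- true for the linear $A$, but false in general, as the Shub--Wilkinson/Ruelle--Wilkinson remark in the paper shows), combined with Propositions~\ref{p.atomic} and~\ref{p.continuous}; alternatively one can invoke Xie's formula for linear toral automorphisms. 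Moreover, even granting your claim, ``supports have positive Hausdorff dimension'' is weaker than what you need: a conditional measure with atoms can have an uncountable, positive-dimensional support while some full-measure set meets the leaf in a countable set, so the conclusion about \emph{every} full $\mu$-measure set requires the conditionals to be atomless, not merely to have uncountable support.

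The Lyapunov-exponent part also does not work as written. The bound $\lambda^{uu}_f(\mu)\le\log\lambda_3$ that you feed into Ruelle's inequality is not what the semiconjugacy gives: the bounded-distance property of $\tilde\phi$ plus quasi-isometry control the growth of \emph{lengths of unstable arcs} (this is how the paper bounds the volume growth rate $G(D)\le\log\lambda_3$), not the pointwise derivative growth along $E^{uu}$ at $\mu$-generic points; without bounded distortion (the map is only $C^1$ here) one cannot pass from arc-length growth to the exponent, and nothing in the hypotheses prevents $\lambda^{uu}(\mu)>\log\lambda_3$. A symptom of the problem is that your argument would yield strict positivity of the center exponent already for $C^1$ diffeomorphisms, which is stronger than the theorem asserts. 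The paper instead proves non-negativity by showing that a negative center exponent forces a full-measure set meeting almost every center leaf in a single point (Theorem~\ref{main.D}), contradicting the first part, and gets strict positivity in the $C^2$ case from Cogswell's inequality $h_\mu(f)\le G(D)$ for disks inside Pesin unstable manifolds together with $G(D)\le\log\lambda_3$. Finally, your transfer of disintegrations assumes the partition into center leaves is measurable and that $h_*\bigl(\mu_{\cF^c(x)}\bigr)=\nu_{\cF^c(h(x))}$; the paper avoids this delicate point by working with the Borel set $Y$ of singleton fibers (Corollary~\ref{c.measurability}, Theorem~\ref{main.C}) and with Corollary~\ref{c.leaf_unique}, pulling back a full-measure set rather than conditional measures --- a fixable but non-trivial issue compared with the two gaps above.
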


By ``almost every center leaf'' we always mean ``every leaf through every point in some full measure subset''.
By definition, a probability measure has \emph{atomic disintegration} along a foliation if there exists a full measure
set that intersects almost every leaf on a countable subset (see  Appendix~\ref{app.atomic} for a more detailed
discussion of this notion). Thus the first conclusion in the theorem means precisely that the disintegration of $\mu$ along
the center foliation is not atomic.

Let us also point out that the bound $\log\lambda_3$ in Theorem~\ref{main.criterion} is sharp.
Indeed, Ponce, Tahzibi~\cite{PT14} constructed an open set of volume preserving deformations $f$ of a linear Anosov
map $A$ in $\TT^3$ for which the volume measure has atomic disintegration; one can easily find diffeomorphisms in
this open set for which the entropy with respect to the volume measure is equal to $\log\lambda_3$.

As an application of Theorem~\ref{main.criterion}, we obtain stable examples of partially hyperbolic,
volume-preserving diffeomorphisms for which the disintegration of the Lebesgue measure along center leaves is neither
Lebesgue nor atomic:

\begin{maincor}\label{main.example}
Let $f\in\cD(A)$ be a volume-preserving, partially hyperbolic $C^2$ diffeomorphism with $h_{\Leb}(f)>\log\lambda_3$
and whose integrated center Lyapunov exponent is greater than $\log\lambda_2$.
Then there exists a neighborhood $\cU\subset\cD(A)$ of $f$ in the space of volume-preserving $C^2$ diffeomorphisms
such that for every $g\in \cU$ the volume measure is ergodic and its disintegration along the center foliation
(restricted to any foliation box) is neither atomic nor Lebesgue.
\end{maincor}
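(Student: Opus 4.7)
The plan has three ingredients beyond Theorem~\ref{main.criterion}: persistence of the two quantitative hypotheses under $C^2$-perturbation, stable ergodicity of $\Leb$, and a rigidity argument to rule out Lebesgue disintegration.

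For the persistence, I would exploit that $E^c$ is a continuous one-dimensional sub-bundle depending continuously on the diffeomorphism (by domination), so the integrated center exponent $\chi^c(g):=\int\log\|Dg|E^c\|\,d\Leb$ is continuous in $g$ in the $C^1$ topology; in particular it stays strictly above $\log\lambda_2$ on a $C^1$-neighborhood of $f$. The entropy $h_{\Leb}(g)$ satisfies Pesin's formula; once $\chi^c(g)>0$ and $\Leb$ is ergodic (handled next), the formula collapses to $h_{\Leb}(g)=\int\log\|Dg|E^{uu}\|\,d\Leb+\chi^c(g)$, which is continuous in $g$. Hence both inequalities persist on a $C^2$-neighborhood $\cU\subset\cD(A)$ of $f$.

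To establish stable ergodicity I would appeal to the standard package for $C^2$ volume-preserving partially hyperbolic diffeomorphisms on $\TT^3$ in $\cD(A)$: accessibility is $C^1$-open-and-dense in this isotopy class and, combined with the non-vanishing center exponent $\chi^c(g)>\log\lambda_2>0$, implies ergodicity via the Burns--Wilkinson criterion. After shrinking $\cU$, every $g\in\cU$ is ergodic. Applying Theorem~\ref{main.criterion} to $\mu=\Leb$ (using $h_{\Leb}(g)>\log\lambda_3$) then produces a full-volume set meeting almost every center leaf in an uncountable subset, which by the discussion following the theorem is precisely the statement that the disintegration of $\Leb$ along $\cF^c_g$ is not atomic.

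To rule out Lebesgue disintegration I would argue by contradiction. If the disintegration were equivalent to Lebesgue along leaves, then $\cF^c_g$ would be leafwise absolutely continuous. Since $g\in\cD(A)$ is leaf-conjugate to the linear model $A$, the rigidity results of Saghin--Xia~\cite{SX09} and Gogolev~\cite{Gog12} force the integrated center Lyapunov exponent of $g$ to coincide with the center exponent of $A$, namely $\log\lambda_2$; this contradicts $\chi^c(g)>\log\lambda_2$. The main obstacle in this plan is the stable-ergodicity step: tying together accessibility, the non-vanishing center exponent, and the $C^2$ version of the Burns--Wilkinson criterion in exactly the form needed for $\cD(A)$. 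The other two steps are respectively a direct invocation of Theorem~\ref{main.criterion} and of the Saghin--Xia--Gogolev rigidity, each of which is already in the literature.
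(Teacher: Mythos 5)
Your overall skeleton (persistence of the two hypotheses via continuity of the integrated exponents and Pesin's formula, ergodicity on a neighborhood, Theorem~\ref{main.criterion} for non-atomicity, an exponent-versus-growth-rate argument against Lebesgue disintegration) matches the paper's, and your treatment of persistence is in fact more explicit than the paper's. But the step you yourself flag as the main obstacle is a genuine gap: ``accessibility is $C^1$-open-and-dense'' only gives ergodicity for a dense open subset of maps near $f$, not for \emph{every} $g$ in a neighborhood, since $f$ is not assumed accessible and density of accessibility does not make all nearby $g$ accessible; so the Burns--Wilkinson criterion cannot be invoked uniformly on $\cU$. The paper closes this by quoting Hammerlindl--Ures~\cite[Theorem~7.2]{HaU14}: every $C^2$ volume-preserving partially hyperbolic diffeomorphism in $\cD(A)$ whose integrated center exponent is nonzero is ergodic. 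Since $g\mapsto\lambda^c(g)$ is continuous and $\lambda^c(f)>\log\lambda_2>0$, this yields ergodicity on a full neighborhood, with no accessibility input at all.

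The second weak point is the appeal to Saghin--Xia~\cite{SX09} and Gogolev~\cite{Gog12} to rule out Lebesgue disintegration. Gogolev's results are for Anosov diffeomorphisms, and the Saghin--Xia criterion compares the integrated exponent along an (expanding) invariant foliation with the volume growth rate of its leaves; a general $g\in\cD(A)$ need not be Anosov and its center bundle need not be uniformly expanded, so neither citation applies off the shelf. Moreover, even granting such a criterion, the contradiction requires knowing that center segments of $g$ grow at exponential rate at most $\log\lambda_2$ -- and this is precisely the nontrivial ingredient the paper supplies: using the Franks semiconjugacy $\phi$ (which maps center leaves of $g$ into center leaves of $A$ and lifts to a map at bounded distance from the identity, \eqref{eq.closetoidentity}) together with the quasi-isometry of lifted center leaves, one gets $G(L)\le\log\lambda_2$ for every center segment $L$; on the other hand, if the conditional measures were absolutely continuous, a positive-volume set of points with $\|Dg^n\mid E^c\|\ge e^{\vep n}\lambda_2^n$ would meet some plaque $L$ in positive leaf-Lebesgue measure, forcing $G(L)\ge\log\lambda_2+\vep$. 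You need either to reproduce this growth estimate or to find a reference valid in the whole isotopy class; as written, your third step rests on results whose hypotheses are not met.
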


It follows that for $g\in \cU$ every full volume set intersects almost every center leaf on an uncountable subset.
A related result was obtained by Var\~ao~\cite{Var14}, however, his construction is more
restrictive (it applies only to certain Anosov
diffeomorphisms close to the linear automorphism $A$) and, in particular, it is not known to be stable.

More precise versions of Theorem~\ref{main.criterion} and Corollary~\ref{main.example} will be presented later.
We also provide examples of yet another kind of measure-theoretical behavior of invariant foliations:
for maps of the type constructed by Kan~\cite{Ka94}, we show that the disintegration of Lebesgue along center
leaves may be absolutely continuous but not equivalent to Lebesgue measure. The precise statement is given in
Theorem~\ref{t.kantype}.

\section{Dimension theory for the center foliaton}

In this section, we prove the following theorem:

\begin{theorem}\label{t.entropyestimation}
Let $\mu$ be any ergodic invariant measure of the linear automorphism $A$. If $h_\mu(A) > \log \lambda_3$,
then every full $\mu$-measure subset $Z$ intersects almost every center leaf in an uncountable set.
\end{theorem}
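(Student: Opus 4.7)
The natural route is through the Ledrappier--Young entropy/dimension formula, which for the linear Anosov $A$ (and any of its ergodic measures $\mu$) decomposes the entropy along the two positive Lyapunov exponents $\log\lambda_2<\log\lambda_3$ as
$$
h_\mu(A) \;=\; \delta^c\log\lambda_2 \;+\; \delta^{uu}\log\lambda_3,
$$
where $\delta^c,\delta^{uu}\in[0,1]$ are the (a.e.\ well-defined) pointwise dimensions of the Rokhlin conditional measures of $\mu$ along $\cF^c$ and along $\cF^{uu}$, respectively. For a linear automorphism this formula is especially clean, since all holonomies are affine and the subordinate partitions of Ledrappier--Young can be taken adapted to the two linear sub-foliations separately.

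From $\delta^{uu}\le 1$ and the hypothesis $h_\mu(A)>\log\lambda_3$ one gets
$$
\delta^c\log\lambda_2 \;>\; (1-\delta^{uu})\log\lambda_3 \;\ge\; 0,
$$
so $\delta^c>0$. The next step is to translate positivity of the center pointwise dimension into a measure-theoretic statement about the conditional measures $\mu^c_x$ of $\mu$ on center leaves: since the center leaves are 1-dimensional and $\delta^c>0$ a.e., the conditional $\mu^c_x$ has positive pointwise dimension at $\mu^c_x$-almost every point, and hence gives mass zero to every single point, and therefore to every countable subset of $\cF^c(x)$.

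The conclusion is then immediate from the Rokhlin disintegration theorem: if $Z\subset M$ has full $\mu$-measure then $\mu^c_x\bigl(Z\cap\cF^c(x)\bigr)=1$ for $\mu$-almost every $x$, and combining with the previous paragraph forces $Z\cap\cF^c(x)$ to be uncountable on a set of full $\mu$-measure.

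The main obstacle, and the one I would take most care with, is the invocation of the Ledrappier--Young formula in the form above: one needs the full dimension-theoretic version (not merely Margulis--Ruelle), and in particular one must know that the contribution of each positive Lyapunov exponent is bounded by that exponent times a dimension coefficient lying in $[0,1]$. An essentially equivalent phrasing, perhaps closer in spirit to the rest of the paper, uses the partial entropies $h^c_\mu$ and $h^{uu}_\mu$ along $\cF^c$ and $\cF^{uu}$: the Ledrappier--Young identity gives $h_\mu(A)=h^c_\mu+h^{uu}_\mu$, the Margulis--Ruelle inequality along the one-dimensional $\cF^{uu}$ yields $h^{uu}_\mu\le\log\lambda_3$, hence $h^c_\mu>0$, and the relation $h^c_\mu=\delta^c\log\lambda_2$ recovers $\delta^c>0$ and completes the argument as before.
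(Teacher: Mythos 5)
Your overall strategy is the same as the paper's: show that the conditional measures of $\mu$ along the center (weak-unstable) foliation of $A$ carry positive partial entropy/dimension, deduce that these conditionals are non-atomic, and conclude via the disintegration theorem that every full $\mu$-measure set meets almost every center leaf uncountably. The second half of your argument is sound and is exactly the content of Proposition~\ref{p.continuous}.

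The genuine gap is in the first half: the formula $h_\mu(A)=\delta^c\log\lambda_2+\delta^{uu}\log\lambda_3$, with $\delta^c$ the pointwise dimension of the conditionals along $\cF^c=\cF^{wu}$ (equivalently, the identity $h_\mu(A)=h^c_\mu+h^{uu}_\mu$ in your second phrasing), is \emph{not} the Ledrappier--Young formula. In \cite{LY85b} the hierarchy of partial entropies and dimensions runs along the flag of \emph{fastest} expanding foliations, $\cF^{uu}\subset\cF^{u}$: the coefficient of $\log\lambda_2$ there is a transverse (quotient) dimension $\delta^{u}-\delta^{uu}$, not the dimension of the conditionals on the actual weak-unstable leaves, and no statement in \cite{LY85b} bounds $h_\mu(A,\cF^{u})-h_\mu(A,\cF^{wu})$ by $\log\lambda_3$. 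That inequality is precisely what your argument needs (it is equivalent to $h^c_\mu\ge h_\mu(A)-\log\lambda_3>0$), and it is the main technical result of this paper, Proposition~\ref{p.newentropyformula}, whose proof occupies most of the section and uses the uniform Lipschitz property of the $\cF^{wu}$-holonomies inside unstable leaves; for linear $A$ it also follows from Xie \cite{Xie14}, but not from \cite{LY85b}. The exchange of roles between the strong and weak foliations that you perform is not automatic and can fail without such a Lipschitz hypothesis: in the Shub--Wilkinson/Ruelle--Wilkinson examples \cite{SW00,RW01} one has $h_\mu(f,\cF^{wu})=0$ while $h_\mu(f,\cF^{u})=\tau^{uu}+\tau^{wu}>\tau^{uu}$, so the analogue of \eqref{eq.maininequality} is false there, as the paper points out. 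Your remark that for a linear automorphism ``all holonomies are affine'' is indeed the reason the needed inequality holds in this case, but it is the content of a substantial proof (or of Xie's theorem), not a corollary of the standard Ledrappier--Young dimension formula; as written, the key step of the argument is assumed rather than proved.
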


The proof of Theorem~\ref{t.entropyestimation}, which will be given at the end of Section~\ref{ss.mainproposition},
is based on the notion of \emph{partial entropy} of an ergodic probability measure along an expanding foliation,
that we describe in Section~\ref{ss.entropy}.

We prove that if the partial entropy is positive then the invariant measure satisfies the conclusion of the theorem.
The other half of the argument is to prove that the partial entropy is indeed positive under the assumptions
of Theorem~\ref{t.entropyestimation}. This is based on an inequality for partial entropies that is stated in
Proposition~\ref{p.newentropyformula} and which is inspired by results of Ledrappier, Young~\cite{LY85b}.

We also get that if the partial entropy is zero then the foliation constitutes a measurable partition,
in the sense of Rokhlin~\cite{Rok67a}, and the conditional measures are Dirac masses. This seems to be known already,
at least for extreme (strong-unstable) laminations, check Ledrappier, Xie~\cite[Remark~1]{LeX11}.

The construction of Ponce, Tahzibi that we mentioned before also shows that the bound $\log\lambda_3$
is sharp in Theorem~\ref{t.entropyestimation}. Indeed, by a result of Franks~\cite{Fra70} (see also
Section~\ref{s.semiconjugacy}) the diffeomorphisms in the open set constructed in~\cite{PT14} are semiconjugate to $A$.
Using this semiconjugacy, one finds an ergodic invariant measure $\mu$ of $A$ whose disintegration is atomic
and whose entropy is equal to $\log\lambda_3$.


\subsection{Entropy along an expanding foliation}\label{ss.entropy}

Let $f:M\to M$ be a diffeomorphism. We say that a foliation $\cF$ is \emph{expanding} if it is invariant and
the derivative $Df$ restricted to the tangent bundle of $\cF$ is uniformly expanding.
It is a classical fact (check~\cite{HPS70}) that if $f$ admits an invariant dominated splitting
$TM = E^{cs}\oplus E^{uu}$ such that $Df \mid E^{uu}$ is uniformly expanding, then $E^{uu}$ is uniquely
integrable; in this case, the integral foliation $\cF^{uu}$ is an example of expanding foliation.
In general, given an expanding foliation, its tangent bundle may not correspond to the strongest expansion
and an invariant transverse sub-bundle need not exist either.

Let $\cF$ be an expanding foliation, $\mu$ be an invariant probability measure, and $\xi$ be a measurable
partition of $M$ with respect to $\mu$.
We say that $\xi$ is \emph{$\mu$-subordinate} to the foliation $\cF$ if for $\mu$-almost every $x$, we have
\begin{itemize}
\item[(A)] $\xi(x) \subset \cF(x)$ and $\xi(x)$ has uniformly small diameter inside $\cF(x)$;
\item[(B)] $\xi(x)$ contains an open neighborhood of $x$ inside the leaf $\cF(x)$;
\item[(C)] $\xi$ is an \emph{increasing partition}, meaning that $f\xi \prec \xi$.
\end{itemize}

\begin{remark}\label{r.partition}
Ledrappier, Strelcyn~\cite{LS82} proved that the Pesin unstable lamination admits some $\mu$-subordinate
measurable partition. The same is true for the strong-unstable foliation $\cF^{uu}$ of any partially
hyperbolic diffeomorphism. In fact, their construction extends easily to any expanding invariant foliation
$\cF$ (including the center foliations of the maps we consider here), as we are going to sketch
(see also~\cite[Lemma 3.2]{Yang16}).
Start by choosing a finite partition $\cA$ with arbitrarily small diameter such that its elements have
small boundary, in the following sense: there exists $c$ smaller than and close to $1$
\begin{equation}\label{eq.small_boundary}
\sum_{k\ge 1} \sum_{A\in\cA} \mu(B_{c^k}(\partial A)) < \infty.
\end{equation}
Let $\cA^\cF$ be a refinement of $\cA$ whose elements are the intersections of elements of $\cA$
with local plaques of $\cF$. Then the partition
$$
\bigvee_{i=0}^\infty f^i(\cA^\cF)
$$
is $\mu$-subordinate to $\cF$.
\end{remark}

In all that follows, it is assumed that $\mu$-subordinate partitions are constructed in this way.
Indeed, this construction yields the following additional property that will be useful later:
\begin{itemize}
\item[(D)] for any $y\in\cF(x)$ there exists $n\ge 1$ such that $f^{-n}(y) \in \xi(f^{-n}(x))$.
\end{itemize}
Let us explain this, since it is not explicitly stated in the previous papers.
Property (B) ensures that there exists a measurable function $x \mapsto r(x)>0$ such that $\xi(x)$
contains the ball of radius $r(x)$ around $x$ inside the leaf $\cF(x)$. By recurrence,
the pre-orbit $f^{-n}(x)$ of $\mu$-almost point returns infinitely often to any region where
$r(\cdot)$ is bounded from zero. On the other hand, the distance from $f^{-n}(y)$ to $f^{-n}(x)$
goes to zero as $n\to\infty$. Thus property (D) follows.

We also need some terminology from \cite[\S~5]{Rok67a}. Given measurable partitions $\eta_1$ and $\eta_2$, let
$H_\mu(\eta_1 \mid \eta_2)$ denote the mean conditional entropy of $\eta_1$ given $\eta_2$.
The entropy of $f$ with respect to a measurable partition $\eta$ is defined by
$h_\mu(f,\eta)=H_\mu(\eta \mid f\eta^+)$ where $\eta^+=\bigvee_{i=0}^\infty f^i\eta$.
Thus $h_\mu(f,\eta)=H_\mu(\eta \mid f\eta)$ whenever $\eta$ is an increasing measurable partition.

The following result is contained in Lemma~3.1.2 of Ledrappier, Young~\cite{LY85a}:

\begin{lemma}\label{l.definitionleafentropy}
Given any expanding foliation $\cF$, we have $h_\mu(f,\xi_1)=h_\mu(f,\xi_2)$
for any measurable partitions $\xi_1$ and $\xi_2$ that are $\mu$-subordinate to $\cF$.
\end{lemma}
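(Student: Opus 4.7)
The plan is to reduce to a common refinement and then appeal to an intrinsic leafwise quantity.

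I would first set $\xi_3 := \xi_1 \vee \xi_2$ and verify that $\xi_3$ is itself $\mu$-subordinate to $\cF$. Property (A) holds because $\xi_3(x) = \xi_1(x) \cap \xi_2(x)$ is contained in $\cF(x)$ with diameter at most that of either factor; property (B) holds because the intersection of two relative neighborhoods of $x$ inside $\cF(x)$ is still a relative neighborhood; property (C) follows from $f\xi_3 = f\xi_1 \vee f\xi_2 \prec \xi_1 \vee \xi_2 = \xi_3$, using that each $\xi_i$ is increasing. By symmetry it then suffices to prove $h_\mu(f,\xi_1) = h_\mu(f,\xi_3)$.

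For this, the key step is to show that the conditional measures $\{\mu_x^\xi\}$, as $\xi$ varies over $\mu$-subordinate partitions, glue together into a family of sigma-finite leafwise measures $\{\nu_x^\cF\}$ on the leaves of $\cF$, defined up to a positive scalar on each leaf and independent of the choice of $\xi$. Applied to $\xi_1 \prec \xi_3$, transitivity of Rokhlin disintegration gives $\mu_x^{\xi_1} = \int \mu_y^{\xi_3}\, d\mu_x^{\xi_1}(y)$, so the two conditional measures are proportional on the shared atom $\xi_3(x)$; property (D) --- that any two leaf-points eventually share a $\xi$-atom under backward iteration --- then propagates this local compatibility to the full leaf. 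With the intrinsic measures in hand, the Rokhlin formula
\begin{equation*}
h_\mu(f,\xi) \;=\; H_\mu(\xi \mid f\xi) \;=\; \int \log \frac{\nu_x^\cF(f\xi(x))}{\nu_x^\cF(\xi(x))}\, d\mu(x)
\end{equation*}
reduces, via a change of variables using the $f$-invariance of $\mu$, to $-\int \log c(x)\, d\mu(x)$, the integrated log leafwise Jacobian of $f$ with respect to $\{\nu_x^\cF\}$. This expression depends only on $\cF$ and $\mu$, not on $\xi$, which proves the lemma.

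The main obstacle is the global gluing of the leafwise measures. Rokhlin's disintegration gives proportionality only on a single shared atom, so one really needs property (D) --- originating from the Ledrappier--Strelcyn-type construction in Remark~\ref{r.partition} and driven by the expanding nature of $\cF$ --- to propagate the compatibility across different atoms of the same leaf via backward iteration. A more pedestrian alternative is to manipulate the chain rule $H_\mu(\xi_3 \mid f\xi_3) = H_\mu(\xi_1 \mid f\xi_3) + H_\mu(\xi_2 \mid \xi_1 \vee f\xi_3)$ together with $f$-invariance, but the bookkeeping becomes delicate and in the end seems to carry the same geometric content in disguise.
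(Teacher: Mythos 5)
First, a point of comparison: the paper does not actually prove this lemma, it quotes it from Ledrappier--Young \cite{LY85a} (Lemma 3.1.2), whose argument is entropy-theoretic, working directly with conditional entropies of increasing partitions. Your route is genuinely different, and its first half is sound: $\xi_3=\xi_1\vee\xi_2$ is indeed $\mu$-subordinate (your verification of (A)--(C) is correct), property (D) holds for any subordinate partition (the paper's derivation of (D) only uses (B), recurrence and expansion), and the gluing of the conditionals into leafwise $\sigma$-finite measures is standard: one works with the coarser partitions $f^n\xi$, whose conditionals are $f^n_*\mu^{\xi}_{f^{-n}x}$ by invariance of $\mu$, and whose atoms exhaust the leaves by (D). This is a legitimate alternative to \cite{LY85a}.

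The gap is in the final step. The family $\{\nu^{\cF}_x\}$ is canonical only up to a positive scalar on each leaf, so ``the leafwise Jacobian $c$ of $f$'' is not a well-defined function: it is defined only modulo multiplicative coboundaries $a/(a\circ f)$, where $a$ is the (merely measurable) ratio between two normalizations. The normalization that makes your change of variables clean is $\nu_x(\xi(x))=1$, which depends on $\xi$; with it, $-\log c_\xi(x)=-\log\mu^{\xi}_x\big((f^{-1}\xi)(x)\big)$, so the identity $h_\mu(f,\xi)=-\int\log c_\xi\,d\mu$ is just a rewriting of \eqref{eq.hg} and carries no new information. The whole content of the lemma is then the comparison of the two normalizations: for $\xi_1\prec\xi_3$ one gets $\log c_{\xi_3}-\log c_{\xi_1}=\psi-\psi\circ f$ with $\psi(x)=-\log\mu^{\xi_1}_x(\xi_3(x))\ge 0$, and one must prove that this coboundary integrates to zero. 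That is not automatic: $\int\psi\,d\mu=H_\mu(\xi_2\mid\xi_1)$ may be infinite, so one cannot simply invoke $f$-invariance term by term; one needs the refined coboundary fact for transfer functions that are only finite a.e. (e.g.\ via Birkhoff sums, using that $\frac1n\,\psi\circ f^n\to 0$ in measure and that $-\log c_{\xi_i}\ge 0$), together with attention to the case where the partial entropies could a priori be infinite. Your sketch asserts the key claim (``this expression depends only on $\cF$ and $\mu$, not on $\xi$'') at exactly this point instead of proving it; supplying the coboundary argument (or falling back on the conditional-entropy manipulations of \cite{LY85a}) is what is missing.
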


This allows us to define the \emph{partial $\mu$-entropy} $h_\mu(f,\cF)$ of an expanding foliation $\cF$
to be $h_\mu(f,\xi)$ for any $\mu$-subordinate partition. Our next goal is to prove that the nature of
the conditional probabilities of $\mu$ along the leaves of the foliation $\cF$ is directly related to
whether the entropy is zero or strictly positive. That is the content of Propositions~\ref{p.atomic}
and~\ref{p.continuous} below. Beforehand, we must introduce a few important ingredients.

Let $\xi$ be any measurable partition $\mu$-subordinate to $\cF$.
Let $\{\mu_x: x\in M\}$ be the disintegration of $\mu$ with respect to $\xi$.
By definition, $\mu_x(\xi(x))=1$ for $\mu$-almost every $x$.
Keep in mind that $H_\mu(\xi \mid f \xi)=H_\mu(f^{-1}\xi \mid \xi)$, because $\mu$ is $f$-invariant.
Moreover, the definition gives that
\begin{equation}\label{eq.hg}
H_\mu(f^{-1}\xi \mid \xi) = \int g \, d\mu,\quad\text{where } g(x)=-\log \mu_x\big((f^{-1}\xi)(x)\big).
\end{equation}

Let $d_{\cF}(\cdot,\cdot)$ denote the distance along $\cF$-leaves. Given any  $x\in M$, $n\ge 0$ and $\vep>0$, let
$$
B_\cF(x,n,\vep)=\{y\in \cF(x): d_{\cF}(f^i(x),f^i(y))<\vep \text{ for } 0 \leq i < n \}.
$$
Then define
$$
\begin{aligned}
\underline{h}_\mu(x,\vep, \xi)
& = \liminf_{n\to \infty} -\frac{1}{n}\log \mu_x(B_\cF(x,n,\vep))\\
\overline{h}_\mu(x,\vep,\xi)
& = \limsup_{n\to \infty} -\frac{1}{n}\log \mu_x (B_\cF(x,n,\vep)).
\end{aligned}
$$

The following statement is contained in Ledrappier-Young~\cite[\S\S (9.2)-(9.3)]{LY85b}:

\begin{proposition}\label{p.calculation}
At $\mu$-almost every $x$,
$$
\lim_{\vep\to 0} \underline{h}_\mu(x,\vep,\xi)=\lim_{\vep\to 0} \overline{h}_\mu(x,\vep,\xi)=H_\mu(\xi|f\xi).
$$
\end{proposition}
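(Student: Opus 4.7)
This is a leafwise, Brin--Katok-type local entropy formula: the leafwise measures $\mu_x$ replace the ambient $\mu$, and $B_\cF$-dynamical balls replace Bowen balls. My plan is to sandwich $\mu_x(B_\cF(x,n,\vep))$ between measures of atoms of well-chosen $\mu$-subordinate partitions of $\cF$ iterated $n$ times, then invoke the relative Shannon--McMillan--Breiman theorem, which asserts that for an increasing $\mu$-subordinate partition $\eta$ one has $-\tfrac1n\log\mu^\eta_x((f^{-n}\eta)(x))\to h_\mu(f,\eta)$ at $\mu$-a.e.\ $x$. The standing ingredients are: uniform expansion of $Df\mid T\cF$ at some rate $\lambda>1$; Lemma~\ref{l.definitionleafentropy}, which makes $h_\mu(f,\cdot)$ independent of the choice of $\mu$-subordinate partition; and the essential uniqueness of leafwise disintegration, which guarantees that two $\mu$-subordinate partitions produce disintegrations that agree up to a measurable positive scalar on each leaf-neighborhood -- harmless after $-\tfrac1n\log$. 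Since $\overline{h}_\mu(x,\vep,\xi)$ and $\underline{h}_\mu(x,\vep,\xi)$ are monotone in $\vep$, the limits as $\vep\to 0$ exist automatically, and it suffices to show $\overline{h}_\mu\le H_\mu(\xi|f\xi)$ for all small $\vep$ and $\underline{h}_\mu\ge H_\mu(\xi|f\xi)-o_\vep(1)$.

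For the upper bound, I would apply Remark~\ref{r.partition} with a Borel partition of diameter $<\vep/2$ to produce a $\mu$-subordinate partition $\xi_\vep$ whose atoms have $\cF$-diameter $<\vep/2$. Uniform expansion then gives $(f^{-n}\xi_\vep)(x)\subset B_\cF(x,n,\vep)$: any $y$ in the atom satisfies $d_\cF(f^n y,f^n x)<\vep/2$, and because $f^{-k}$ contracts along $\cF$ by factor $\lambda^{-k}$, also $d_\cF(f^i y,f^i x)\le\lambda^{i-n}\vep/2<\vep$ for $0\le i<n$. The relative Shannon--McMillan--Breiman theorem applied to $\xi_\vep$ yields $-\tfrac1n\log\mu^\vep_x((f^{-n}\xi_\vep)(x))\to h_\mu(f,\xi_\vep)$, which equals $H_\mu(\xi|f\xi)$ by Lemma~\ref{l.definitionleafentropy}; comparing $\mu^\vep_x$ with $\mu_x$ costs only an $x$-dependent positive scalar that disappears after division by $n$. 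Hence
$$\overline{h}_\mu(x,\vep,\xi)\;\le\;H_\mu(\xi|f\xi).$$

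For the lower bound, the key scale match is that $B_\cF(x,n,\vep)$ has $\cF$-diameter $\le 2\vep\lambda^{-(n-1)}$ while an atom of $f^{-n}\xi$ has $\cF$-diameter $\le D\lambda^{-n}$ (with $D$ a uniform bound on $\xi$-diameters), so the ball intersects at most $K=K(\vep,\lambda,D)$ atoms of $f^{-n}\xi$, with $K$ independent of $n$. For $\vep$ small enough, moreover, $B_\cF(x,n,\vep)\subset\xi(x)$, so $\mu_{y_j}=\mu_x$ for every representative $y_j$ of such an atom (the disintegration is constant on $\xi$-atoms). Writing
$$\mu_x\bigl(B_\cF(x,n,\vep)\bigr)\;\le\;K\max_{1\le j\le K}\mu_x\bigl((f^{-n}\xi)(y_j)\bigr),$$
one reduces to showing that $-\tfrac1n\log\mu_x((f^{-n}\xi)(y))\ge H_\mu(\xi|f\xi)-\delta$ uniformly in $y\in B_\cF(x,n,\vep)$ for $n$ large. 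A maximal-ergodic argument, applied to Birkhoff averages of the information function $g$ from~\eqref{eq.hg} and combined with property~(D) of $\xi$, supplies this uniform lower bound and yields $\underline{h}_\mu(x,\vep,\xi)\ge H_\mu(\xi|f\xi)-O(\tfrac{\log K}{n})$.

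The \emph{main obstacle} is the lower bound: bounding the number $K$ of competing atoms in the ball is a trivial consequence of uniform expansion, but excluding the possibility that one of them carries anomalously large conditional mass (much larger than $e^{-nh}$) requires the maximal/ergodic argument just sketched. This is the technical heart of Ledrappier--Young~\cite[\S\S(9.2)-(9.3)]{LY85b}, whose strategy I would follow.
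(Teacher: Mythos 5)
Your upper bound is sound and is essentially the paper's argument: the paper keeps the original partition $\xi$ and uses property (A) to find $k_\vep(x)$ with $\big(\bigvee_{j=0}^{n+k_\vep(x)}f^{-j}\xi\big)(x)\subset B_\cF(x,n,\vep)$, then applies Birkhoff to the information function $g$ of \eqref{eq.hg}; you instead rebuild a subordinate partition of diameter $<\vep$ and invoke Lemma~\ref{l.definitionleafentropy}, which is the same mechanism (uniform expansion makes this the easy direction), at the cost of the extra -- fixable but glossed-over -- comparison of the two disintegrations, which is only valid on atoms of the common refinement and hence only for $n$ large.

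The lower bound, however, contains a genuine gap. Your counting step is false: an upper bound $D\lambda^{-n}$ on the diameter of atoms of $f^{-n}\xi$ gives no lower bound on their size, so the dynamical ball (equivalently, the leaf segment $f^{n-1}(B_\cF(x,n,\vep))$ of length $\lesssim\vep$) can meet infinitely many atoms of $f^{-n}\xi$: the atoms of $\xi=\bigvee_{i\ge0}f^i\cA^\cF$ inside a fixed leaf segment are cut by the forward images of $\partial\cA$ at every scale, and nothing bounds the number of such cuts by a constant $K(\vep,\lambda,D)$ independent of $n$. Consequently the reduction $\mu_x(B_\cF(x,n,\vep))\le K\max_j\mu_x((f^{-n}\xi)(y_j))$ does not get off the ground; and even granting finitely many atoms, the estimate $-\frac1n\log\mu_x((f^{-n}\xi)(y))\ge H_\mu(\xi\mid f\xi)-\delta$ would be needed at \emph{every} atom meeting the ball, while Birkhoff/SMB give it only almost everywhere -- the ``maximal-ergodic argument'' you defer to is precisely the missing content, not a routine supplement. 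The correct mechanism (this is what Ledrappier--Young \cite[\S(9.2)]{LY85b} actually do, and what the paper cites verbatim rather than reproving) controls how often the orbit approaches $\partial\cA$ via the small-boundary condition \eqref{eq.small_boundary} and Borel--Cantelli, so that dynamical balls are compared with atoms of the refined partition with only subexponential overcounting; that condition never enters your sketch, which is the sign that the real difficulty has not been addressed.
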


\begin{proof}
The proof that $\lim_{\vep \to 0}\underline{h}_\mu(x,\vep, \xi)\geq H_\mu(\xi\mid f \xi)$ is identical
to \cite[\S~(9.2)]{LY85b} and so we omit it. To prove that
$$
\lim_{\vep \to 0}\overline{h}_\mu(x,\vep, \xi)\leq H_\mu(\xi\mid f \xi),
$$
we could invoke \cite[\S~(9.3)]{LY85b}. However, since we take $\cF$ to be (uniformly) expanding,
it is possible to give a much shorter argument, as follows.

Property (A) above implies that for any $\vep>0$ there is $k_\vep(x) \ge 1$ such that
$$
\diam_{\cF}(f^{-m}\xi(x))<\vep
\quad\text{for any $m \ge k_\vep(x)$.}
$$
This ensures that, for every $x$, $n \ge 1$ and $m \geq k_\vep(x)$,
$$
\big(\bigvee_{j=0}^{n+m}f^{-j}\xi\big)(x) \subset B_\cF(x,n,\vep).
$$
Then
\begin{equation*}
\begin{aligned}
\overline{h}(x,\vep, \xi)
& = \limsup_{n\to \infty} -\frac{1}{n}\log \mu_x(B_\cF(x,n,\vep))\\
& \leq \limsup_{n\to \infty} -\frac{1}{n}\log \mu_x\big((\bigvee_{j=0}^{n+k_\vep(x)}f^{-j}\xi) (x)\big)\\
&=\limsup_{n\to \infty}\frac{1}{n}\sum_{j=0}^{n+k_\vep(x)-1} g(f^j(x)).
\end{aligned}
\end{equation*}
By ergodicity, and the Birkhoff theorem, this means that
\begin{equation*}
\overline{h}(x,\vep, \xi)
\leq \int g \, d\mu = H_\mu(f^{-1}\xi \mid \xi).
\end{equation*}
This proves the claim.
\end{proof}

\subsection{Partial entropy and disintegration}

We are ready to prove that vanishing partial entropy corresponds to an atomic disintegration:

\begin{proposition}\label{p.atomic}
The following conditions are equivalent:
\begin{itemize}
\item[(a)] $h_\mu(f,\cF)=0$;
\item[(b)] there is a full $\mu$-measure subset that intersects each leaf on exactly one point;
\item[(c)] there is a full $\mu$-measure subset that intersects each leaf on a countable subset.
\end{itemize}
\end{proposition}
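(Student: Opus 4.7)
The strategy is the cycle \emph{(a)}$\Rightarrow$\emph{(b)}$\Rightarrow$\emph{(c)}$\Rightarrow$\emph{(a)}, with \emph{(b)}$\Rightarrow$\emph{(c)} immediate. The crux is that \emph{(a)} is in fact equivalent to the Dirac condition $\mu_x=\delta_x$ at $\mu$-a.e.\ $x$; this bridges both nontrivial implications via the Brin--Katok/Ledrappier--Young type formula of Proposition~\ref{p.calculation} together with the uniform expansion of $\cF$.

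For \emph{(a)}$\Rightarrow$\emph{(b)}: $f$-invariance of $\mu$ together with the chain rule gives $H_\mu(f^{-n}\xi\mid\xi)=n\,H_\mu(f^{-1}\xi\mid\xi)=n\,H_\mu(\xi\mid f\xi)=0$, so \eqref{eq.hg} implies $\mu_x\big((f^{-n}\xi)(x)\big)=1$ for every $n\ge 0$ at $\mu$-a.e.\ $x$. Property (A) bounds $\diam_\cF\xi(z)$ uniformly, and uniform expansion of $f|_\cF$ yields $\diam_\cF(f^{-n}\xi)(x)=\diam_\cF f^{-n}\big(\xi(f^n(x))\big)\le\lambda^{-n}\sup_z\diam_\cF\xi(z)\to 0$, so $\bigcap_{n\ge 0}(f^{-n}\xi)(x)=\{x\}$ and $\mu_x=\delta_x$ almost surely. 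Set $A=\{x:\mu_x=\delta_x\}$ and $A'=\bigcap_{n\in\ZZ}f^n(A)$, an $f$-invariant full-$\mu$-measure set. For $x\in A'$ and $y\in\cF(x)\setminus\{x\}$ with $y\in A'$, property (D) provides $n\ge 1$ with $f^{-n}(y)\in\xi(f^{-n}(x))$; the constancy of the conditional measures on a common $\xi$-atom then forces $\delta_{f^{-n}(y)}=\mu_{f^{-n}(y)}=\mu_{f^{-n}(x)}=\delta_{f^{-n}(x)}$, hence $x=y$, a contradiction. Thus $A'\cap\cF(x)=\{x\}$ for every $x\in A'$, which is \emph{(b)}.

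For \emph{(c)}$\Rightarrow$\emph{(a)}: a full-$\mu$-measure set $Z$ with $Z\cap\cF(x)$ countable $\mu$-a.e.\ forces each $\mu_x$ to be purely atomic, with all atoms in $\xi(x)\cap Z$. I would replace $Z$ by $Z'=\{x\in Z:\mu_x(\{x\})>0\}$: since $\mu_y(\{y\})=\mu_x(\{y\})$ for $y\in\xi(x)$, every atom of $\mu_x$ lies in $Z'$, so $\mu_x(Z')=1$ and hence $\mu(Z')=1$. For $x\in Z'$ and any $\vep>0$, uniform expansion along $\cF$ gives $d_\cF(f^i(x),f^i(y))\ge\lambda^i d_\cF(x,y)\to\infty$ for every $y\in\cF(x)\setminus\{x\}$, so $B_\cF(x,n,\vep)\searrow\{x\}$ as $n\to\infty$; therefore $\mu_x(B_\cF(x,n,\vep))\to\mu_x(\{x\})>0$ and $\overline{h}_\mu(x,\vep,\xi)=0$. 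Proposition~\ref{p.calculation} then yields $h_\mu(f,\cF)=H_\mu(\xi\mid f\xi)=0$. The step I expect to be the most delicate is the refinement from $Z$ to $Z'$: pure atomicity of $\mu_x$ does not a priori place a positive-mass atom at $x$ itself, and the remedy relies precisely on the constancy of the conditional measures along $\xi$-atoms to relocate the full-measure set onto genuine atoms without losing measure.
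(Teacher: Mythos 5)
Your proof is correct and follows essentially the same route as the paper: (a)$\Rightarrow$(b) via $H_\mu(f^{-n}\xi\mid\xi)=nH_\mu(f^{-1}\xi\mid\xi)=0$, the uniform shrinking of $(f^{-n}\xi)(x)$ forcing $\mu_x=\delta_x$, and property (D) to pass from $\xi$-atoms to whole leaves; and (c)$\Rightarrow$(a) via atomicity of $\mu_x$, constancy of conditionals along $\xi$-atoms, and Proposition~\ref{p.calculation}. Your refinement of $Z$ to $Z'$ is just a slightly more explicit packaging of the paper's step of evaluating at a point $y\in\xi(x)$ with $\mu_x(\{y\})>0$.
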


\begin{proof}
Let $\xi$ be any partition $\mu$-subordinate to $\cF$.

To prove that (c) implies (a), let $\Gamma$ be a full $\mu$-measure subset whose intersection with every
leaf is countable. Replacing $\Gamma$ by a suitable full $\mu$-measure subset, we may assume that
the conclusion of Proposition~\ref{p.calculation} holds, $\mu_x$ is well defined and $\mu_x(\Gamma\cap\xi(x))=1$
for any point $x\in \Gamma$. The latter implies that $\mu_x$ is an atomic measure,
because $\Gamma \cap \xi(x)$ is taken to be countable.
Take any $y\in \Gamma \cap \xi(x)$ such that $\mu_x(\{y\})>0$. Since $\mu_x=\mu_y$, because $\xi(x)=\xi(y)$,
one gets that
$$
\mu_y(B_\cF(y,n,\vep)) \ge \mu_y(\{y\}) > 0
\quad\text{for any $\vep>0$ and $n \geq 1$.}
$$
In view of Proposition~\ref{p.calculation}, this implies that $h_\mu(f,\cF)=H(\xi \mid f\xi)=0$.

It remains to prove that (a) implies (b). By the relation \eqref{eq.hg}, the assumption
$H(f^{-1}\xi \mid \xi) = h_\mu(f,\xi)=0$ implies that $g(x)=0$ for $\mu$-almost every $x$.
In other words, $\mu_x(f^{-1}\xi(x))=1$ for a full $\mu$-measure subset $A_1$ of values of $x$.
Replacing $f$ by $f^n$ and using the relation (Rokhlin~\cite[~\S 7.2]{Rok67a})
$$
H_\mu(f^{-n}\xi|\xi)=nH_\mu(f^{-1}\xi|\xi)
$$
we conclude that for any $n\ge 1$ there exists a full $\mu$-measure set $A_n$ such that
$\mu_x(f^{-n}\xi(x))=1$ for every $x \in A_n$. Now, our assumptions ensure that the diameter of
$f^{-n}\xi(x)$ decreases uniformly to $0$. Thus, for a full $\mu$-measure set $A_\infty=\cap_{n\ge 1} A_n$
of values of $x$, the measure $\mu_x$ is supported on the point $x$ itself: $\mu_x = \delta_x$.
In particular, $A_\infty \cap \xi(x) = \{x\}$ for every $x\in A_\infty$.

Finally, consider the full $\mu$-measure invariant set $A=\cap_{n\ge 0} f^{n} A_\infty$.
Using property (D) above, we get from the previous paragraph that $A \cap \cF(x) = \{x\}$
for every $x\in A$.
\end{proof}

\begin{remark}\label{r.measurablepartition}
It follows from Proposition~\ref{p.atomic} that if $h_\mu(f,\cF)=0$ then the leaves of $\cF$
define a measurable partition of $M$, with respect to $\mu$. Let us also observe that
$h_\mu(f)=0$ implies $h_\mu(f,\cF)=0$ for every expanding foliation $\cF$. Thus, for example,
if $f:M\to M$ is Anosov then its unstable (or stable) leaves form a measurable partition
with respect to any invariant measure with zero entropy.

It is well-known that such measures fill-in a generic subset of the space of all invariant
probability measures $\mu$. One way to see this is to recall that $\mu \mapsto h_\mu(f)$ is
upper semi-continuous (because $f$ is expansive) and every invariant measure is approximated
by measures supported on periodic orbits (by the Anosov closing lemma). These two observations
imply that  $\{\mu: h_\mu(f) < 1/k\}$ is open and dense, for any $k\ge 1$, and the claim
follows immediately.
\end{remark}

\begin{proposition}\label{p.continuous}
Let $\{\mu_x: x\in M\}$ be the disintegration of $\mu$ with respect to any measurable partition
$\xi$ $\mu$-subordinate to $\cF$. The following conditions are equivalent:
\begin{itemize}
\item[(a)] $h_\mu(f,\cF)>0$;
\item[(b)] for $\mu$-almost every point $x$, the measure $\mu_x$ is continuous,
           that is, it has no atoms.
\end{itemize}
Moreover, any of these conditions implies that any full $\mu$-measure subset $Z$ intersects
almost every leaf of $\cF$ on an uncountable set.
\end{proposition}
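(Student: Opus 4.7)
The plan is to deduce both implications from Propositions \ref{p.calculation} and \ref{p.atomic}, and to read off the moreover statement as a direct corollary of (b).

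For $(b)\Rightarrow(a)$, I would argue contrapositively. If $h_\mu(f,\cF)=0$, Proposition \ref{p.atomic} supplies a full $\mu$-measure set $A$ with $A\cap\cF(x)=\{x\}$ for every $x\in A$. Intersecting with $\xi(x)\subset\cF(x)$ and using $\mu_x(A\cap\xi(x))=\mu_x(\xi(x))=1$, this forces $\mu_x=\delta_x$ for $\mu$-almost every $x$, so $\mu_x$ is purely atomic, contradicting (b).

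For $(a)\Rightarrow(b)$, I would argue by contradiction, imitating the atom-chasing step in the proof of Proposition \ref{p.atomic}. Suppose a positive-measure set of $x$ carries $\mu_x$ with a nontrivial atomic part, and define
\[
B=\{y\in M: \mu_{\xi(y)}(\{y\})>0\}.
\]
Since $B\cap\xi(x)$ is exactly the atomic support of $\mu_x$ and $\mu_x$ is carried by $\xi(x)$, the disintegration identity $\mu(B)=\int \mu_x(B\cap\xi(x))\,d\mu(x)$ yields $\mu(B)>0$. Intersect $B$ with the full-measure set provided by Proposition \ref{p.calculation} and choose any point $y$ in the intersection. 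Because $\mu_y=\mu_{\xi(y)}$ has an atom at $y$,
\[
\mu_y(B_\cF(y,n,\vep)) \ge \mu_y(\{y\})>0 \quad\text{for all } n\ge 1,\ \vep>0,
\]
whence $\overline{h}_\mu(y,\vep,\xi)=0$. Proposition \ref{p.calculation} then forces $h_\mu(f,\cF)=H_\mu(\xi\mid f\xi)=0$, contradicting (a).

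For the moreover statement, let $Z$ be any full $\mu$-measure subset. Disintegration yields $\mu_x(Z)=1$ for $\mu$-almost every $x$. By (b), the conditional $\mu_x$ is atomless, and an atomless probability measure cannot be supported on a countable set; hence $Z\cap\xi(x)$, and therefore $Z\cap\cF(x)$, is uncountable. The one delicate point to verify carefully is the measurability of the map $y\mapsto \mu_{\xi(y)}(\{y\})$ needed to make sense of $B$ and justify the Fubini-type identity for $\mu(B)$; once this is settled, the proof is a clean combination of Propositions \ref{p.calculation} and \ref{p.atomic}.
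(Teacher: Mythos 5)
Your proposal is correct and follows essentially the same route as the paper: both implications are derived from Propositions~\ref{p.calculation} and~\ref{p.atomic}, and the ``moreover'' part is read off from (b) exactly as in the paper. The only divergence is in $(a)\Rightarrow(b)$: you argue by contradiction through the set $B=\{y:\mu_{\xi(y)}(\{y\})>0\}$, which is what forces you to address the measurability of $y\mapsto\mu_{\xi(y)}(\{y\})$ and the disintegration identity for $B$ --- precisely the point you flag as delicate. The paper avoids this issue entirely by reversing the order of the argument: let $A$ be the full $\mu$-measure set on which the conclusion of Proposition~\ref{p.calculation} holds, so that $\liminf_n -\frac1n\log\mu_x\big(B_\cF(x,n,\vep)\big)>0$ and hence $\mu_x(\{x\})=0$ for every $x\in A$; since $\mu_y(A)=1$ for $\mu$-almost every $y$, any atom $z$ of $\mu_y$ would have to lie in $A\cap\xi(y)$, and then $\mu_y(\{z\})=\mu_z(\{z\})=0$ because $\mu_z=\mu_y$ whenever $\xi(z)=\xi(y)$ --- a contradiction. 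This dispenses with any measurability requirement on $B$ and closes the one unfinished step in your write-up; everything else (the contrapositive via Proposition~\ref{p.atomic} for $(b)\Rightarrow(a)$, and the atomless-conditional argument for the uncountability of $Z\cap\cF(x)$) matches the paper's proof.
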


\begin{proof}
The fact that (b) implies (a) is a direct consequence of Proposition~\ref{p.atomic},
so let us prove that (a) implies (b). By Proposition~\ref{p.calculation}, there is a
full $\mu$-measure subset $A$ of values of $x$ for which the conditional measure $\mu_x$
is well defined and
$$
\liminf_{n\to \infty} -\frac{1}{n}\log \mu_x\big(B_\cF(x,n,\vep))>0.
$$
Clearly, the latter implies that $\mu_x(\{x\})=0$ for $x\in A$.
Since $\mu_y(A)=1$ for $\mu$-almost every $y$ and $\mu_x=\mu_y$ whenever $\xi(x)=\xi(y)$,
this proves that $\mu_y$ is continuous for $\mu$-almost every $y$, as claimed.

Given any full $\mu$-measure subset $Z$, let $Z_1$ be the subset of points $x \in Z$ such
that $\mu_x$ is a continuous measure. Condition (b) ensures that $Z_1$ has full $\mu$-measure.
Then, by the definition of a disintegration, $\mu_x(Z_1)=1$ for every $x$ in some full
$\mu$-measure set $Z_2 \subset Z_1$. Since $\mu_x$ is continuous and $\mu_x(\xi(x))=1$,
this implies that $Z_1 \cap \xi(x)$ is uncountable for every $x\in Z_2$. In particular,
$Z \cap \cF(x)$ is uncountable for every $x \in Z_2$.
\end{proof}

\subsection{Main proposition\label{ss.mainproposition}}

Now we focus on the case when the dynamics is partially hyperbolic. More precisely,
take $f:M\to M$ be a $C^1$ diffeomorphism admitting an invariant decomposition
$TM = E^c \oplus E^{wu} \oplus E^{uu}$ into three continuous sub-bundles such that
\begin{itemize}
\item[(i)] $\dim E^{wu} = \dim E^{uu} = 1$ and
\item[(ii)] both $Df \mid E^{wu}$ and $Df\mid E^{uu}$ are uniform expansions and
\item[(iii)] $E^{wu}$ dominates $E^c$ and $E^{uu}$ dominates $E^{wu}$.
\end{itemize}

It is a classical fact (see Hirsch, Pugh, Shub~\cite{HPS77}) that the sub-bundles $E^{uu}$ and
$E^{u} = E^{wu} \oplus E^{uu}$ are \emph{uniquely integrable}: there exist unique foliations $\cF^{uu}$
and $\cF^{u}$, respectively, whose leaves are $C^1$ and tangent to these sub-bundles at every point.
Property (ii) implies that these foliations are expanding. Moreover, $\cF^{uu}$ sub-foliates $\cF^u$.
We also assume:
\begin{itemize}
\item[(iv)] there exists some invariant \emph{weak-unstable} foliation $\cF^{wu}$ with $C^1$ leaves
that are tangent to $E^{wu}$ at every point.
\end{itemize}
Again, such a foliation is necessarily expanding. Moreover, it sub-foliates $\cF^u$.

We say that $\cF^{wu}$ is \emph{uniformly Lipschitz on leaves of $\cF^u$} if there exists $K>0$ such
that the $\cF^{wu}$-holonomy map between any two segments transverse to $\cF^{wu}$ within distance $1$
from each other inside any leaf of $\cF^u$ is $K$-Lipschitz. The main technical result in this paper is:

\begin{proposition}\label{p.newentropyformula}
Suppose that $\cF^{wu}$ is uniformly Lipschitz on leaves of $\cF^u$. Then,
\begin{equation}\label{eq.maininequality}
h_\mu(f,\cF^{u})-h_\mu(f,\cF^{wu}) \leq \tau^{uu},
\end{equation}
where $\tau^{uu}$ is the largest Lyapunov exponent of $f$ with respect to $\mu$ (corresponding to the
the invariant sub-bundle $E^{uu}$).
\end{proposition}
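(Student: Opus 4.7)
The plan is to apply Proposition~\ref{p.calculation} to both foliations and compare the Bowen-ball estimates for $\cF^u$ with those for $\cF^{wu}$, the difference being controlled by a transverse factor of size $e^{-n\tau^{uu}}$. Concretely, the goal is to show that for $\mu$-a.e.\ $x$ and all small $\vep>0$,
$$
\limsup_{n\to\infty} -\frac{1}{n}\log \mu^u_x\big(B_{\cF^u}(x,n,\vep)\big) \leq \limsup_{n\to\infty} -\frac{1}{n}\log \mu^{wu}_x\big(B_{\cF^{wu}}(x,n,C\vep)\big) + \tau^{uu},
$$
where $\mu^u_x$ and $\mu^{wu}_x$ are the conditionals on $\mu$-subordinate partitions $\xi^u$ and $\xi^{wu}$ for $\cF^u$ and $\cF^{wu}$, respectively. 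Letting $\vep\to 0$ and invoking Proposition~\ref{p.calculation} then gives \eqref{eq.maininequality}.

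First I would build the partitions compatibly: apply the procedure of Remark~\ref{r.partition} to obtain $\xi^u$ $\mu$-subordinate to $\cF^u$, then refine its atoms into $\cF^{wu}$-plaques to obtain $\xi^{wu}$ $\mu$-subordinate to $\cF^{wu}$. Since $\cF^{wu}$ sub-foliates $\cF^u$, Rokhlin disintegration yields
$$
\mu^u_x = \int_{T(x)} \mu^{wu}_t \, d\nu_x(t),
$$
where $T(x) \subset \xi^u(x)$ is a local transversal to $\cF^{wu}$ and $\nu_x$ is a probability measure on the one-dimensional set $T(x)$. The uniform Lipschitz hypothesis on $\cF^{wu}$ inside $\cF^u$-leaves then provides a bi-Lipschitz identification of $\xi^u(x)$ with a product of $T(x)$ and a $\cF^{wu}$-plaque, and ensures that $\mu^{wu}_t$-masses of Bowen balls can be compared across different $\cF^{wu}$-plaques with a uniform multiplicative constant.

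The key geometric input is the product-type inclusion
$$
B_{\cF^u}(x,n,\vep) \subset \bigcup_{t \in I_n(x)} B_{\cF^{wu}}\big(y_t, n, C\vep\big),
$$
where $I_n(x) \subset T(x)$ is an interval of radius $\sim \vep\, e^{-n\tau^{uu}}$ centered at the transverse coordinate of $x$, and $y_t$ is the point on the $\cF^{wu}$-plaque over $t$ closest to $x$. This holds because, by Oseledets applied to the invariant sub-bundle $E^{uu}$ together with the non-invariance correction afforded by uniform Lipschitz holonomy, the transverse coordinate of any orbit in $B_{\cF^u}(x,n,\vep)$ cannot have drifted by more than $\vep\, e^{-n\tau^{uu}}$ from that of $x$. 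Integrating the disintegration and applying the holonomy comparison gives
$$
\mu^u_x\big(B_{\cF^u}(x,n,\vep)\big) \leq C\,\nu_x\big(I_n(x)\big)\cdot \mu^{wu}_x\big(B_{\cF^{wu}}(x,n,C\vep)\big).
$$

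Taking $-\frac{1}{n}\log$ and passing to $\limsup$, the $\mu^{wu}_x$-factor produces $h_\mu(f,\cF^{wu})$ via Proposition~\ref{p.calculation}, and the task reduces to showing that $\limsup_n -\frac{1}{n}\log \nu_x(I_n(x)) \leq \tau^{uu}$ for $\mu$-a.e.\ $x$. Since $\nu_x$ is a probability measure on the one-dimensional transversal $T(x)$, its upper pointwise dimension is at most $1$ at $\nu_x$-a.e.\ point, which yields $-\log \nu_x(I_n(x)) \leq (1+o(1))\, n\tau^{uu}$ as $n \to \infty$. The principal obstacle is making the product-structure inclusion precise $\mu$-almost surely, since the transverse direction to $\cF^{wu}$ inside $\cF^u$ is not $Df$-invariant: the uniform Lipschitz assumption is exactly what lets us transfer the expansion rate of $E^{uu}$ to the transverse coordinate, and a subsidiary technical point is a Fubini argument ensuring that the upper-pointwise-dimension bound on $\nu_x$ can be applied at the specific transverse coordinate of $x$ for $\mu$-a.e.\ $x$.
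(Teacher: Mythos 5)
There is a genuine gap here --- in fact two. First, the logic points the wrong way. To prove $h_\mu(f,\cF^{u})\le h_\mu(f,\cF^{wu})+\tau^{uu}$ you must bound $-\frac1n\log \mu^u_x\big(B_{\cF^u}(x,n,\vep)\big)$ from \emph{above}, i.e.\ you need a \emph{lower} bound on $\mu^u_x\big(B_{\cF^u}(x,n,\vep)\big)$ of product type, roughly $\mu^u_x(B_{\cF^u}(x,n,\vep))\ge c\,\nu_x(I_n(x))\,\mu^{wu}_x(B_{\cF^{wu}}(x,n,c\vep))$. Your key inequality is an \emph{upper} bound, $\mu^u_x(B_{\cF^u})\le C\,\nu_x(I_n)\,\mu^{wu}_x(B_{\cF^{wu}})$, coming from a covering inclusion; taking $-\frac1n\log$ of it only yields $h_\mu(f,\cF^{u})\ge h_\mu(f,\cF^{wu})+\liminf_n\big(-\frac1n\log\nu_x(I_n(x))\big)$, which is the opposite of the stated goal. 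So even granting every estimate you claim, the chain of deductions does not produce \eqref{eq.maininequality}.

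Second, and more fundamentally, the step where you pass from the disintegration $\mu^u_x=\int \mu^{wu}_t\,d\nu_x(t)$ to a product bound rests on the assertion that the Lipschitz holonomy lets you compare $\mu^{wu}_t$-masses of Bowen balls across different $\cF^{wu}$-plaques with a uniform multiplicative constant. The Lipschitz hypothesis controls only the geometry of the leaves; it says nothing about how the conditional measures $\mu^{wu}_t$ transform under holonomy, and uniform comparability of $\mu^{wu}_t\big(B_{\cF^{wu}}(y_t,n,C\vep)\big)$ for $t$ in a window shrinking with $n$ amounts to a local product structure (transverse quasi-invariance of the disintegration) that is not available --- it is precisely the sort of absolute-continuity property whose failure this paper is about. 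The paper's proof avoids any such comparison: it builds nested subordinate partitions $\xi^{wu}\prec\xi^u$ with the compatibility of Lemma~\ref{l.productstructure}, defines a transverse metric by projecting along $\cF^{wu}$ onto a line close to $E^{uu}$ and pulling back to return times to a Pliss-type block (Proposition~\ref{p.Pliss}), and estimates the $\mu^u_x$-measure of transverse tubes by a telescoping product along the orbit; the ratios are handled by the maximal-function Lemma~\ref{l.LY} ($g_\delta\to g$ almost everywhere, with $\int-\log g_*\,d\mu<\infty$), and Birkhoff averages of $-\log g$ and of $I(z)=-\log\mu^u_z\big(f^{-1}\xi^u(z)\big)$ produce exactly $h_\mu(f,\cF^{wu})$ and $h_\mu(f,\cF^{u})$; only then does the one-dimensionality of the transversal (Lemma~\ref{l.density}) convert the estimate into \eqref{eq.maininequality}. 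Your outline shares the skeleton (transverse coordinate along $\cF^{uu}$, dimension at most one, rate $\tau^{uu}$), but the measure-theoretic engine that replaces the pointwise comparison of conditionals is missing, and as proposed that step would fail.
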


Ledrappier and Young have a similar statement (\cite[Theorem C']{LY85b}) where the roles of
$\cF^{uu}$ and $\cF^{wu}$ are exchanged and the diffeomorphism is assumed to be $C^2$
($C^{1+\epsilon}$ suffices, by Barreira, Pesin, Schmeling~\cite{BPS99}).
In their setting, the lamination $\cF^{uu}$ is automatically Lipschitz inside $\cF^{u}$.
That is not true, in general, for $\cF^{wu}$.

While we were writing this paper, Fran\c cois Ledrappier pointed out to us that a similar result was
obtained by Jian-Sheng Xie~\cite[equation (2.26)]{Xie14} in the context of linear toral
automorphisms. His result would be sufficient for our purposes, but our methods extend to
non-linear maps, and so they should be useful in more generality.

The arguments that follow are essentially borrowed from \cite{LY85b}.
They can be adapted to yield a version of Proposition~\ref{p.newentropyformula} where the sub-bundle
$E^{wu}$ is assumed to be non-uniformly hyperbolic, and to admit a tangent lamination $\cF^{wu}$
satisfying a Lipschitz condition. We do not state it explicitly because it will not be necessary
for our purposes. The following observation shows that, at least in this non-uniformly hyperbolic
setting, the Lipschitz condition can not be omitted:

\begin{remark}
Shub and Wilkinson~\cite{SW00} dealt with $\C^2$ volume-preserving perturbations of a skew-product map
$$
g \times \id: \TT^2\times S^1 \to \TT^2 \times S^1,
$$
where $g$ is a linear Anosov map on the 2-dimensional torus. The perturbation
$f$ is a partially hyperbolic, volume-preserving diffeomorphism with an invariant circle bundle and
whose center Lyapunov exponent $\tau^c$ is positive. The entropy formula (for partial entropy) gives
that  $h_\mu(f,\cF^{u})$ is equal to the sum $\tau^{uu}+\tau^{wu}$ of the two positive Lyapunov
exponents. On the other hand, Ruelle-Wilkinson~\cite{RW01} showed that every center leaf contains
finitely many $\mu$-generic points. Thus, $h_\mu(f,\cF^{wu})=0$ and so \eqref{eq.maininequality} fails
in this case.
\end{remark}

The proof of Proposition~\ref{p.newentropyformula} is given in Subsection~\ref{ss.newentropyformula}.
It is clear that the weak-unstable foliation of a linear Anosov diffeomorphisms is well defined and
uniformly Lipschitz inside leaves of the unstable foliation $\cF^u$. Thus Theorem~\ref{t.entropyestimation}
is an immediate corollary of Proposition~\ref{p.continuous} and Proposition~\ref{p.newentropyformula}.

\subsection{Auxiliary lemmas}

In the section we quote several lemmas from \cite{LY85b} that will be used in the proof of
Proposition~\ref{p.newentropyformula}.

\begin{lemma}[\cite{LY85b}, Lemma 4.1.3]\label{l.LY}
Let ($X,\nu$) be a Lebesgue space, $\pi: X\to \real^n$ be a measurable map, and
$\{\nu_t : t\in \real^n\}$ be a disintegration of $\nu$ with respect to the partition
$\{\pi^{-1}(t): t\in\real^n\}$. Let $\alpha$ be a countable partition of $X$ with
$H_\nu(\alpha)<\infty$. Define $g(x)=\sum_{A\in \alpha} \chi_A(x)g^A(\pi (x))$,
$$
g_\delta(x)=\sum_{A\in \alpha} \chi_A(x)g^A_\delta(\pi(x))
\text{ and }
g_*(x)=\sum_{A\in \alpha}\chi_A(x)g^A_*(\pi(x)),
$$
where $g^A(t)=\nu_t(A)$ for each $A\in\alpha$ and $t\in\real^n$,
$$
g^A_\delta(x)=\frac{1}{\nu(B_\delta^T(x))}\int_{B_\delta^T(x)} g^A \, d\nu
\text{ and }
g^A_*(x)=\inf_{\delta>0}g^A_{\delta}(x),
$$
where $B_\delta^T(x) = \pi^{-1}(B_\delta(\pi(x)))$. Then $g_\delta\to g$ almost everywhere on $X$ and
$$
\int -\log g_* d\nu \leq H_\nu(\alpha)+\log c +1
$$
where $c=c(n)$ is the constant that comes from Besicovitch covering lemma.
\end{lemma}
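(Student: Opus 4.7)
The plan is to prove the two claims separately: the pointwise convergence $g_\delta \to g$ is a differentiation statement, while the integral bound is a maximal-function estimate in disguise. Both rest on the Besicovitch covering lemma in $\real^n$, applied to the pushforward measure $\mu:=\pi_*\nu$, which is a Borel probability measure on $\real^n$. The key observation that sets everything up is that $g^A_\delta(x)$ depends only on $\pi(x)$, since by disintegration $\nu(A\cap B_\delta^T(x)) = \int_{B_\delta(\pi(x))} g^A\,d\mu$ and $\nu(B_\delta^T(x)) = \mu(B_\delta(\pi(x)))$; thus $g^A_\delta(x)$ equals the $\mu$-average of $g^A$ over the ball $B_\delta(\pi(x))\subset\real^n$.

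For the convergence, I would invoke the Lebesgue--Besicovitch differentiation theorem for Radon measures on $\real^n$: since $0\le g^A\le 1$ lies in $L^1(\mu)$, the averages $\mu(B_\delta(t))^{-1}\int_{B_\delta(t)}g^A\,d\mu$ converge to $g^A(t)$ for $\mu$-a.e.\ $t$. Summing over the countable partition $\alpha$ (and using that $\alpha$ covers $X$ so exactly one term is active at each point) yields $g_\delta(x)\to g(x)$ for $\nu$-a.e.\ $x$.

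For the entropy-type inequality, I would first rewrite the integral against the pushforward. Since $\pi_*(\nu|_A)$ has density $g^A$ with respect to $\mu$,
\[
\int -\log g_*\,d\nu \;=\; \sum_{A\in\alpha}\int (-\log g^A_*)\, g^A\, d\mu.
\]
For each $\lambda>0$, set $E^A_\lambda = \{t\in\real^n : g^A_*(t)<\lambda\}$. Every $t\in E^A_\lambda$ admits a radius $\delta(t)$ with $g^A_{\delta(t)}(t)<\lambda$; apply the Besicovitch covering lemma to $\{B_{\delta(t)}(t)\}_{t\in E^A_\lambda}$ to extract a countable subcover of overlap at most $c=c(n)$. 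Then
\[
\int_{E^A_\lambda} g^A\,d\mu \;\le\; \sum_i \int_{B_i} g^A\,d\mu \;<\; \lambda\sum_i \mu(B_i) \;\le\; c\lambda,
\]
while trivially $\int_{E^A_\lambda} g^A\,d\mu \le \nu(A)$. The layer-cake formula
\[
\int (-\log g^A_*)\, g^A\,d\mu \;=\; \int_0^\infty \int_{E^A_{e^{-s}}} g^A\,d\mu\,ds
\]
splits at $s_A := \log(c/\nu(A))$: on $[0,s_A]$ use the bound $\nu(A)$, on $[s_A,\infty)$ use the bound $ce^{-s}$. The two pieces sum to $\nu(A)\log c - \nu(A)\log\nu(A) + \nu(A)$. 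Summing over $A\in\alpha$ gives
\[
\int -\log g_*\,d\nu \;\le\; \log c + H_\nu(\alpha) + 1,
\]
which is the claim.

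The main technical delicacy I expect is the Besicovitch step: one needs $\delta(t)$ to be chosen measurably (or to pass through an approximate infimum and then let the approximation error vanish), and one must verify that the Besicovitch covering lemma applies for the Radon measure $\mu$ on $\real^n$ regardless of its support (the version one wants is the one where the constant $c(n)$ depends only on the dimension, not on the measure). Everything else is bookkeeping: the elementary but slightly delicate optimization that produces the additive constant $1$, and the verification that $g^A_*\le g^A\le 1$ so that $-\log g^A_*\ge 0$ and the layer-cake identity is justified.
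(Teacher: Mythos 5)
The paper itself gives no proof of this lemma --- it is quoted directly from Ledrappier--Young \cite[Lemma~4.1.3]{LY85b} --- and your argument is correct and essentially reproduces the original one: the identification of $g^A_\delta$ with $\mu$-averages of $g^A$ over Euclidean balls for the pushforward $\mu=\pi_*\nu$, the Besicovitch differentiation theorem for the almost everywhere convergence, and the weak-type estimate $\int_{E^A_\lambda} g^A\,d\mu\le\min\bigl(\nu(A),c\lambda\bigr)$ combined with the layer-cake split at $\lambda=\nu(A)/c$, which yields exactly $\nu(A)\log c-\nu(A)\log\nu(A)+\nu(A)$ and sums to $H_\nu(\alpha)+\log c+1$. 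The technical point you flag in the Besicovitch step is not a genuine gap: restrict to witnessing radii $\delta(t)\le M$, apply the covering lemma (which needs only a family of balls centered at the points of $E^A_\lambda$, not a measurable selection), and let $M\to\infty$ by monotone convergence.
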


\begin{lemma}[\cite{LY85b}, Lemma 4.1.4]\label{l.density}
Let $\omega$ be a finite Borel measure on $\real^n$. Then
$$
\limsup_{\vep\to 0} \frac{\log \omega(B_\vep(x))}{\log\vep}\leq n.
$$
\end{lemma}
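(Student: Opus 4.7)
The statement is a classical bound on the upper pointwise dimension of a finite Borel measure; it is to be understood as holding for $\omega$-almost every $x\in\RR^n$ (outside $\supp\omega$ the left-hand side equals $+\infty$, so some almost-everywhere qualifier is needed). The whole argument rests on a single application of the Besicovitch covering theorem already invoked in Lemma~\ref{l.LY}.

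Since $\log\vep<0$ for small $\vep$, the condition $\limsup_{\vep\to 0}\log\omega(B_\vep(x))/\log\vep > n+\alpha$ is equivalent to the existence of arbitrarily small $\vep>0$ with $\omega(B_\vep(x)) \le \vep^{n+\alpha}$. Setting
$$
E_\alpha = \{\, x\in\RR^n : \omega(B_\vep(x)) \le \vep^{n+\alpha}\text{ for arbitrarily small } \vep>0 \,\},
$$
it is enough to prove $\omega(E_\alpha)=0$ for every $\alpha>0$, since the exceptional set in the lemma is contained in the countable union $\bigcup_{k\ge 1} E_{1/k}$.

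Fix $\alpha>0$, a bounded set $K\subset\RR^n$, and $\delta\in(0,1)$. For each $x\in K\cap E_\alpha$ choose $\vep(x)\in(0,\delta)$ with $\omega(B_{\vep(x)}(x)) \le \vep(x)^{n+\alpha}$, and apply the Besicovitch covering theorem to the family $\{B_{\vep(x)}(x)\}_{x\in K\cap E_\alpha}$ to extract a countable subfamily $\{B_{\vep(x_i)}(x_i)\}_i$ that still covers $K\cap E_\alpha$ and whose multiplicity is bounded by a dimensional constant $c_n$. Since $\vep(x_i)<\delta<1$, every ball in the subfamily lies in the $1$-neighborhood $K'$ of $K$. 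The multiplicity bound together with the comparison of ball volume with Lebesgue measure gives
$$
\sum_i \vep(x_i)^n \;\le\; \frac{c_n\,\Leb(K')}{\Leb(B_1)} \;=:\; C(K),
$$
independently of $\delta$. Therefore
$$
\omega(K\cap E_\alpha) \;\le\; \sum_i \omega(B_{\vep(x_i)}(x_i)) \;\le\; \sum_i \vep(x_i)^{n+\alpha} \;\le\; \delta^\alpha \sum_i \vep(x_i)^n \;\le\; C(K)\,\delta^\alpha.
$$
Letting $\delta\to 0$ yields $\omega(K\cap E_\alpha)=0$, and exhausting $\RR^n$ by bounded $K$ finishes the proof.

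There is no substantial obstacle here: the argument is purely measure-theoretic on $\RR^n$. The two points deserving a moment's attention are the sign of $\log\vep$ in passing from the limsup statement to the ball-measure inequality defining $E_\alpha$, and the uniform-in-$\delta$ control $\sum_i\vep(x_i)^n\le C(K)$, which works precisely because all chosen radii are at most $\delta<1$ and hence the selected balls are trapped in a fixed neighborhood of $K$.
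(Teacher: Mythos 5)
Your proof is correct, and the paper itself offers no argument for this statement: it is simply quoted from Ledrappier--Young (Lemma 4.1.4), where the proof is the same standard Besicovitch covering argument you give, including the point that the conclusion must be read for $\omega$-almost every $x$. Your two cautionary remarks (the sign of $\log\vep$ and the uniform bound $\sum_i\vep(x_i)^n\le C(K)$ coming from trapping the balls in a fixed neighborhood of $K$) are exactly the right ones, so there is nothing to add.
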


\subsection{Proof of Proposition~\ref{p.newentropyformula}}\label{ss.newentropyformula}

We are going to prove that, given any $\beta>0$,
\begin{equation}\label{eq.mainbeta}
\tau^{uu}+\beta\geq (1-\beta)[h_\mu(f,\cF^{u})-h_\mu(f,\cF^{wu})-2\beta].
\end{equation}
Proposition~\ref{p.newentropyformula} follows by making $\beta$ go to zero.
Let $\beta>0$ be fixed from now on.
The first step is to construct two suitable $\mu$-subordinate partitions, $\xi^{u}$
and $\xi^{wu}$, for foliations $\cF^{u}$ and $\cF^{wu}$, respectively.

Let $\cA$ be a finite partition with arbitrarily small diameter and whose
elements have small boundary in the sense of \eqref{eq.small_boundary}.
Denote by $\cA^u$ and $\cA^{wu}$ the refinements of $\cA$ defined by
$$
\cA^u(x) = \cF^u_{loc}(x) \cap \cA(x)
\quand
\cA^{wu}(x) = \cF^{wu}_{loc}(x) \cap \cA(x).
$$
Arguing as in Remark~\ref{r.partition}, we see that
$$
\xi^u=\bigvee_{n\geq 0} f^n \cA^u \quand \xi^{wu}=\bigvee_{n\geq 0} f^n(\cA^{wu})
$$
are measurable partitions $\mu$-subordinate to $\cF^u$ and $\cF^{wu}$ respectively.

The next lemma states that $\xi^{wu}$ refines $\xi^{u}$ and the quotient $\xi^{u}/\xi^{wu}$
is preserved by the dynamics:

\begin{lemma}\label{l.productstructure}
Take the diameter of $\cA$ to be sufficiently small.
Then for $\mu$-almost every $x, y \in M$ with $y\in \xi^u(x)$,
\begin{itemize}
\item[(a)] $\xi^u(x)\cap \cF^{wu}_{loc}(y)=\xi^{wu}(y)$ and
\item[(b)] $f(\xi^{wu}(f^{-1}(y)))\cap \xi^u(x)=\xi^{wu}(y)$.
\end{itemize}
\end{lemma}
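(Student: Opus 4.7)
The plan is to reduce both identities to membership statements about backward orbits by unfolding the definitions $\xi^u(w)=\bigcap_{n\geq 0}f^n(\cA^u(f^{-n}(w)))$ and $\xi^{wu}(w)=\bigcap_{n\geq 0}f^n(\cA^{wu}(f^{-n}(w)))$. Three geometric inputs drive the argument: the sub-foliation of $\cF^u$ by $\cF^{wu}$, the uniform backward contraction of leaves of $\cF^{wu}$, and the choice of $\diam\cA$ small enough (relative to a uniform size of local $\cF^u$-plaques) that each plaque of $\cF^u$ meeting a given $\cA$-atom decomposes trivially into a product of $\cF^{wu}$-plaques by a transverse segment. Since $y\in\xi^u(x)$, I may work with $\xi^u(y)=\xi^u(x)$ and with $\cA(f^{-n}(x))=\cA(f^{-n}(y))$ for every $n\ge 0$.

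For part (a), the inclusion $\xi^{wu}(y)\subseteq\xi^u(x)\cap\cF^{wu}_{\loc}(y)$ is immediate from $\cA^{wu}\prec\cA^u$ together with $\cF^{wu}\subseteq\cF^u$. Conversely, taking $z\in\xi^u(x)\cap\cF^{wu}_{\loc}(y)$, the plan is to verify that $f^{-n}(z)\in\cA^{wu}(f^{-n}(y))$ for every $n\ge 0$: membership in $\cA(f^{-n}(y))$ is given by $z\in\xi^u(x)$, and membership in $\cF^{wu}_{\loc}(f^{-n}(y))$ follows because $f^{-n}$ uniformly contracts distances along $\cF^{wu}$, so the initial bound $z\in\cF^{wu}_{\loc}(y)$ only tightens under backward iteration.

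For part (b), the inclusion $\xi^{wu}(y)\subseteq f(\xi^{wu}(f^{-1}(y)))\cap\xi^u(x)$ follows from (a) together with $\xi^{wu}$ being an increasing partition. For the converse, a point $z$ in the right-hand side satisfies $f^{-k}(z)\in\cA^{wu}(f^{-k}(y))$ for every $k\ge 1$ by definition of $f\xi^{wu}$, so the only missing constraint is $z\in\cA^{wu}(y)=\cA(y)\cap\cF^{wu}_{\loc}(y)$. The half $z\in\cA(y)$ is immediate from $z\in\xi^u(x)\subseteq\cA(x)=\cA(y)$, and $z\in\cF^{wu}(y)$ comes from $z=f(f^{-1}(z))$ with $f^{-1}(z)\in\cF^{wu}_{\loc}(f^{-1}(y))$. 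Combining $z\in\cF^{wu}(y)$ with $z\in\xi^u(x)\subseteq\cF^u_{\loc}(y)$ and invoking the trivial sub-foliation of $\cF^u_{\loc}(y)$ by $\cF^{wu}$ places $z$ in the plaque of $y$, which is $\cF^{wu}_{\loc}(y)$.

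The main obstacle is precisely this last localization step. A priori, $f(\xi^{wu}(f^{-1}(y)))$ can overflow $\cF^{wu}_{\loc}(y)$ since $f$ expands $\cF^{wu}$, and a single leaf of $\cF^{wu}$ may return to a given $\cA$-atom along several disconnected pieces. The intersection with $\xi^u(x)$ in (b) is what prevents this overflow, by confining $z$ to $\cF^u_{\loc}(y)$ where the sub-foliation by $\cF^{wu}$ is a product and only the connected plaque through $y$ survives. Securing this uniformly is what forces the hypothesis that the diameter of $\cA$ be chosen sufficiently small.
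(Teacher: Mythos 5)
There is a genuine gap, and it sits exactly where your argument for the converse inclusion in (a) is quickest. You deduce $f^{-n}(z)\in\cA^{wu}(f^{-n}(y))$ for every $n\ge 0$ from two facts: same $\cA$-atom (which does follow from $z\in\xi^u(x)$) and leafwise proximity (backward iteration contracts $\cF^{wu}$-distance). But the elements of $\cA^{wu}$ are not sets of the form ``atom intersected with a leafwise ball around the point''; they are intersections of atoms with \emph{local plaques} of $\cF^{wu}$, and two points that are leafwise close and lie in the same atom may still lie in different plaques --- this is precisely the ``several disconnected pieces'' phenomenon you yourself flag in part (b), and it affects every backward iterate in (a) as well. (Note that ``atom $\cap$ leafwise ball centered at the point'' would not even define a partition, so proximity alone cannot characterize membership in $\cA^{wu}(f^{-n}(y))$.) The paper bridges exactly this gap: property (D) of the subordinate partition $\xi^{wu}$ gives that $f^{-n}(z)\in\xi^{wu}(f^{-n}(y))$ for all large $n$ (anchoring the claim: for those $n$ the two points are genuinely in the same $\cA^{wu}$-element), and then a downward induction on $n$, using the small diameter of $\cA$ and continuity of $f$ to guarantee $f(\cA^{wu}(f^{-n}(y)))\subset\cF^{wu}_{\loc}(f^{-n+1}(y))$, propagates ``same $\cA^{wu}$-element'' to every $n\ge 0$. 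Your proposal has no substitute for this anchoring-plus-induction step, so the key assertion $f^{-n}(z)\in\cA^{wu}(f^{-n}(y))$ is unsupported.

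The same issue resurfaces in your treatment of (b). You correctly identify the danger of overflow, but the resolution you offer --- that intersecting with $\xi^u(x)$ and invoking the product structure of $\cF^u_{\loc}$ leaves ``only the connected plaque through $y$'' --- does not rule out the weak-unstable leaf of $y$ re-entering the small unstable plaque $\xi^u(x)$ along a distant plaque. What actually kills this possibility (and what the paper uses) is a diameter bound, not the product structure: $\xi^{wu}(f^{-1}(y))$ has uniformly small leafwise diameter by property (A), so its image under $f$ (uniformly Lipschitz) is still small and hence contained in $\cF^{wu}_{\loc}(y)$; once $f(\xi^{wu}(f^{-1}(y)))\subset\cF^{wu}_{\loc}(y)$ is in hand, the inclusion $\subset$ in (b) follows at once from part (a), and the reverse inclusion from the increasing property, exactly as in the paper. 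So the skeleton of your (b) can be repaired, but only after (a) is proved correctly and after replacing the localization-by-product-structure step with the small-diameter argument.
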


\begin{proof}
The relation $\supset$ in (a) is clear from the definitions. To prove the converse,
let $y, z\in \xi^u(x)$ be such that $z\in \cF^{wu}_{loc}(y)$.
By the definition of $\xi^u$, the backward iterates $f^{-n}(y)$ and $f^{-n}(z)$
belong to the same element of $\cA^{u}$ and consequently to the same element of $\cA$.
By property (D) applied to the partition $\xi^{wu}$, we have that
$f^{-n}(z)\in\xi^{wu}(f^{-n}(y))$ for every large $n$.
In particular, $f^{-n}(y)$ and $f^{-n}(z)$ belong to the same element of $\cA^{wu}$
for every large $n$. Choose any such $n$. Since $\cA$ is assumed to have small diameter,
$\cA^{wu}(y_{-n})=\cA^{wu}(z_{-n})$ also has small diameter inside the corresponding
$\cF^{wu}$-leaf. Then, by continuity, $f(\cA^{wu}(f^{-n}(y)))$ is contained in
$\cF^{wu}_{loc}(f^{-n+1}(y))$. This proves that
$$
f^{-n+1}(z) \in \cF^{wu}_{loc}(f^{-n+1}(y)) \cap \cA(f^{-n+1}(y))=\cA^{wu}(f^{-n+1}(y)).
$$
By (backwards) induction, this proves that $f^{-n}(y)$ and $f^{-n}(z)$
belong to the same element of $\cA^{wu}$ for every $n$. Thus $\xi^{wu}(y)=\xi^{wu}(z)$,
as we wanted to prove. The proof of part (a) is complete.

From $\xi^{wu}(f^{-1}(y))\subset \cF^{wu}_{loc}(f^{-1}(y))$ we immediately get that
$f(\xi^{wu}(f^{-1}(y)))\subset \cF^{wu}_{loc}(y)$. Combining this with part (a),
we find that
$$
f(\xi^{wu}(f^{-1}(y)))\cap \xi^u(x)\subset \xi^{wu}(y).
$$
This proves the relation $\subset$ in part (b) of the lemma.
To prove the converse, observe that $\xi^{wu}(y) \subset \xi^{u}(x)$, by definition, and
$f(\xi^{wu}(f^{-1}(y)))\supset \xi^{wu}(y)$ because the partition $\xi^{wu}$ is increasing.
\end{proof}

It follows that one may identify each quotient $\xi^{u}(x)/\xi^{wu}$ with a subset of the
local strong-unstable leaf $\cF^{uu}(x)$. Indeed, define
$$
\pi^{wu}_x:\xi^u(x) \to \cF^{uu}_{loc}(x),
\quad
\pi^{wu}_x(y) = \text{ the unique point in } \cF^{wu}_{loc}(y) \cap \cF^{uu}_{loc}(x).
$$
It is clear that this map is constant on every element of $\xi^{wu}$, and part (a) of
Lemma~\ref{l.productstructure} ensures that it is injective.
Thus it induces an injective map from $\xi^{u}(x)/\xi^{wu}$ to $\cF^{uu}_{loc}(x)$.
Using this latter map, we may transport the Riemannian distance on $\cF^{uu}_{loc}(x)$
to a distance $d_x$ on the quotient space $\xi^{u}(x)/\xi^{wu}$.

In what follows, we define yet another distance on $\xi^{u}(x)/\xi^{wu}$ which we are going
to denote as $\tilde{d}$ and which has the advantage of being independent of $x$.
For this, we need a kind of Pesin block construction, which is contained in the next proposition.

\begin{proposition}\label{p.Pliss}
For any $\vep>0$, there is a positive measure subset $\Lambda_\vep$ such that, for any
$x\in \Lambda_\vep$ and any $n>0$,
$$
\frac{1}{n}\log\|Df^n \mid {E^{wu}_x}\|\leq \tau^{wu}+\vep.
$$
\end{proposition}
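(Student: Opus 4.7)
The plan is to apply a classical Pliss-lemma maximum argument to the one-dimensional cocycle $\phi(x)=\log\|Df\mid E^{wu}_x\|$. Since $\dim E^{wu}=1$, we have the exact additive identity
$$
\log\|Df^n\mid E^{wu}_x\|=\sum_{k=0}^{n-1}\phi(f^k(x)),
$$
and ergodicity of $\mu$ gives $\int\phi\,d\mu=\tau^{wu}$ by definition of the Lyapunov exponent along $E^{wu}$.

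First I would set $\psi=\phi-(\tau^{wu}+\vep)$ and $S_n(x)=\sum_{k=0}^{n-1}\psi(f^k(x))$, so that $\int\psi\,d\mu=-\vep<0$ and the desired conclusion becomes simply $S_n(x)\le 0$ for every $n\ge 1$. By the Birkhoff ergodic theorem, $S_n(x)/n\to-\vep$ for $\mu$-a.e.\ $x$; in particular $S_n(x)\to-\infty$, and therefore the supremum $M(x)=\sup_{n\ge 0}S_n(x)$ is finite and attained at some finite $n^\ast(x)\ge 0$.

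Then I would define
$$
\Lambda_\vep=\{\,y\in M: S_n(y)\le 0\text{ for every }n\ge 1\,\},
$$
which is exactly the set the proposition asks for. The key observation is that $y=f^{n^\ast(x)}(x)$ lies in $\Lambda_\vep$ for $\mu$-a.e.\ $x$: by the cocycle identity,
$$
S_n(y)=S_{n+n^\ast(x)}(x)-S_{n^\ast(x)}(x)\le 0
$$
by maximality of $n^\ast(x)$. Consequently, $\mu$-almost every $x$ has some forward iterate inside $\Lambda_\vep$, so $M=\bigcup_{k\ge 0}f^{-k}(\Lambda_\vep)$ modulo a null set.

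Finally, $f$-invariance of $\mu$ forces
$$
1=\mu(M)\le\sum_{k\ge 0}\mu(f^{-k}(\Lambda_\vep))=\sum_{k\ge 0}\mu(\Lambda_\vep),
$$
so $\mu(\Lambda_\vep)>0$. I expect no real obstacles here: the only substantive ingredient beyond Birkhoff's theorem and measure-preservation is the exact cocycle identity, and that is available precisely because $E^{wu}$ is one-dimensional.
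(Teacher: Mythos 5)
Your proof is correct and follows essentially the same route as the paper: the set $\Lambda_\vep$ is defined identically, one shows the forward orbit of $\mu$-almost every point enters it, and invariance of $\mu$ then gives $\mu(\Lambda_\vep)>0$. The only difference is cosmetic: the paper invokes Oseledets together with a quoted Pliss-type lemma to produce the hitting time, whereas you inline the standard proof of that lemma via the finite argmax of the Birkhoff sums $S_n$ (legitimate here precisely because $\dim E^{wu}=1$ makes the cocycle identity exact), so your argument is self-contained but not substantively different.
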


The arguments are very classical, except for the fact that here the diffeomorphism is only
assumed to be $C^1$, so we just outline the proof of the proposition.
A similar construction appeared in~\cite{Yang07}. Define
$$
\Lambda_\vep=\{x: \frac{1}{n}\log\|Df^n \mid {E^{wu}_x}\| \leq \tau^{wu}+\vep \text{ for every $n > 0$} \}.
$$
Then $\Lambda_\vep$ is a compact set, possibly empty. To prove that $\mu(\Lambda_\vep)>0$ it suffices
to show that the forward orbit $\Orb^+(x)$ of $\mu$-almost every $x$ intersects $\Lambda_\vep$.

By the theorem of Oseledets, for $\mu$-almost every $x$ there exists $n(x)\geq 1$ such that
$$
\frac{1}{n}\log\|Df^n \mid {E^{wu}_x}\|\leq \tau^{wu}+\frac{\vep}{2}
\text{ for every $n\geq n(x)$.}
$$
We also need the following variation of the Pliss lemma (see \cite[Lemma~11.5]{Beyond}):

\begin{lemma}\label{l.Pliss}
Given $K>0$ and $\tau < \bar\tau$ and any sequence $\{a_n\}_{n=1}^\infty$ such that
$\|a_n\|<K$ for every $n\ge 1$ and
$$
\limsup_{n\to \infty} \frac{1}{n} \sum_{j=1}^n a_j <\tau,
$$
there exists $n_0>0$ such that
$$
\frac{1}{m}\sum_{j=1}^m a_{n_0+j}<\bar\tau \text{ for any } m\in \mathbb{N}.
$$
\end{lemma}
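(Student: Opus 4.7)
The plan is a proof by contradiction via a greedy construction. I would assume the conclusion fails, so that for every integer $n_0\ge 0$ there exists some $m=m(n_0)\ge 1$ with
\[
\sum_{j=1}^{m} a_{n_0+j} \;\ge\; m\,\bar\tau.
\]
Starting from $n_0=0$, I would define a sequence recursively by $n_{k+1}=n_k+m(n_k)$. Since $m(n_k)\ge 1$ at each step, this produces a strictly increasing sequence of integers with $n_k\to\infty$.

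Telescoping the resulting block sums gives
\[
\sum_{j=1}^{n_k} a_j \;=\; \sum_{i=0}^{k-1}\sum_{j=n_i+1}^{n_{i+1}} a_j \;\ge\; \sum_{i=0}^{k-1}(n_{i+1}-n_i)\,\bar\tau \;=\; n_k\,\bar\tau,
\]
so that along the subsequence $(n_k)$ one obtains
\[
\limsup_{n\to\infty}\frac{1}{n}\sum_{j=1}^n a_j \;\ge\; \bar\tau \;>\; \tau,
\]
which contradicts the hypothesis.

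The argument is essentially bookkeeping and no step is really an obstacle; what one has to check is only that each choice $m(n_k)$ is legitimate (precisely the negated conclusion applied at $n_0=n_k$) and that $n_k\to\infty$ (automatic because $m\ge 1$). I would remark, finally, that the uniform bound $\|a_n\|<K$ plays no role in this bare-existence form of the statement; it becomes essential only for the quantitative version of the Pliss lemma which asserts that the set of admissible $n_0$ has positive upper density, a strengthening that will be relevant when constructing Pesin-type blocks $\Lambda_\vep$ in Proposition~\ref{p.Pliss} but is not needed here.
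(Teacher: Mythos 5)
Your argument is correct, and in fact the paper offers no proof of this lemma at all: it is stated as a known variation of the Pliss lemma with a pointer to \cite[Lemma~11.5]{Beyond}, so your self-contained contradiction argument is a perfectly good (and more elementary) substitute for that citation. The greedy block construction, the telescoping estimate $\sum_{j=1}^{n_k} a_j \ge n_k\bar\tau$, and the resulting contradiction with $\limsup_n \frac1n\sum_{j=1}^n a_j < \tau < \bar\tau$ are exactly the standard route, and your closing remark is also accurate: the uniform bound $\|a_n\|<K$ is not needed for this bare-existence statement, only for the quantitative (positive-density) Pliss lemma. One cosmetic slip: the negation of the conclusion only gives you a bad $m(n_0)$ for every $n_0\ge 1$, not for $n_0=0$, so you should start the recursion at $n_1=1$ rather than $n_0=0$; the stray first term then contributes $a_1/n_k\to 0$, so the subsequence of averages still has $\liminf$ at least $\bar\tau$ and the contradiction goes through unchanged.
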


Take $K=\sup_{x\in M}\{\|Df(x)\|\}$, $\tau=\tau^{wu}+\vep/2$ and $\bar\tau=\tau^{wu}+\vep$,
and define $a_n=\|Df\mid {E^{wu}_{f^n(x)}}\|$ for $n\ge 1$. From Lemma~\ref{l.Pliss} we get that there is $n(x)>0$ such that
$$
\frac{1}{m} \log \|Df^m \mid {E^{wu}_{f^{n(x)}(x)}}\| \leq \tau^{wu}+\vep \text{ for any } m\geq 1.
$$
Thus $f^{n(x)}(x)\in \Lambda_\vep$, which implies the claim that $\Orb^+(x)$ intersects $\Lambda_\vep$.
This completes our outline of the proof of Proposition~\ref{p.Pliss}.

From now on, let $\Lambda=\Lambda_{\beta/3}$. Fix $r_0>0$ such that for any $x,y\in M$
\begin{equation}\label{eq.rzero}
d(x,y)<r_0 \text{ implies } \|\log Df\mid {E^{wu}_{x}}-\log Df\mid {E^{wu}_{y}}\|\leq \beta/3.
\end{equation}
Assume that the diameter of $\cA$ is smaller than $r_0$. Then the same is true for $\cA^u$ and $\cA^{wu}$,
and so $\xi^u$ and $\xi^{wu}$ also have diameter less than $r_0$.

Fix $x_0 \in \supp(\mu \mid \Lambda)$, that is, such that every neighborhood of $x$ intersects $\Lambda$
on a positive measure subset. Let $D \ni x$ be a small codimension-$1$ disk transverse to $\cF^{wu}$.
Consider local smooth coordinates $(x_1, x_2, \dots, x_{d-1})$ be local coordinates on $D$ such that the
$x_1$-axis is close to the direction of $E^{uu}$. Let $B$ be the union of the local $\cF^{wu}$-leaves
through points of $D$ and $\tilde\pi$, from $B$ to the $x_1$-axis to be the composition of the projection
$B\to D$ along $\cF^{wu}$-leaves with the projection to the $x_1$-axis associated with the chosen coordinates.

\begin{remark}\label{r.uniformLipschtez}
The projections along the local coordinates are smooth maps, of course. Recall that $\cF^{u}$ is
$2$-dimensional and is sub-foliated by $\cF^{uu}$ and by $\cF^{uu}$. Since we assume that the
weak-unstable foliation $\cF^{wu}$ is uniformly Lipschitz inside each leaf of $\cF^{u}$,
we get that $\tilde\pi$ is uniformly bi-Lipschitz restricted to each leaf of $\cF^{uu}$ inside $B$.
Let $\tilde{K}$ be a uniform Lipschitz constant.
\end{remark}

It is no restriction to suppose that $B_{2r_0}(x_0)\subset B$. Define $\Lambda_0=\Lambda \cap B_{r_0}(x_0)$.
By further reducing $r_0>0$ if necessary, we may assume that
\begin{equation}\label{eq.smallmeasure}
e^{-\beta(\tau^{wu}+\beta)} \tilde{K}^{4\mu(\Lambda_0)}<1.
\end{equation}

We can extend the projection $\tilde{\pi}$ from the domain $B$ to the union of $\xi^u(x)$ over all
$x\in \cup_{n\geq 0} f^n(\Lambda_0)$, as follows. Given such an $x$, let $n$ be the smallest nonnegative
integer such that $f^{-n}(x)\in \Lambda_0$. Since $\xi^u$ is increasing and has small diameter,
$f^{-n}(y)\in \xi^u(f^{-n}(x))\subset B_{r_0}(\Lambda_0)\subset B$ for any $y\in \xi^u(x)$.
Just define
$$
\tilde{\pi}(y)=\tilde{\pi}(f^{-n}(y)).
$$
Keep in mind that $\cup_{n\geq 0} f^n(\Lambda_0)$ has full $\mu$-measure, by ergodicity.
Now we are ready to introduce the announced transverse distance $\tilde{d}$:
for $x\in \cup_{n\geq 0} f^n(\Lambda_0)$, and $y_1, y_2\in \xi^u(x)$, define
\begin{equation}\label{eq.dtilde}
\tilde{d}(y_1,y_2)=|\tilde{\pi}(y_1) - \tilde{\pi}(y_2)|.
\end{equation}
By Lemma~\ref{l.productstructure}(a), this function $\tilde{d}(\cdot, \cdot)$ induces a distance
on the quotient space $\xi^u(x)/\xi^{wu}$ which is independent of $x$.

Let $\{\mu^{u}_x: x \in M\}$ and $\{\mu^{wu}_x: x \in M\}$ be the disintegrations of $\mu$ with respect to the partitions
$\xi^u$ and $\xi^{wu}$, respectively. For $\mu$-almost every $x$, consider the disk
$B^T_\rho(x)=\{y\in \xi^u(x): \tilde{d}(x,y)<\rho\}$. We are going to prove that
\begin{equation}\label{eq.dimensioninequality}
(\tau^{wu}+\beta)\limsup_{\rho\to 0} \frac{\log \mu^{u}_x (B^T_\rho (x))}{\log \rho}
\geq (1-\beta) [h_\mu(f,\xi^u)-h_\mu(f,\xi^{wu})-2\beta].
\end{equation}

Our definitions are such that $\mu^{u}_x (B^T_\rho (x))$ coincides with the (projection) measure
of an Euclidean ball of radius $\rho$ in the $x_1$-axis. Since the latter is $1$-dimensional,
the $\limsup$ on the left-hand side is smaller than or equal to $1$ (compare Lemma~\ref{l.density}).
Recalling also the definition of partial entropy, we immediately conclude that
\eqref{eq.dimensioninequality} implies \eqref{eq.mainbeta}.
Thus we have reduced the proof of Proposition~\ref{p.newentropyformula} to proving \eqref{eq.dimensioninequality}.

The rest of the argument is based on Lemma~\ref{l.LY}. Define $g, g_*, g_\delta: M\to \real$ by
$$
\begin{aligned}
g(y)=\mu_y^{wu}((f^{-1}\xi^{u})(y)) \text{ and } & g_*(y)=\inf_{\delta>0} g_\delta(y) \text{ with}\\
g_\delta(y)=\frac{1}{\mu^{u}_y(B^T_\delta(y))}\int_{B^T_\delta(y)} g(y) \, d\mu^u_y(dz)
&=\frac{\mu^u_y((f^{-1}\xi^u)(y)\cap B^T_\delta(y))}{\mu^{u}_y(B^T_\delta(y))}
\end{aligned}
$$
(the last identity is a consequence of the definition of disintegration). It follows from Lemma~\ref{l.LY}
that
\begin{equation}\label{eq.deltagdelta}
g_\delta\to g \text{ at $\mu$-almost everywhere and } \int-\log g_* d\mu <\infty.
\end{equation}
To see this, just fix $x$, substitute $(\xi^{u}(x),\mu^u_x)$ for $(X,\nu)$, let $\tilde{\pi}$
be the projection from $\xi^u(x)$ to the $x_1$-axis introduced previously,
and take $\alpha=f^{-1}\xi^u| \xi^u(x)$; finally, integrate with respect to $\mu$.

By Poincar\'e recurrence, for $\mu$-almost every $x\in \Lambda_0$ one can find times
$0=n_0 < n_1 < \dots < n_j < \cdots <n$ such that $f^{n_j}(x) \in \Lambda_0$ for any $j \geq 0$.
For each $0 \leq k < n$, take $j\ge 0$ largest such that $n_j \le k$ and then define
$$
a(x,n,k)=B^T_{\delta(x,n,k)}(f^{k}x)
\text{ with }
\delta(x,n,k)=e^{-(n-n_j)(\tau^{wu}+\beta)}\tilde{K}^{2j}.
$$
Note that $\delta(x,n,k)=\delta(x,n,n_j)$ for every $k\in\{n_j, \dots, n_{j+1}-1\}$.
This will be used for proving the following invariance property:

\begin{lemma}\label{l.transversecontracting}
$a(x,n,k)\cap (f^{-1}\xi^u)(f^k (x))\subset f^{-1}(a(x,n,k+1))$ for every $x\in \Lambda_0$.
\end{lemma}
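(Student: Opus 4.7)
I would split the argument based on whether $k+1$ equals a Pliss return time $n_{j+1}$ or lies strictly inside an interval $[n_j,n_{j+1})$.

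In the \emph{inside-the-interval} case $k,k+1\in[n_j,n_{j+1})$, one has $\delta(x,n,k+1)=\delta(x,n,k)$. At $f^{k+1}x$ the extended projection $\tilde\pi$ is obtained by pulling back one extra step of $f^{-1}$ to the common base point $f^{n_j}(x)\in\Lambda_0$, compared with at $f^kx$; this extra backward iterate exactly cancels the forward iterate $y\mapsto f(y)$. Hence $\tilde\pi(f(y))=\tilde\pi(y)$ and $\tilde\pi(f^{k+1}x)=\tilde\pi(f^kx)$, which gives $\tilde d(f(y),f^{k+1}x)=\tilde d(y,f^kx)<\delta(x,n,k)=\delta(x,n,k+1)$, as required.

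The substantive case is the \emph{Pliss transition} $k+1=n_{j+1}$. Writing $m=n_{j+1}-n_j$, $z_0=f^{n_j}(x)\in\Lambda_0$ and $w=f^{-(m-1)}(y)\in\xi^u(z_0)\subset B$, the pullback definition of $\tilde\pi$ and the fact that $f^{n_{j+1}}(x)\in\Lambda_0$ is its own nearest past visit yield $\tilde d(y,f^kx)=|\tilde\pi_{\mathrm{loc}}(w)-\tilde\pi_{\mathrm{loc}}(z_0)|$ and $\tilde d(f(y),f^{k+1}x)=|\tilde\pi_{\mathrm{loc}}(f^mw)-\tilde\pi_{\mathrm{loc}}(f^mz_0)|$. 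Since $\delta(x,n,k+1)/\delta(x,n,k)=\tilde K^2 e^{m(\tau^{wu}+\beta)}$, the lemma reduces to the expansion estimate
\[
|\tilde\pi_{\mathrm{loc}}(f^mw)-\tilde\pi_{\mathrm{loc}}(f^mz_0)|\le\tilde K^2 e^{m(\tau^{wu}+\beta)}\,|\tilde\pi_{\mathrm{loc}}(w)-\tilde\pi_{\mathrm{loc}}(z_0)|.
\]
I would prove this in three steps. First, since $\tilde\pi_{\mathrm{loc}}$ is constant along local $\cF^{wu}$-leaves, slide $w$ along its $\cF^{wu}$-leaf onto a point $v_0$ lying on the distinguished transversal through $z_0$ provided by Remark~\ref{r.uniformLipschtez}, so that $\tilde\pi_{\mathrm{loc}}(w)=\tilde\pi_{\mathrm{loc}}(v_0)$; by that remark $\tilde\pi_{\mathrm{loc}}$ is $\tilde K$-bi-Lipschitz on this transversal at both $z_0$ and $f^mz_0$, contributing an overall factor $\tilde K^2$ and reducing matters to the arc-length expansion $d(f^mv_0,f^mz_0)/d(v_0,z_0)$ of $f^m$ along the transversal. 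Second, using $f$-invariance of the sub-bundles and the assumed uniform Lipschitz property of $\cF^{wu}$ inside $\cF^u$-leaves, bound this arc-length expansion by $\|Df^m\mid E^{wu}_{z_0}\|$, which Proposition~\ref{p.Pliss} bounds by $e^{m(\tau^{wu}+\beta/3)}$ since $z_0\in\Lambda$. Third, upgrade this pointwise Pliss bound to a uniform bound along the entire $m$-step orbit arc via the continuity estimate~\eqref{eq.rzero}: each intermediate iterate $f^i(w)$ stays within $r_0$ of $f^i(z_0)$ because $\cA$---hence $\xi^u$---has diameter less than $r_0$ and $\xi^u$ is increasing, which costs only an extra factor $e^{m\beta/3}$. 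Multiplying gives the desired bound $\tilde K^2 e^{m(\tau^{wu}+2\beta/3)}\le\tilde K^2 e^{m(\tau^{wu}+\beta)}$.

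The main obstacle, I expect, is the second step: linking the transverse arc-length expansion to $\|Df^m\mid E^{wu}\|$ is precisely where the uniform $\cF^{wu}$-Lipschitz hypothesis is essential, as the Shub--Wilkinson example discussed after the proposition shows that the conclusion~\eqref{eq.maininequality}---and hence presumably the present invariance property---can fail once this hypothesis is dropped.
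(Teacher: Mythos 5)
Your two-case decomposition and all the surrounding bookkeeping coincide with the paper's proof: between Pliss returns the pull-back definition of $\tilde\pi$ makes $\tilde d$ literally invariant and the radii agree (the paper records this via Lemma~\ref{l.productstructure}(b), which also keeps track of the fact that $a(x,n,k)$ is a union of $\xi^{wu}$-elements inside $\xi^u(f^k x)$), and at a return time one converts $\tilde d$ to arc length on the transversal and back, paying the factor $\tilde{K}^2$ from Remark~\ref{r.uniformLipschtez}, and controls the expansion over the block by the Pliss property at $z_0\in\Lambda_0$ together with \eqref{eq.rzero} and $\diam \xi^u<r_0$; your steps 1 and 3 are exactly this.

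The genuine gap is your step 2, and it is not a technicality that more care would remove: the inequality you reduce to is false as written. After step 1 the quantity being iterated is the length of an arc inside a leaf of $\cF^{uu}$ (the transversal to $\cF^{wu}$ inside $\cF^u$), and $f^m$ expands such arcs at the rate of $\|Df^m \mid E^{uu}\|$; since $E^{uu}$ dominates $E^{wu}$, this is at least $\|Df^m\mid E^{wu}_{z_0}\|$, so the domination goes the wrong way, and the Lipschitz property of $\cF^{wu}$ inside $\cF^u$-leaves only compares transverse distances measured on different transversals (that is where $\tilde{K}^2$ comes from); it cannot trade strong-unstable for weak-unstable expansion. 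Already for the linear automorphism $A$ the transverse separation of two $\cF^{wu}$-plaques in an unstable plane is multiplied by exactly $\lambda_3^{m}$ over the block, while your bound allows only $\tilde{K}^2\lambda_2^m e^{m\beta}$, so your displayed ``expansion estimate'' fails whenever $m=n_{j+1}-n_j$ is large; for the same reason, with radii $\delta(x,n,k)=e^{-(n-n_j)(\tau^{wu}+\beta)}\tilde{K}^{2j}$ the inclusion asserted in the lemma itself fails for $A$ at long return times. What the argument actually needs --- and what the statement of Proposition~\ref{p.newentropyformula} forces, since \eqref{eq.dimensioninequality} with $\tau^{wu}$ on the left would give $h_\mu(f,\cF^u)-h_\mu(f,\cF^{wu})\le\tau^{wu}$, already false for $A$ and volume --- is to run Proposition~\ref{p.Pliss}, the choice of $r_0$ in \eqref{eq.rzero}, and the radii $\delta(x,n,k)$ with $E^{uu}$ and $\tau^{uu}$ in place of $E^{wu}$ and $\tau^{wu}$. (The paper's own proof of the lemma cites Proposition~\ref{p.Pliss} with the same ``$wu$'' labels; this is evidently a notational slip, and reproducing it does not produce a proof.) With that replacement your three steps become correct and coincide with the paper's argument: one factor $\tilde{K}$ to pass to $\cF^{uu}$-arc length at $z_0$, the bound $e^{m(\tau^{uu}+2\beta/3)}$ for $\|Df^m\mid E^{uu}\|$ along the arc via the Pliss property at $z_0$ and \eqref{eq.rzero} (the arc stays $r_0$-close to the orbit of $z_0$ because $\xi^u$ is increasing and $\diam\xi^u<r_0$), one more factor $\tilde{K}$ to return to $\tilde d$, compared with $\delta(x,n,k+1)/\delta(x,n,k)=\tilde{K}^{2}e^{m(\tau^{uu}+\beta)}$.
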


\begin{proof}
Suppose first that $k \neq n_{j+1}-1$. Note that $a(x,n,k)$ consists of elements $\xi^{wu}(y)$ of
the weak-unstable partition and, of course, the same is true for $a(x,n,k+1)$.
For each one of them, Lemma~\ref{l.productstructure}(b) ensures that
$$
f(\xi^{wu}(y)) \cap \xi^{u}(f^{k+1}(x)) = \xi^{wu}(f(y))
$$
for any $y\in a(x,n,k)\cap (f^{-1}\xi^u)(f^k (x))$.
The definition \eqref{eq.dtilde} ensures that $\tilde{d}(y,f^k(x)) = \tilde{d}(f(y),f^{k+1}(x))$.
Besides, as observed before, the transverse diameters $\delta(x,n,k)$ and $\delta(x,n,k+1)$
are the same in the present case. In this way we get that
$$
f(a(x,n,k))\cap \xi^{u}(f^{k+1}(x))=a(x,n,k+1),
$$
as we wanted to prove.

From now on, suppose that $k=n_{j+1}-1$. While the transverse diameters are no longer necessary the
same for $k$ and $k+1$, all we have to do is that it is still true that
\begin{equation}\label{eq.anotherdtilde}
\tilde{d}(f(y),f^{k+1}(x)) \le \delta(x,n,k+1)
\end{equation}
for any $y\in a(x,n,k)\cap (f^{-1}\xi^u)(f^k (x))$. By definition,
$$
\tilde{d}(y,f^{k}(x))\leq e^{-(n-n_{j})(\tau^{wu}+\beta)} \tilde{K}^{2j} .
$$
According to Remark~\ref{r.uniformLipschtez}, this implies that
$$
d_{f^{n_{j}}(x)}(f^{n_{j}-n_{j+1}+1}(y),f^{n_{j}}(x))
\leq \tilde{K}^{2j+1} e^{-(n-n_{j})(\tau^{wu}+\beta)}.
$$
Since $f^{n_{j}}(x) \in \Lambda_0 \subset \Lambda_{\beta/3}$ and $\diam(\xi^u)<r_0$,
Proposition~\ref{p.Pliss} together with our choice of $r_0$ ensure that
$$
d_{f^{n_{j+1}}(x)}(f(y),f^{n_{j+1}}(x))
\leq \tilde{K}^{2j+1} e^{-(n-n_{j+1})(\tau^{wu}+\beta)}.
$$
Using Remark~\ref{r.uniformLipschtez} once again, it follows that
$$
\tilde{d}(f(y),f^{n_{j+1}}(x))
\leq \tilde{K}^{2j+2} e^{-(n-n_{j+1}) (\tau^{wu}+\beta)}.
$$
This means that $f(y)\in a(x,n,k+1)$, as we wanted to prove.
\end{proof}

Now let us estimate the measure $\mu^{u}_x(a(x,n,0))$ for $x\in \Lambda_0$. Clearly,
\begin{equation}\label{eq.xnzero}
\frac{\mu^u_x (a(x,n,0))}{\mu^u_{f^{n}(x)}(a(x,n,p(n)))} =  \prod_{k=0}^{p(n)-1} \frac{\mu^u_{f^{k}(x)}(a(x,n,k))}{\mu^u_{f^{(k+1)}x}(a(x,n,k+1))},
\end{equation}
where $p(n)=[n(1-\beta)]$. For each $0\leq k \leq p(n)-1$ and $\mu$-almost every $x\in\Lambda_0$,
$$
\frac{\mu^u_{f^{k}(x)}(a(x,n,k))}{\mu^u_{f^{(k+1)}(x)}(a(x,n,k+1))}
= \mu^u_{f^{k}(x)}(a(x,n,k))\frac{\mu^u_{f^{k}(x)}(f^{-1}(\xi^u(f^{(k+1)}(x))))} {\mu^u_{f^{k}(x)}(f^{-1}(a(x,n,k+1)))}
$$
(use the essential uniqueness of the disintegration together with the fact that $\xi^u$ is an
increasing partition). By Lemma~\ref{l.transversecontracting}, the right-hand side is bounded above by
$$
\frac{\mu^u_{f^{k}(x)}(a(x,n,k))}{\mu^u_{f^{k}(x)}(f^{-1}\xi^u(f^{k}(x))\cap a(x,n,k))}\mu^u_{f^{k}(x)}(f^{-1}\xi^u(f^{k}(x))).
$$

The quotient on the left-hand side is precisely $1/g_{\delta(x,n,k)}(f^k(x))$.
Write the last factor as $e^{-I(f^{k}(x))}$, where $I(z)=-\log \mu^u_z(f^{-1}\xi^u(z))$.
Replacing these expressions in \eqref{eq.xnzero}, we get that
$$
\begin{aligned}
\log \mu^u_x (B^T_{e^{-n(\tau^{wu}+\beta)}}(x))
& = \log \mu^u_x (a(x,n,0))
\le \log \frac{\mu^u_x (a(x,n,0))}{\mu^u_{f^{n}(x)}(a(x,n,p))} \\
& \leq -\sum_{k=0}^{p(n)-1}\log g_{\delta(x,n,k)}(f^{k}(x))-\sum_{k=0}^{p(n)-1}I(f^{k}(x)).
\end{aligned}
$$
Consequently,
$$
\begin{aligned}
(\tau^{wu}+\beta)\limsup_{\rho\to 0} & \frac{\log \mu^u_x (B^T(x,\rho))}{\log\rho}
\ge (\tau^{wu}+\beta) \liminf_{n\to \infty} \frac{\log \mu^u_x (B^T_{e^{-n(\tau^{wu}+\beta)}}(x))}{\log e^{-n(\tau^{wu}+\beta)}}\\
& \ge \liminf_{n\to\infty} \big[\frac{1}{n}\sum_{k=0}^{p(n)-1}\log g_{\delta(x,n,k)}(f^{k}(x))+\frac{1}{n}\sum_{i=0}^{p(n)-1}I(f^{k}(x))\big].
\end{aligned}
$$
By the Birkhoff ergodic theorem and the definition of conditional entropy
$$
\lim_{p \to\infty} \frac{1}{p}\sum_{i=0}^{p}I(f^{k}(x)))
= \int I \, d\mu = H_\mu(f^{-1}\xi^u \mid \xi^u).
$$
Therefore, using also the definition of partial entropy of an expanding foliation,
\begin{equation}\label{eq.2ndlimit}
\lim_{n \to\infty} \frac{1}{n}\sum_{i=0}^{p(n)}I(f^{k}(x)))
= (1-\beta) h_\mu(f,\cF^u).
\end{equation}

We are left to prove that
\begin{equation}\label{eq.1stlimit}
\limsup_{n\to\infty} -\frac{1}{n} \sum_{k=0}^{p(n)} \log g_{\delta(x,n,k)}(f^{k}x)\leq (1-\beta)(h_\mu(f,\cF^{wu})+2\beta).
\end{equation}
By \eqref{eq.deltagdelta}, we may find a measurable function $x\mapsto \delta(x)$ such that
$$
-\log g_{\delta}(x)\leq -\log g(x)+\beta
\text{ for every } \delta<\delta(x)
$$
and a constant $\delta_0>0$ such that
$$
\int_{\{x: \delta(x) \le \delta_0\}} -\log g_*(x)\, d\mu(x)<\beta.
$$

By ergodicity, for $\mu$-almost all $x$ we have $\#\{0 \leq i < n: f^i(x) \in \Lambda_0\} \leq 2n \mu(\Lambda_0)$
for every large $n$. In particular, we always have $j \le 2n\mu(\Lambda_0)$.
Moreover, $n_j \le k \le p(n)$ implies that $n-n_j \ge \beta n$. Therefore,
$$
\delta(x,n,k)
= e^{-(n-n_j)(\tau^{wu}+\beta)} \tilde{K}^{2j}
\leq e^{-\beta n (\tau^{wu}+\beta)} \tilde{K}^{4n\mu(\Lambda_0)}
$$
for every $0\le k \le p(n)$. By \eqref{eq.smallmeasure}, this implies that $\delta(x,n,k) \to 0$ uniformly in
$0 \le k \le p(n)$ when $n\to\infty$. In particular, $\delta(x,n,k) < \delta_0$ for every $k\leq p(n)$ if $n$
is sufficiently large.

Going back to \eqref{eq.1stlimit}, note that
$$
\begin{aligned}
\sum_{k=0}^{p(n)} -\log g_{\delta(x,n,k)}& (f^{k}x)\\
& \leq \sum_{k: \delta(f^k(x)) > \delta_0} -\log g(f^k(x))+\beta +\sum_{k: \delta(f^k(x)) \le \delta_0} -\log g_*(f^k(x))
\end{aligned}
$$
and, by the Birkhoff ergodic theorem, this leads to
$$
\begin{aligned}
\limsup_n -\frac{1}{n} \sum_{k=0}^{p(n)} \log g_{\delta(x,n,k)}& (f^{k}x)\\
& \leq (1-\beta) \big[ \int-\log g \, d\mu + \beta +\int_{\{x: \delta(x) \le \delta_0\}}-\log g_*d\mu\big]\\
& \leq (1-\beta) \big[ \int-\log g \, d\mu + 2\beta\big].
\end{aligned}
$$
To conclude, note that $g(x)=\mu^{wu}_x (f^{-1}\xi^{wu}(x))$ and so
$$
\int -\log g \, d\mu = H_\mu(f^{-1}\xi^{wu} \mid \xi^{wu}) = h_\mu(f,\cF^{wu}).
$$
This completes the proof of \eqref{eq.dimensioninequality} and thus of Proposition~\ref{p.newentropyformula}.

\section{Semiconjugacy to the linear model}\label{s.semiconjugacy}

Let $f:\TT^3\to\TT^3$ be a $C^1$ diffeomorphism in the isotopy class $\cD(A)$ of a linear automorphism
$A$ as described in the previous section. By Potrie~\cite{Pot15}, such a diffeomorphism is \emph{dynamically
coherent}: there exist invariant foliations $\cF^{cs}$ and $\cF^{cu}$ tangent to the center-stable and
center-unstable sub-bundles, respectively. Intersecting their leaves, one obtains an invariant center
foliation tangent to $E^c$.

By Franks~\cite{Fra70}, there exists a continuous surjective map $\phi:\TT^3\to\TT^3$ that
semiconjugates $f$ to $A$, that is, such that $\phi \circ f = A \circ \phi$. Moreover, by construction,
$\phi$ lifts to a map $\tphi:\RR^3\to\RR^3$ that is at bounded distance from the identity:
there exists $C>0$ such that
\begin{equation}\label{eq.closetoidentity}
\|\tphi(\tilde x)-\tilde x\| \le C
\quad\text{for every $\tilde x\in\RR^3$.}
\end{equation}

\subsection{Geometry of the center foliation}

\begin{proposition}\label{p.semiconjugation}
For all $z\in \TT^3$, the pre-image $\phi^{-1}(z)$ is a compact connected subset of some center leaf of $f$
(that is, either a point or an arc) with uniformly bounded length.
\end{proposition}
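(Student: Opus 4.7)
The plan is to pass to the universal cover $\RR^3$ and systematically exploit the semiconjugacy $\tphi\circ\tf=A\circ\tphi$ together with the bound $\|\tphi-\id\|\le C$ from \eqref{eq.closetoidentity}. Compactness of $\phi^{-1}(z)$ is immediate from continuity of $\phi$ and compactness of $\TT^3$, so the real content lies in showing that the fibre is contained in a single center leaf, is connected, and has uniformly bounded length.

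Fix a lift $\tilde z\in\RR^3$ and take $\tx,\ty\in\tphi^{-1}(\tilde z)$. The identity $\tphi(\tf^n\tx)=A^n\tphi(\tx)=A^n\tphi(\ty)=\tphi(\tf^n\ty)$ combined with \eqref{eq.closetoidentity} yields the fundamental estimate
\[
\|\tf^n(\tx)-\tf^n(\ty)\|\le 2C \quad\text{for every }n\in\ZZ.
\]
To upgrade this to $\ty\in\cF^c(\tx)$ (in the lifted foliation), I would invoke Potrie's global description of the lifted foliations $\cF^{cs}$ and $\cF^{cu}$ for $f\in\cD(A)$: each leaf is a properly embedded surface quasi-isometric to an affine translate of $E_1^A\oplus E_2^A$, respectively $E_2^A\oplus E_3^A$. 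Expressing $\ty-\tx$ in $A$-adapted coordinates, if the $E_3^A$-component is non-zero, then $\ty$ sits off $\cF^{cs}(\tx)$ by a definite amount that is transported via $\tphi\tf^n=A^n\tphi$ into a component of size $\lambda_3^n$, contradicting the $2C$-bound for large $n>0$. Hence $\ty\in\cF^{cs}(\tx)$; the symmetric argument with backward iterates and $\lambda_1^{-n}$-expansion gives $\ty\in\cF^{cu}(\tx)$, so $\ty\in\cF^{cs}(\tx)\cap\cF^{cu}(\tx)=\cF^c(\tx)$.

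For connectedness and the ``point or arc'' dichotomy, parametrize $\tilde L:=\cF^c(\tx)$ by arclength via $\gamma:\RR\to\RR^3$ and identify the $A$-center leaf through $\tilde z$ with $\ell\simeq\RR$. Quasi-isometry of $\tilde L$ with $\ell$ (again from Potrie's results and the global product structure) together with $\|\tphi-\id\|\le C$ imply that $\Psi:=\tphi\circ\gamma:\RR\to\RR$ is proper. I would show $\Psi$ is monotone by contradiction: suppose $s<u<t$ satisfy $\Psi(s)=\Psi(t)\ne\Psi(u)$. The first step, applied to $\gamma(s),\gamma(t)$, gives $\|\tf^n\gamma(s)-\tf^n\gamma(t)\|\le 2C$ for all $n$. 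On the other hand, the equivariance forces $\tphi(\tf^n\gamma(u))-\tphi(\tf^n\gamma(s))$ to be $A^n$ applied to $\Psi(u)-\Psi(s)$ in the $E_2^A$-direction, which has norm $\lambda_2^n|\Psi(u)-\Psi(s)|$. Combined with $\|\tphi-\id\|\le C$, this makes $\tf^n\gamma(u)$ drift to distance $\lambda_2^n|\Psi(u)-\Psi(s)|-O(1)$ from $\tf^n\gamma(s)$ while remaining on the center leaf $\tf^n\tilde L$ whose two distinguished points $\tf^n\gamma(s),\tf^n\gamma(t)$ stay $2C$-close; the uniform quasi-isometry of center leaves rules this out. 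Hence $\Psi$ is monotone and $\Psi^{-1}(\tphi(\tx))$ is a closed interval.

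Finally, the uniform length bound follows from the observation that $\tphi^{-1}(\tilde z)$ has Euclidean diameter at most $2C$ (each of its points lies within $C$ of $\tilde z$) together with the uniform quasi-isometry constants of center leaves for $f\in\cD(A)$, which convert Euclidean diameter into arclength with constants depending only on $f$. The principal obstacle I anticipate is the monotonicity step: a proper continuous self-map of $\RR$ at bounded distance from the identity can a priori oscillate, so ruling this out genuinely uses the expansion $\lambda_2>1$ of $A$ on $\ell$ together with the quasi-isometric geometry of the $\tf$-center leaves in $\RR^3$. The other steps reduce to careful unpackings of $\tphi\tf=A\tphi$, \eqref{eq.closetoidentity}, and the coarse geometry of the invariant foliations supplied by Potrie's theorem.
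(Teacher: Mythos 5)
Your compactness remark, the fundamental estimate $\|\tf^n(\tx)-\tf^n(\ty)\|\le 2C$ for all $n\in\ZZ$, the length bound via quasi-isometry of center leaves, and the monotonicity argument for connectedness are essentially fine; indeed the monotonicity step is a reasonable substitute for the paper's use of the uniform shadowing characterization of $\tphi$ coming from Franks' construction (bounded orbit distance implies equal $\tphi$-image), which is what the paper uses to see that the whole center segment between two fibre points lies in the fibre. Note only that in your contradiction you need not insist that the displacement $\Psi(u)-\Psi(s)$ lies in the $E_2$-direction (that presupposes the later fact that $\tphi$ maps center leaves of $f$ into center leaves of $A$): any nonzero vector $v$ satisfies $\|A^n v\|\to\infty$ either as $n\to+\infty$ or as $n\to-\infty$, while the intrinsic-distance/quasi-isometry bound on $\|\tf^n\gamma(u)-\tf^n\gamma(s)\|$ holds for all $n\in\ZZ$, so the contradiction is available without that extra input.

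The genuine gap is the step claiming $\ty\in\cF^{cs}(\tx)$. The mechanism you propose is vacuous: since $\tphi(\tx)=\tphi(\ty)$, one has $\tphi(\tf^n\tx)=\tphi(\tf^n\ty)$ for every $n$, so there is no nonzero vector on the linear side to be expanded by $\lambda_3^n$; the whole difficulty of the proposition is precisely that $\tphi$ collapses the displacement $\ty-\tx$, so its coordinate components cannot be ``transported'' through the semiconjugacy. Moreover the criterion itself is not correct: a leaf of $\tilde\cF^{cs}_f$ is only within a bounded Hausdorff distance $R$ of a translate of $E^{cs}$, so a nonzero $E_3$-component of $\ty-\tx$ neither implies nor is implied by $\ty\notin\cF^{cs}(\tx)$; hence even if a contradiction were reached, you could not conclude $\ty\in\cF^{cs}(\tx)$ from the vanishing of that component. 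What is missing is a proof that points on \emph{distinct} center (or center-stable) leaves of $\tf$ must separate under forward or backward iteration; the almost-planes description of the lifted foliations alone does not give this. The paper obtains it from the leaf conjugacy $h$ of Hammerlindl--Potrie: distinct lifted center leaves of $A$ separate under $\tilde A^n$ as $n\to+\infty$ or $n\to-\infty$, and since a lifted leaf conjugacy distorts distances by a bounded amount on bounded sets, the same separation holds for distinct leaves of $\tilde\cF^c_f$ under $\tf^n$; combined with your $2C$-estimate this forces $\tx$ and $\ty$ onto a single leaf. One could instead argue via the global product structure of $\tilde\cF^{uu}$ and $\tilde\cF^{cs}$ together with quasi-isometry of strong unstable leaves, but some global input of this kind, beyond ``leaves stay near planes'', is indispensable and is absent from your sketch.
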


This was proven by Ures~\cite[Proposition 3.1]{Ure12}, assuming absolute partial hyperbolicity,
and by Fisher, Potrie, Sambarino~\cite{FPS14}, in the present context. We outline the arguments,
to highlight where the uniform bound comes from.

\begin{proof}[Sketch of proof of Proposition~\ref{p.semiconjugation}:]
Let $\tilde f$ and $\tilde A$ be the lifts of $f$ and $A$, respectively, to the universal cover $\RR^3$
of $\TT^3$. The center foliations $\cF^c_f$ and $\cF^c_A$ also lift to foliations $\tilde\cF^c_f$ and
$\tilde\cF^c_A$ in $\RR^3$ and these are center foliations for $\tilde f$ and $\tilde A$, respectively.
We need the following facts:

\begin{enumerate}
\item[(i)] $\tphi(\tilde x) = \tphi(\tilde y)$ if and only if there exists $K>0$ such that
$\|\tilde f^n(\tilde x) - \tilde f^n(\tilde y)\| < K$ for all $n\in \ZZ$. In fact, $K$
may be chosen independent of $\tilde x$ and $\tilde y$.

\item[(ii)] There exists a homeomorphism $h:\TT^3\to\TT^3$ which maps each center leaf $L$
of $f$ to a center leaf of $A$ so that $h(f(L)) = A(h(L))$ for every $L$.  We say that $h$ is
a leaf conjugacy between $f$ and $A$.

\item[(iii)] The leaves of $\tilde\cF^c_f$ are quasi-isometric in $\RR^3$: there exists $Q>1$ such
that  $d_c(x,y) \le Q \|x-y\| + Q$ for every $x, y$ in the same center leaf, where $d_c$
denotes the distance inside the leaf.
\end{enumerate}

Fact (i) is a  direct consequence of the construction of $\phi$ in Franks~\cite{Fra70},
which is based on the shadowing lemma for the linear automorphism $A$.
Fact (ii) was proven in Hammerlindl, Potrie~\cite[Corollary~1.5]{HaP14}.
See Hammerlindl~\cite[Proposition~2.16]{Ham13} for a proof of fact (iii) in the absolute
partially hyperbolic case and  Hammerlindl, Potrie~\cite[Section~3]{HaP14} for an explanation
on how to extend the conclusion to the present context.

The map $h$ in (ii) lifts to a homeomorphism $\tilde h:\RR^3\to\RR^3$ which is a leaf conjugacy
between $\tilde f$ and $\tilde A$. From a general property of lift maps, we get that
$$
d(\tilde{h}(\tilde{x}_n),\tilde{h}(\tilde{y}_n))\to \infty \quad\Rightarrow\quad  d(\tilde{x}_n,\tilde{y}_n)\to \infty.
$$
It is clear that given any distinct $\tilde\cF^c_A$-leaves $F_1$ and $F_2$ the
distance between $\tilde A^n(F_1)$ and $\tilde A^n(F_2)$ goes to infinity when $n\to+\infty$
or $n\to-\infty$ or both. In view of the previous observation, the same is true for $\tilde f$:
given any distinct $\tilde\cF^c_f$-leaves $L_1$ and $L_2$ the distance between $\tilde f^n(L_1)$
and $\tilde f^n(L_2)$ goes to infinity when $n\to+\infty$ or $n\to-\infty$ or both. So,
by fact (ii) above, every pre-image $\phi^{-1}(z)$ is contained in a single $\tilde\cF^c_f$-leaf.

On the other hand, the quasi-isometry property (iii) implies that if two points $\tilde x$ and
$\tilde y$ are such that $\|\tilde f^n(\tilde x) - \tilde f^n(\tilde y)\|$, $n\in \ZZ$ is bounded then,
denoting by $[\tilde x, \tilde y]_c$ the center segment between the two points, the length of
$\tilde f^n([\tilde x, \tilde y]_c)$, $n\in\ZZ$ is also bounded. That implies that the whole
segment is contained in the same pre-image $\phi^{-1}(z)$.
\end{proof}

\begin{proposition}\label{p.center-to-center}
The image under $\phi:\TT^3\to\TT^3$ of any center leaf of $f$ is contained in some center leaf of $A$.
\end{proposition}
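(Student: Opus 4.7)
The plan is to lift everything to the universal cover $\RR^3$ and work with $\tilde f$, $\tilde A$, $\tphi$, and the lifted center foliations $\tilde\cF^c_f$ and $\tilde\cF^c_A$. Since the leaves of $\tilde\cF^c_A$ are the affine lines parallel to the $\lambda_2$-eigenspace $E_2$ of $\tilde A$, the statement reduces to showing that for any two points $\tilde x, \tilde y$ on a common center leaf of $\tilde f$ the vector $w := \tphi(\tilde x) - \tphi(\tilde y)$ lies in $E_2$. Decomposing $w = w_1 + w_2 + w_3$ along the three eigenspaces $E_1, E_2, E_3$, the semiconjugacy gives $\tilde A^n w = \tphi(\tilde f^n \tilde x) - \tphi(\tilde f^n \tilde y)$ for every $n \in \ZZ$, and by \eqref{eq.closetoidentity} this vector differs from $\tilde f^n \tilde x - \tilde f^n \tilde y$ by at most $2C$ in norm.

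The crux is the following uniform estimate: there exists $K > 0$ such that whenever $\tilde z_1, \tilde z_2$ lie on a common center leaf of $\tilde f$, the projection of $\tilde z_1 - \tilde z_2$ onto $E_1 \oplus E_3$ along $E_2$ has norm at most $K$. To prove it I invoke fact (ii) from the proof of Proposition~\ref{p.semiconjugation}: the leaf conjugacy $h$ between $f$ and $A$ lifts to a homeomorphism $\tilde h : \RR^3 \to \RR^3$ at uniformly bounded distance from the identity (a standard property of the leaf conjugacy of Hammerlindl, Potrie, once one chooses the lift homotopic to $\id$). Because $\tilde h$ sends the $\tilde f$-center leaf through $\tilde z_1, \tilde z_2$ onto a single line parallel to $E_2$, the vector $\tilde h(\tilde z_1) - \tilde h(\tilde z_2)$ already lies in $E_2$; writing $\tilde z_1 - \tilde z_2 = (\tilde h(\tilde z_1) - \tilde h(\tilde z_2)) + (\tilde z_1 - \tilde h(\tilde z_1)) - (\tilde z_2 - \tilde h(\tilde z_2))$ and projecting to $E_1 \oplus E_3$ kills the first summand and bounds the remainder uniformly.

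Applying the key estimate to the pair $\tilde f^n \tilde x, \tilde f^n \tilde y$ (which lie on a common center leaf of $\tilde f$ for every $n$) and combining with the $2C$ bound above, the $E_1 \oplus E_3$ component of $\tilde A^n w$, namely $\lambda_1^n w_1 + \lambda_3^n w_3$, has norm bounded by $K + 2C$ for every $n \in \ZZ$. Since $\lambda_1 < 1 < \lambda_3$, letting $n \to +\infty$ forces $w_3 = 0$ (otherwise $\|\lambda_3^n w_3\|$ diverges) while letting $n \to -\infty$ forces $w_1 = 0$ (otherwise $\|\lambda_1^n w_1\| = \lambda_1^{-|n|} \|w_1\|$ diverges). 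Hence $w \in E_2$, so $\tphi$ sends the entire center leaf through $\tilde x$ into a single $\tilde A$-center leaf, as claimed.

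The one input not entirely internal to the preceding sections is the uniform bound on $\|\tilde h - \id\|_\infty$ used in the key estimate; this is the main place where additional external information is required, although it is standard in the literature on partial hyperbolicity on the $3$-torus. Everything else is a routine consequence of the semiconjugacy and the eigenvalue asymptotics of $\tilde A$.
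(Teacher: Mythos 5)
Your proof is correct, but it follows a genuinely different route from the paper's. The paper does not argue on center leaves directly: it first shows that $\tphi$ sends every lifted center-stable leaf into a translate of $E^{cs}$ (and similarly for center-unstable leaves), using Potrie's result that each leaf of $\tilde\cF^{cs}_f$ lies within a uniform distance $R$ of a translate of the plane $E^{cs}$, together with \eqref{eq.closetoidentity}, the $\tilde A$-invariance of the family of image sets, and the expansion of $\tilde A$ transverse to $E^{cs}$; the statement for center leaves is then obtained by intersecting the $cs$- and $cu$-conclusions. You instead work with the one-dimensional center directly, establishing a uniform bound on the $E_1\oplus E_3$ component of the displacement between two points on a common lifted center leaf and then letting $n\to\pm\infty$ in the relation $\tilde A^n w=\tphi(\tilde f^n\tilde x)-\tphi(\tilde f^n\tilde y)$; this is a clean two-sided eigenvalue argument and avoids the $cs/cu$ reduction altogether. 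The price is the one external input you flag yourself: your key estimate rests on the lift $\tilde h$ of the Hammerlindl--Potrie leaf conjugacy being at bounded distance from the identity, which amounts to $h$ being homotopic to the identity --- true for their construction, but not contained in fact (ii) as stated in the paper (an arbitrary leaf conjugacy need only induce an automorphism commuting with $A_*$ on homology). If you wish to stay entirely within the paper's quoted inputs, note that your key estimate also follows from the same Potrie quasi-plane property the paper uses: a center leaf is the intersection of a $cs$-leaf and a $cu$-leaf, each within uniformly bounded distance of a translate of $E^{cs}$, respectively $E^{cu}$, so the displacement of two points on a common center leaf has uniformly bounded component along $E_3$ and along $E_1$; with that substitution your argument becomes a self-contained alternative proof. (A minor cosmetic point: the bound on the $E_1\oplus E_3$ component of $\tilde A^n w$ should be $K+2C\|P\|$, with $P$ the projection along $E_2$, rather than $K+2C$; this does not affect the conclusion.)
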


\begin{proof}
We claim that the image of any center-stable leaf of $f$ is contained in a leaf of the center-stable
foliation of $A$; note that the center-stable leaves of $A$ are just the translates of the
center-stable subspace $E^{cs}$. Analogously, one gets that the image of any center-unstable leaf
of $f$ is contained in a translate of the center-unstable subspace $E^{cu}$. Taking intersections,
we get that the image of every center leaf of $f$ is contained in a center leaf of $A$, as stated.
So, we have reduced the proof of the proposition to proving this claim.

Let $\cF_f^{cs}$ be the center-stable foliation of $f$ and $\tilde\cF^{cs}_f$ be its leaf to the
universal cover $\RR^3$. By Potrie~\cite{Pot15} (Theorem~5.3 and Proposition~A.2), there exists
$R>0$ such that every leaf of $\tilde\cF^{cs}_f$ is contained in the $R$-neighborhood of some
translate $\tilde x+E^{cs}$ of the plane $E^{cs}$ inside $\RR^3$.
Then, since $\|\tphi-\id\|\le C$, the image of every leaf of $\tilde\cF^{cs}_f$ under
$\tphi$ is contained in the $R+C$-neighborhood of $\tilde x+E^{cs}$.

Since $\tilde\cF^{cs}_f$ is invariant under $\tilde f$ and $\tphi$ semi-conjugates $\tilde f$
to $\tilde A$, the family of images $\tphi(\tilde\cF^{cs}_f(\tilde y))$, $\tilde y\in\RR^3$
is invariant under $\tilde A$. The family of translates of $\tilde y + E^{cs}$, $\tilde y\in\RR^3$
is also invariant under $\tilde A$, of course. Moreover, $\tilde A$ is expanding in the direction
transverse to $E^{cs}$. Thus, the only way the conclusion of the previous paragraph may occur is
if every image $\tphi(\tilde\cF^{cs}_f(\tilde y))$ is contained in $\tphi(\tilde y)+E^{cs}$.

Projecting this conclusion down to the torus, we get our claim.
\end{proof}

\begin{corollary}\label{c.leaf_unique}
The pre-image under $\phi:\TT^3\to\TT^3$ of any center leaf of $A$ consists of a unique center leaf of $f$.
\end{corollary}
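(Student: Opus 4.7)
The strategy is to show the map $\Phi:\cF^c_f\to\cF^c_A$ induced by $\phi$ (sending $L$ to the unique $A$-center leaf containing $\phi(L)$, which is well-defined by Proposition~\ref{p.center-to-center}) is a bijection, and moreover that $\phi(L)$ covers the whole target leaf. Surjectivity of $\Phi$ is immediate: given an $A$-center leaf $L_A$, pick any $z\in L_A$ and any $x\in\phi^{-1}(z)$ (using surjectivity of $\phi$); the $f$-center leaf $L$ through $x$ then satisfies $\Phi(L)=L_A$. Once we know in addition that $\phi|_L$ maps onto $L_A$, injectivity follows quickly: if $L'$ is any other $f$-center leaf with $\phi(L')\subset L_A$, take $x'\in L'$ and use surjectivity of $\phi|_L$ to find $y\in L$ with $\phi(y)=\phi(x')$; Proposition~\ref{p.semiconjugation} forces $x'$ and $y$ into the same $f$-center leaf, so $L=L'$. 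The same argument yields $\phi^{-1}(L_A)=L$ with no extra points.

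The crux is therefore to show $\phi|_L:L\to L_A$ is surjective, and for this I would work in the universal cover. Lift $L$ to $\tilde L\subset\RR^3$, and let $\tilde L_A$ denote the lift of $L_A$ that contains $\tphi(\tilde L)$; it is an affine line parallel to the center eigenspace $E_2$ of $A$. By fact~(iii) in the proof of Proposition~\ref{p.semiconjugation} together with the slab estimate of Potrie used in the proof of Proposition~\ref{p.center-to-center}, $\tilde L$ is properly embedded and contained in a tubular neighborhood of bounded radius around some translate $\tilde p+E_2$. Parametrizing $\tilde L$ by arclength as $\gamma:\RR\to\RR^3$, the bounded transverse displacement together with the quasi-isometry bound forces the fibers of the $E_2$-projection $s\mapsto\pi_{E_2}(\gamma(s))$ to be of uniformly bounded diameter in $\RR$; together with continuity and properness of $\gamma$, this implies that $\pi_{E_2}\circ\gamma$ is a proper continuous map with opposite limits at $\pm\infty$, hence surjective. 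Since $\|\tphi-\id\|\le C$ by \eqref{eq.closetoidentity}, the map $s\mapsto\pi_{E_2}(\tphi(\gamma(s)))$ differs from $\pi_{E_2}\circ\gamma$ by a bounded continuous function and is therefore also surjective onto $E_2$; as its image lies in $\tilde L_A$, one concludes $\tphi(\tilde L)=\tilde L_A$, and projecting down gives $\phi(L)=L_A$.

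The main obstacle is ruling out a ``U-shape'' scenario in which $\gamma$ escapes to $+\infty$ along $E_2$ in both directions of $\tilde L$: the $E_2$-projection would then hit only a half-line and $\tphi(\tilde L)$ would fail to cover $\tilde L_A$. Such a configuration, however, would force $\gamma(s)$ and $\gamma(-s)$ to have close $E_2$-projections while their arclength separation $2s$ grows without bound; the slab estimate controls their transverse separation, so their Euclidean distance stays bounded, and the quasi-isometry bound then produces a contradiction. This is precisely where both the slab geometry and the quasi-isometry of the center leaves are used in an essential way.
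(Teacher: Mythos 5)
Your argument is correct, but it follows a genuinely different route from the paper's. The paper's proof is soft and topological: by Proposition~\ref{p.center-to-center} the pre-image of an $A$-center leaf is a union of $f$-center leaves, by Proposition~\ref{p.semiconjugation} their images are pairwise disjoint, and each image is an \emph{open} subset of the $A$-leaf because $\phi$ restricted to a center leaf is monotone (its fibers are compact connected center arcs); since the $A$-leaf is connected and these open disjoint images cover it, there can be only one leaf in the pre-image. You instead prove the stronger intermediate statement that $\phi$ maps each $f$-center leaf \emph{onto} the corresponding $A$-center leaf, working in the universal cover with the bounded tube around a translate of $E_2$, properness, the quasi-isometry of center leaves, and \eqref{eq.closetoidentity}, and then deduce uniqueness from Proposition~\ref{p.semiconjugation}. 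What your approach buys is an explicit leafwise surjectivity statement (which in the paper only comes out a posteriori, from uniqueness plus surjectivity of $\phi$) and a quantitative picture of why the image cannot stop short; what it costs is more geometric bookkeeping, including the implicit use of Potrie's facts that lifted center leaves are properly embedded lines (so that your arclength parametrization $\gamma:\RR\to\RR^3$ exists) and the slab estimates for both the $cs$- and $cu$-foliations -- note that the tube around $\tilde p+E_2$ can be obtained more cheaply from Proposition~\ref{p.center-to-center} together with \eqref{eq.closetoidentity}. One small imprecision: in ruling out the ``U-shape'' you compare $\gamma(s)$ with $\gamma(-s)$, whose projections need not be close; the correct formulation is to use the intermediate value theorem to produce parameters $s_1\to+\infty$, $s_2\to-\infty$ with \emph{equal} $E_2$-projections, which your bounded-fiber-diameter claim (bounded transverse separation plus quasi-isometry) then contradicts -- so the idea is right and the fix is immediate.
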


\begin{proof}
Let$\cF^c_A(y)$ be any center-leaf of $A$. Proposition~\ref{p.center-to-center} implies that its pre-image
is a union of center leaves and Proposition~\ref{p.semiconjugation} implies that the images of
these center leaves are pairwise disjoint. So, by connectivity, it suffices to prove that if the
image of a center leaf $\cF^c_{f}(x)$ is contained in $\cF^c_A(y)$ then this image is an open subset
of $\cF^c_A(y)$. For that, it suffices to observe that the map $\phi:\cF^c_{f}(x)\to\cF^c_A(y)$ is
monotone, which is an immediate consequence of Proposition~\ref{p.semiconjugation}.
\end{proof}

\begin{corollary}\label{c.measurability}
$Y=\{y\in\TT^3: \phi^{-1}(y) \text{ consists of a single point}\}$ is a Borel set and the restriction $\phi:\phi^{-1}(Y)\to Y$ is a
homeomorphism with respect to the relative topologies. In particular, the inverse $\phi^{-1}$ is a measurable map.
\end{corollary}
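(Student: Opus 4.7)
The plan is to establish the corollary in three steps: Borel measurability of $Y$, continuity of the set-theoretic inverse on $Y$, and the final measurability conclusion.

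The main tool throughout is the observation that, since $\phi$ is continuous and $\TT^3$ is compact, the set-valued map $y\mapsto \phi^{-1}(y)$ is upper semi-continuous in the Hausdorff topology on compact subsets of $\TT^3$: indeed, if $y_n \to y$ and $x_n\in \phi^{-1}(y_n)$, any limit point of $(x_n)$ lies in $\phi^{-1}(y)$ by continuity of $\phi$. As a direct consequence, the real-valued function $y\mapsto \diam(\phi^{-1}(y))$ is upper semi-continuous, so $\{y\in\TT^3: \diam(\phi^{-1}(y))<1/n\}$ is open for each $n\ge 1$. Writing $Y=\bigcap_{n\ge 1}\{y: \diam(\phi^{-1}(y))<1/n\}$ then exhibits $Y$ as a $G_\delta$ set, hence Borel. (This uses Proposition~\ref{p.semiconjugation} only to the extent that $\phi^{-1}(y)$ is a single point precisely when its diameter vanishes.)

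Next I would show that the restriction $\phi:\phi^{-1}(Y)\to Y$ is a homeomorphism with respect to the relative topologies. It is already a continuous bijection by the definition of $Y$, so it remains to check continuity of the inverse. Given $(y_n)\subset Y$ with $y_n\to y\in Y$, set $x_n=\phi^{-1}(y_n)$ and $x=\phi^{-1}(y)$. By compactness of $\TT^3$, any subsequence of $(x_n)$ has a convergent sub-subsequence with limit $x'$; continuity of $\phi$ forces $\phi(x')=y$, and since $\phi^{-1}(y)=\{x\}$ we get $x'=x$. Hence $x_n\to x$, proving sequential continuity of $\phi^{-1}:Y\to\phi^{-1}(Y)$, which suffices as both sides are metric.

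Finally, the measurability statement follows immediately: a continuous map between subsets of standard Borel spaces is Borel measurable, and $\phi^{-1}$ extends trivially to a Borel-measurable partial map defined on the Borel set $Y$. The only step that requires care is ensuring upper semi-continuity of the diameter, but this is classical; no real obstacle arises here, since this corollary is essentially a packaging of the geometric content already established in Proposition~\ref{p.semiconjugation} and Corollary~\ref{c.leaf_unique}.
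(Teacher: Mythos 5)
Your proof is correct and follows essentially the same route as the paper: measurability of $Y$ via upper semi-continuity of a fiber-size function, and continuity of the inverse via compactness of $\TT^3$. The only (harmless) differences are that you measure the fiber by its diameter --- so you do not even need Proposition~\ref{p.semiconjugation} telling you that $\phi^{-1}(y)$ is a center segment, only that it is compact and nonempty --- where the paper uses the length of that segment, and that you prove continuity of $\phi^{-1}$ by a sequential compactness argument where the paper instead checks that the restriction $\phi:\phi^{-1}(Y)\to Y$ is a closed map.
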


\begin{proof}
We already know that $\phi^{-1}(y)$ is always a center leaf segment. Since $\phi$ is continuous, the length of
this segment is an upper semi-continuous (hence measurable) function of $y$. In particular, the set $Y$ of
points where the length is equal to zero is measurable, as claimed in the first part of the corollary.

It is clear that the restriction $\phi:\phi^{-1}(Y)\to Y$ is a continuous bijection. So, to prove the second part
of the corollary it suffices to check that it is also a closed map. By definition, every closed subset of
$\phi^{-1}(Y)$ may be written as $K \cap \phi^{-1}(Y)$ for some compact subset of $\TT^3$. Observe that
$\phi\big(K \cap \phi^{-1}(Y)\big) = \phi(K) \cap Y$ and this is a closed subset of $Y$, because $\phi(K)$ is a
compact subset of $\TT^3$. This proves that $f$ is indeed a closed map.
\end{proof}

\subsection{Diffeomorphisms derived from Anosov}

Let $\phi_*$ denote the push-forward map induced by $\phi$ in the space of probability measures.
We have the following general result:

\begin{proposition}\label{p.surjectivity}
Consider continuous maps $g:M\to M$ and $h:N\to N$ in compact spaces and suppose there exists a
continuous surjective map $p:M\to N$ such that $p\circ g = h \circ p$. Then:

\begin{itemize}
\item[(a)] The push-forward $p_*$ maps the space of $g$-invariant (respectively, $g$-ergodic) probability
measures onto the space of $h$-invariant (respectively, $h$-ergodic) probability measures.

\item[(b)] If $\nu$ is an $h$-invariant probability measure in $N$ such that $\#\phi^{-1}(y)=1$
for $\nu$-almost every $y\in N$ then there exists a unique probability measure $\mu$ in $M$ such
that $\phi_*\mu=\nu$. This measure $\mu$ is $g$-invariant and it is $g$-ergodic if and only if
$\nu$ is $h$-ergodic.
\end{itemize}
\end{proposition}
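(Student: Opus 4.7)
The plan is to handle the two parts separately: part (a) reduces to functoriality of the pushforward plus a Krylov--Bogolyubov-style averaging argument, and part (b) uses the hypothesis $\#p^{-1}(y)=1$ to identify $\mu$ with a pullback of $\nu$.

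For part (a), I would first observe that the identity $p_* \circ g_* = (p\circ g)_* = (h\circ p)_* = h_* \circ p_*$ immediately shows that $g$-invariance of $\mu$ implies $h$-invariance of $p_*\mu$. The same identity, applied at the level of measurable sets, shows that if $B\subset N$ is $h$-invariant then $p^{-1}(B)$ is $g$-invariant; hence $g$-ergodicity of $\mu$ forces $(p_*\mu)(B)=\mu(p^{-1}(B))\in\{0,1\}$. The substantive step is surjectivity. Given an $h$-invariant measure $\nu$, I would first produce some probability $\mu_0$ on $M$ with $p_*\mu_0=\nu$: since $p$ is continuous and surjective, the image $p_*(\cM(M))$ is a weak$^*$-closed convex subset of $\cM(N)$ containing every Dirac mass $\delta_y = p_*\delta_x$ for any $x\in p^{-1}(y)$, and hence equals $\cM(N)$. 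I would then form the Ces\`aro averages $\mu_n=\frac{1}{n}\sum_{k=0}^{n-1}g_*^k\mu_0$, each of which still projects to $\nu$ (because $p_*g_*^k=h_*^k p_*$ and $\nu$ is $h$-invariant), and extract any weak$^*$ limit point $\mu$; standard arguments show $\mu$ is $g$-invariant, while continuity of $p_*$ gives $p_*\mu=\nu$. For ergodic surjectivity, I would take a $g$-invariant $\mu\in p_*^{-1}(\nu)$ and decompose it ergodically as $\mu=\int \mu_\alpha\,d\lambda(\alpha)$; pushing forward, $\nu=\int p_*\mu_\alpha\,d\lambda(\alpha)$ expresses the $h$-ergodic measure $\nu$ as an average of $h$-invariant measures, so extremality of $\nu$ in the simplex of $h$-invariant measures forces $p_*\mu_\alpha=\nu$ for $\lambda$-almost every $\alpha$, providing a $g$-ergodic lift.

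For part (b), let $Y=\{y\in N:\#p^{-1}(y)=1\}$; by hypothesis $\nu(Y)=1$, so any $\mu$ with $p_*\mu=\nu$ must satisfy $\mu(p^{-1}(Y))=1$. The restriction $p:p^{-1}(Y)\to Y$ is a Borel bijection with measurable inverse (in the intended application this is exactly Corollary~\ref{c.measurability}), so $\mu$ is forced to be the pushforward of $\nu$ under $p^{-1}$; this gives both uniqueness and a concrete description. Existence and invariance of $\mu$ then come from part (a). For the ergodicity equivalence, one direction is again part (a); for the converse, if $\nu$ is $h$-ergodic and $\mu=\int \mu_\alpha \,d\lambda(\alpha)$ is the ergodic decomposition of $\mu$, then each $p_*\mu_\alpha$ is $h$-invariant and their average is the ergodic $\nu$, so $p_*\mu_\alpha=\nu$ for $\lambda$-almost every $\alpha$; the uniqueness just established forces $\mu_\alpha=\mu$ almost surely, i.e.\ $\mu$ is $g$-ergodic.

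The only delicate point is the measurability of the inverse map used in the uniqueness argument of (b); in the generality of the abstract statement this requires a Kuratowski-type theorem on Borel bijections between Polish spaces, but in the setting of this paper it is handled directly by Corollary~\ref{c.measurability}. Everything else is routine bookkeeping of pushforwards and ergodic decompositions.
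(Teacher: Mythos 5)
Your argument is correct, and it follows the same overall skeleton as the paper (lift $\nu$ to some measure on $M$, take Ces\`aro averages to get invariance, use the ergodic decomposition and extremality for ergodic surjectivity, and derive uniqueness in (b) from injectivity of $p$ over a full $\nu$-measure set), but the two technical ingredients are supplied differently. The paper uses a single tool for both: a measurable section $\sigma:N\to M$ with $p\circ\sigma=\id$, obtained from the measurable selection theorem of Castaing--Valadier applied to the upper semicontinuous map $y\mapsto p^{-1}(y)$. This section gives the initial lift $\omega=\sigma_*\nu$ in (a), and in (b) it gives uniqueness in one line, since $\sigma\circ p=\id$ on $p^{-1}(Y)$ forces $\mu=\sigma_*p_*\mu=\sigma_*\nu$ for any $\mu$ projecting to $\nu$. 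You instead get the lift in (a) from the fact that $p_*(\cM(M))$ is a compact convex set containing all Dirac masses, hence all of $\cM(N)$ (Krein--Milman, or density of convex combinations of Diracs), and you get uniqueness in (b) from Borel invertibility of $p$ restricted to $p^{-1}(Y)$, which in the abstract compact metric setting needs a Lusin--Suslin type theorem (note that $Y$ is Borel because $y\mapsto\diam p^{-1}(y)$ is upper semicontinuous) and in the application is exactly Corollary~\ref{c.measurability}, as you say. Both routes are legitimate in the intended setting ($M=N=\TT^3$, hence Polish); the paper's section argument has the advantage of handling (a) and (b) with one construction and avoiding descriptive set theory, while your convexity argument for the existence of a lift is arguably more elementary and self-contained. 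Your treatment of the ergodicity equivalence in (b) (ergodic decomposition of $\mu$ plus the uniqueness just proved) is a mild variant of the paper's ``take an ergodic lift from (a) and identify it with $\mu$ by uniqueness''; the two are equivalent.
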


\begin{proof}
It is easy to see from the relation $p\circ g = h \circ p$ that the push-forward of any $g$-invariant
probability measure $\mu$ is an $h$-invariant probability measure. To prove surjectivity, we need
the following consequences of the fact that $p$ is continuous:
\begin{enumerate}
\item $y \mapsto p^{-1}(y)$ is a map from $N$ to the space $\cK(M)$ of compact subsets of $M$;
\item this map is upper semicontinuous, with respect to the Hausdorff topology on $\cK(M)$.
\end{enumerate}
In particular, $y \mapsto p^{-1}(y)$ is measurable with respect to the Borel $\sigma$-algebras
of $N$ and $\cK(M)$. Then, \cite[Theorem~III.30]{CaV77} asserts that $p$ admits a measurable
section, that is, a measurable map $\sigma:N\to M$ such that $\sigma(y) \in p^{-1}(y)$ for
every $y\in N$ or, equivalently, $p\circ\sigma=\id$.

Given any $h$-invariant probability measure $\nu$, let $\omega=\sigma_*\nu$. Then $\omega$ is a
probability measure on $M$, not necessarily invariant, such that $p_*\omega=\nu$.
Since $p \circ g = h \circ p$, it follows that every iterate $f_*^j\omega$ also projects down to $\nu$.
Let $\mu_0$ be any accumulation point of the sequence
$$
\frac 1n \sum_{j=0}^{n-1} g_*^j\omega.
$$
It is well known that $\omega$ is $g$-invariant and, since the map $p_*$ is continuous,
it follows from the previous observations that $p_*\omega=\nu$. This proves surjectivity in the
space of invariant measures.

It is clear that $p_*\mu$ is $h$-ergodic whenever $\mu$ is $g$-ergodic. Conversely, let $\nu$ be
any $h$-ergodic probability measure. By the previous paragraph, there exists some $g$-invariant
probability measure $\mu$ such that $p_*\mu=\nu$. Let $\mu=\int \mu_P \, dP$ be the ergodic
decomposition of $\mu$ (see \cite[Chapter~5]{FET}). Since $p_*$ is a continuous linear operator,
\begin{equation}\label{eq.ergodic_decomposition}
\nu = p_*\mu = \int (p_*\mu_P) \, dP.
\end{equation}
By the previous observation, $p_*\mu_P$ is $h$-ergodic for almost every $P$. Thus, by uniqueness,
\eqref{eq.ergodic_decomposition} must be the ergodic decomposition of $\nu$. As we take $\nu$
to be ergodic, this implies that $p_*\mu_P=\nu$ for almost every $P$. That proves surjectivity
in the space of ergodic measures.

Now suppose that the set $Z=\{y\in N: \#\phi^{-1}(y)=1\}$ has full measure for $\nu$. Let $\mu$ be any
probability measure with $\phi_*\mu=\nu$. Then, in particular, $\mu(\phi^{-1}(Z))=1$.
Observe also that $\sigma\circ\phi(x)=x$ for every $x\in\phi^{-1}(Z)$. Then
$$
\mu = \sigma_*\phi_*\mu = \sigma_*\nu = \omega.
$$
This proves that $\mu$ is unique. By the surjectivity statements in the previous paragraph,
$\mu$ must be $g$-invariant and it must be $g$-ergodic if $\nu$ is $h$-ergodic.
\end{proof}

\begin{theorem}\label{main.C}
The map $\phi_*$ preserves the entropy and it is a bijection restricted to the subsets of invariant
ergodic probability measures with entropy larger than $\log\lambda_3$.
\end{theorem}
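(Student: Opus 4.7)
The plan is to establish entropy preservation first, and then combine it with Theorem~\ref{t.entropyestimation} and Proposition~\ref{p.surjectivity} to upgrade the surjectivity already provided by Proposition~\ref{p.surjectivity}(a) to a bijection on the subset of ergodic probability measures with entropy greater than $\log\lambda_3$.

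For entropy preservation, pulling back partitions from $\TT^3$ via $\phi$ gives the easy inequality $h_{\phi_*\mu}(A)\le h_\mu(f)$ for every $f$-invariant measure $\mu$. For the reverse inequality I would invoke the standard relative variational principle for factor maps,
$$
h_\mu(f) \le h_{\phi_*\mu}(A) + \sup_{y\in\TT^3} h_{\mathrm{top}}(f,\phi^{-1}(y)),
$$
where $h_{\mathrm{top}}(f,\cdot)$ denotes Bowen's topological entropy on a (possibly non-compact) subset. The key input is Proposition~\ref{p.semiconjugation}: every fiber $\phi^{-1}(y)$ is a connected arc of uniformly bounded length inside a single $f$-center leaf, and the semiconjugacy relation $f(\phi^{-1}(y))=\phi^{-1}(A(y))$ forces every forward image $f^n(\phi^{-1}(y))$ to remain inside a one-dimensional leaf with uniformly bounded diameter. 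Hence the number of $(n,\varepsilon)$-separated points in $\phi^{-1}(y)$ is bounded by a constant depending only on $\varepsilon$, so $h_{\mathrm{top}}(f,\phi^{-1}(y))=0$ uniformly in $y$ and $h_{\phi_*\mu}(A)=h_\mu(f)$.

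With entropy preservation in hand, surjectivity of $\phi_*$ on the set of ergodic measures with entropy greater than $\log\lambda_3$ is immediate from Proposition~\ref{p.surjectivity}(a). For injectivity, suppose $\mu$ is an $f$-ergodic measure with $h_\mu(f)>\log\lambda_3$ and set $\nu=\phi_*\mu$. Then $h_\nu(A)>\log\lambda_3$, so Theorem~\ref{t.entropyestimation} asserts that the Rokhlin disintegration of $\nu$ along the $A$-center foliation consists of non-atomic measures on almost every leaf. On the other hand, by Corollary~\ref{c.leaf_unique} each $A$-center leaf $L_A$ is the image under $\phi$ of a unique $f$-center leaf $L_f$, and $\phi$ restricts to a continuous monotone surjection $L_f\to L_A$. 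The set of $y\in L_A$ with non-trivial fiber $\phi^{-1}(y)$ is precisely the set of plateaus of this monotone map, hence at most countable since the plateaus are pairwise disjoint sub-arcs of positive length inside the one-dimensional leaf $L_f$. Combining the two observations, the set $\{y:\#\phi^{-1}(y)\ge 2\}$ has zero conditional $\nu$-measure on $\nu$-almost every leaf, hence $\nu(Y)=1$ for the set $Y$ of Corollary~\ref{c.measurability}. Proposition~\ref{p.surjectivity}(b) then yields a unique $\mu$ with $\phi_*\mu=\nu$, proving injectivity.

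The main obstacle I expect is the clean invocation of the relative variational principle in the $C^1$ generality considered here. The separated-set estimate controlling $h_{\mathrm{top}}(f,\phi^{-1}(y))$ is routine once the uniform bound on fiber length from Proposition~\ref{p.semiconjugation} is available, so the difficulty is essentially bookkeeping rather than conceptual. All the other ingredients (surjectivity, monotonicity on center leaves, and the disintegration dichotomy) are direct consequences of results already established earlier in the paper.
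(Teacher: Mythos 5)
Your proposal follows essentially the same route as the paper: entropy preservation via the Ledrappier--Walters inequality together with the uniform length bound on fiber arcs from Proposition~\ref{p.semiconjugation} (so each $\phi^{-1}(y)$ carries zero topological entropy), and injectivity reduced via Proposition~\ref{p.surjectivity}(b) to the observation that non-trivial fibers are pairwise disjoint arcs, hence countable on each center leaf, which Theorem~\ref{t.entropyestimation} forbids from carrying positive $\nu$-measure. The only cosmetic imprecisions are harmless: the $(n,\varepsilon)$-separated count in a fiber is bounded linearly in $n$ rather than by a constant, which still gives zero entropy, and the non-atomicity of conditionals you attribute to Theorem~\ref{t.entropyestimation} is really Proposition~\ref{p.continuous} (equivalently, one applies the theorem's contrapositive to the $A$-invariant set of points with non-trivial fiber, as the paper does).
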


Ures~\cite{Ure12} proved a version of this result for measures of maximal entropy.

\begin{proof}
Let $\mu$ be any invariant probability measure. Clearly, $h_{\phi_*\mu}(A) \le h_\mu(f)$. On the
other hand, the Ledrappier-Walters formula~\cite{LeW77} implies that
$$
h_\mu(f) \le h_{\phi_*(\mu)}(A) + \max_{z\in\TT^3} h(f,\phi^{-1}(z)),
$$
where $h(f,K)$ denotes the topological entropy of $f$ on a compact set $K\subset\TT^3$.
See Viana, Oliveira~\cite[Section~10.1.2]{FET}, for instance. In our case, $K=\phi^{-1}(z)$ is a
one-dimensional segment whose images have bounded length. Hence, the topological entropy is
zero and so we get that $h_\mu(f) \le h_{\phi_*(\mu)}(A)$. This proves that $\phi$ preserves the
entropy.

Now, in view of Proposition~\ref{p.surjectivity}, we only have to show that if $\nu$ is an
$A$-ergodic probability measure with $h_\nu(A)>\log\lambda_3$ then $\phi^{-1}(y)$ consists
of a single point for $\nu$-almost every $y\in\TT^3$. Suppose otherwise. Then, using the
first part of Corollary~\ref{c.measurability}, there exists a positive measure set
$W\subset\TT^3$ such that $\phi^{-1}(y)$ is a non-trivial arc of a center leaf of $f$.
By ergodicity, we may suppose that $W$ has full measure. Let $L$ be any center leaf of $A$.
By Corollary~\ref{c.leaf_unique}, the pre-image $\phi^{-1}(W\cap L)$ is a subset of a unique
center leaf of $f$. Moreover, it is the union of the non-trivial arcs $\phi^{-1}(y)$ with
$y\in W \cap L$. Since these arcs are pairwise disjoint, there can only be
countably many of them. Thus, $W\cap L$ is countable. Then, we may use
Theorem~\ref{t.entropyestimation} to conclude that $h_\nu(A) \le \log \lambda_3$, which
contradicts the hypothesis.
\end{proof}

\begin{theorem}\label{main.D}
If $\mu$ is an ergodic measure of $f$ with negative center exponent, then $h_\mu(f)\le \log \lambda_3$
and there exists a full $\mu$-measure subset which intersects almost every center leaf on a single point.
\end{theorem}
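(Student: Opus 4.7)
The plan is to show that the Franks semiconjugacy $\phi$ is, under the hypothesis of negative center exponent, in fact a measure-theoretic isomorphism between $(f,\mu)$ and $(A,\phi_*\mu)$, and then to deduce both conclusions from Propositions~\ref{p.atomic} and~\ref{p.newentropyformula} applied to $(A,\phi_*\mu)$. The first task is to prove that $\phi^{-1}(\phi(x)) = \{x\}$ for $\mu$-almost every $x$. Suppose otherwise: on a $\mu$-positive set the fiber $\gamma_x := \phi^{-1}(\phi(x)) \subset \cF^c_f(x)$ is a nontrivial center arc whose backward iterates $f^{-n}(\gamma_x)$ have uniformly bounded length, by Proposition~\ref{p.semiconjugation}. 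But the negative center exponent of $\mu$ means $E^c$ is an Oseledets direction for $f^{-1}$ with positive exponent $-\tau^c > 0$, so Pesin's unstable manifold theorem applied to $f^{-1}$ furnishes a measurable positive radius $r(x)$ with $d_c(f^{-n}(x),f^{-n}(y)) \to \infty$ for every $y \in \cF^c_f(x)$ satisfying $0 < d_c(x,y) < r(x)$. Choosing $y \in \gamma_x$ with $d_c(x,y) < r(x)$ contradicts the length bound.

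The next and hardest step is to show that the conditional measures of $\mu$ along $\cF^c_f$ are purely atomic. Assume, for contradiction, that inside some foliation box the center-plaque conditionals $\mu_s$ are non-atomic on a positive measure set of transversal parameters~$s$. Fix $r_0 > 0$ with $\mu(\Lambda_{r_0}) > 0$, where $\Lambda_{r_0}$ is the Pesin block of points whose stable Pesin manifold (tangent to $E^{cs} = E^{ss} \oplus E^c$) has size at least $r_0$. By disintegration $\mu_s(\Lambda_{r_0}) > 0$ for a positive measure set of $s$, and non-atomicity then lets me pick two distinct points $p,q$ in $\supp\mu_s \cap \Lambda_{r_0}$, lying in the full $\mu$-measure set of the previous paragraph and on the same center plaque with $d_c(p,q) < r_0$. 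Pesin's stable manifold theorem at $p$ yields $d_c(f^n p, f^n q) \to 0$ as $n \to \infty$. However, the singleton-fiber property forces $\phi(p) \neq \phi(q)$, so $\phi(p)$ and $\phi(q)$ lie on a common center leaf of $A$ at positive intrinsic distance $\delta$, which expands as $\lambda_2^n \delta$ under the linear action of $A$. Combining the bound $\|\tphi - \mathrm{id}\| \leq C$ from~\eqref{eq.closetoidentity} with the quasi-isometry of $\cF^c_f$-leaves in the universal cover (fact (iii) in the proof of Proposition~\ref{p.semiconjugation}), I obtain $d_c(f^n p, f^n q) \geq \lambda_2^n \delta - 2C \to \infty$, which contradicts the convergence to zero.

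Once atomicity of the $\cF^c_f$-disintegration of $\mu$ is in place, Corollary~\ref{c.leaf_unique} transfers it to atomicity of the $\cF^c_A$-disintegration of $\phi_*\mu$. Since $\cF^c_A$ is a uniformly expanding foliation for $A$, Proposition~\ref{p.atomic} yields both $h_{\phi_*\mu}(A, \cF^c_A) = 0$ and a full $\phi_*\mu$-measure set meeting each center leaf of $A$ in exactly one point; its $\phi$-preimage is then the required full $\mu$-measure set for $f$. For the entropy bound, the singleton-fiber property makes $\phi$ a measurable conjugacy, so $h_\mu(f) = h_{\phi_*\mu}(A)$ (alternatively by Ledrappier--Walters, because the fibers of $\phi$ are center arcs of uniformly bounded length and thus carry zero topological entropy). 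Since the Lipschitz hypothesis in Proposition~\ref{p.newentropyformula} is trivially satisfied by the linear foliations of $A$, that proposition combined with the Ledrappier--Young identity $h_{\phi_*\mu}(A) = h_{\phi_*\mu}(A, \cF^u_A)$ gives
$$
h_\mu(f) = h_{\phi_*\mu}(A, \cF^u_A) \leq h_{\phi_*\mu}(A, \cF^c_A) + \log\lambda_3 = \log\lambda_3,
$$
completing the proof.
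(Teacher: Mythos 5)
Your first step is where the argument breaks down, and it is not a small gap: the claim that $\phi^{-1}(\phi(x))=\{x\}$ for $\mu$-almost every $x$ is false under the hypothesis of negative center exponent. In fact the remark following Theorem~\ref{main.D} in the paper asserts exactly the opposite: $\phi^{-1}(z)$ is a \emph{non-trivial} center segment for $\phi_*\mu$-typical $z$. The mechanism is this: at Pliss-generic points the negative center exponent forces a small center arc $J\ni x$ to contract under all forward iterates, so $\tilde A^n(\tphi(\tilde J))=\tphi(\tf^n(\tilde J))$ stays bounded; since $\tilde A^n$ expands center segments of $A$ by $\lambda_2^n$, the image $\phi(J)$ must be a single point, i.e.\ the fiber through $x$ contains $J$. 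Your purported contradiction via ``Pesin's unstable manifold theorem applied to $f^{-1}$'' does not hold: besides requiring $C^{1+\alpha}$ regularity (here $f$ is only assumed $C^1$, and the paper's proof deliberately avoids Pesin theory), that theorem gives exponential separation of backward iterates only up to a fixed local scale, not $d_c(f^{-n}(x),f^{-n}(y))\to\infty$ for all $y$ in a small center arc. Indeed, for $y$ in the same fiber as $x$, fact (i) in the proof of Proposition~\ref{p.semiconjugation} gives $\|\tf^{-n}(\tx)-\tf^{-n}(\ty)\|\le K$ for all $n$, and the quasi-isometry property (iii) then bounds $d_c(f^{-n}(x),f^{-n}(y))$ uniformly in $n$; so no contradiction with the bounded fiber length is available, and none exists.

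Because the singleton-fiber property fails, your second step collapses as well: the dichotomy you invoke ($\phi(p)\neq\phi(q)$ for distinct generic $p,q$ on a plaque) is unavailable, and what your computation actually proves is that nearby forward-generic points on a center plaque lie in the \emph{same} fiber --- which is compatible with non-atomic conditional measures supported on a fiber arc and falls well short of the conclusion that a full measure set meets almost every center leaf in a single point. The paper's route is different: it bounds the number of $\mu$-generic points on each center leaf by a counting argument that combines Lemma~\ref{l.Kf} (backward iterates of any compact center segment eventually have length at most $K_f$) with Lemma~\ref{l.deltamu} (each generic point forces the Birkhoff average of the backward lengths of any center neighborhood to be at least $\delta_\mu$), so that a leaf can carry at most $N_\mu=3K_f/\delta_\mu$ generic points, and then reduces from finitely many points per leaf to exactly one by the measurable selection $\Gamma_{\min}$. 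Your final entropy computation (pushing forward, zero partial entropy along $\cF^c_A$, and Proposition~\ref{p.newentropyformula} or Theorem~\ref{t.entropyestimation}) would be sound once one knows that a full $\phi_*\mu$-measure set meets almost every center leaf of $A$ countably, but that input must come from an argument of the paper's type, not from your steps 1--2.
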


\begin{proof}
Let us start with the following lemma:

\begin{lemma}\label{l.Kf}
There exists $K_f>0$ depending only on $f$ such that for any compact center segment $I$ there exists
$N_I\ge 1$ such the length of $f^{-n}(I)$ is bounded by $K_f$ for every $n \ge N_I$.
\end{lemma}

\begin{proof}
Let $\phi:\TT^3\to\TT^3$ be the semi-conjugacy introduced previously. Then $\phi(I)$ is a segment
inside a center leaf of $A$, and the same is true for the iterates:
$$
\phi(f^{-n}(I)) =  A^{-n}(\phi(I)) \quad\text{for every $n\ge 1$.}
$$
Since $A^{-1}$ contracts the center direction, because $\lambda_2>1$, the length of  $A^{-n}(\phi(I))$
goes to zero as $n\to+\infty$. As observed before, the map $\tphi$ is at bounded distance from the
identity. It follows that the distance between the endpoints of $f^n(I)$ in $\TT^3$ is uniformly
bounded when $n$ is large. Since the center leaves of $f$ are quasi-isometric (property (iii) above),
it follows that the length of the center segments $f^n(I)$ is uniformly bounded when $n$ is large,
as claimed.
\end{proof}

Let $\Gamma_\mu$ be the set of points $x\in\TT^3$ for which the center Lyapunov exponent is well
defined and coincides with the center Lyapunov exponent $\lambda^c(\mu)$ of the ergodic measure $\mu$.
Thus,
$$
\lim \frac 1n \log |Df^{-n} \mid E^c_f(x)| = - \lambda^c(\mu)
\quad\text{for every $x\in \Gamma_\mu$.}
$$
By ergodicity, $\Gamma_\mu$ is a full $\mu$-measure subset of the torus.

\begin{lemma}\label{l.deltamu}
There exists $\delta_\mu > 0$ such that for any $x \in \Gamma_\mu$ and any neighborhood $U$ of $x$ inside
the center leaf of $f$ that contains $x$, one has
$$
\liminf \frac 1n \sum_{i=0}^{n-1} \length(f^{-i}(U)) \ge \delta_\mu.
$$
\end{lemma}

A similar result was proven in  Lemma~3.8 in our previous paper~\cite{ViY13}. The present statement
is analogous, and even easier, because here we take the center direction to be one-dimensional.

\begin{corollary}
There exists $N_\mu \ge 1$ such that $\#(\Gamma_\mu \cap L) \le N_\mu$ for every center leaf $L$ of $f$.
\end{corollary}

\begin{proof}
Take $N_\mu = 3 K_f/\delta_\mu$. Suppose that $\#(\Gamma_\mu \cap L) > N_\mu$ for some center
leaf $L$. Fix pairwise disjoint neighborhoods around each of these points, and let $I\subset L$ be a
compact segment containing these neighborhoods. From Lemma~\ref{l.deltamu} , we get that
$$
\frac 1n \sum_{i=0}^{n-1} \length(f^{-i}(I)) > N_\mu \frac{\delta_\mu}{2} > K_f
$$
for every large $n$, which contradicts Lemma~\ref{l.Kf}.
\end{proof}

We are left to prove that, up to replacing $\Gamma_\mu$ by some full measure invariant subset, we may take $N_\mu=1$.
This can be seen as follows. Fix an orientation of the leaves of $f$ once and for all (it is clear that the
center foliation of $A$ is orientable and then we may use the semi-conjugacy $\phi$ to define an orientation
of the center leaves of $f$). Let $\Gamma_{\min}$ be the subset of $\Gamma$ formed by the first points of
$\Gamma_\mu$ on each center leaf, with respect to the chose orientation. It is clear that $\Gamma_{\min}$ is
invariant under $f$, because $\Gamma_\mu$ is. We claim that $\Gamma_{\min}$ is a measurable set. Let us assume
this fact for a while. If $\Gamma_{\min}$ has positive measure then, by ergodicity, it has full measure.
Since $\Gamma_{\min}$ intersects every leaf in at most one point, this proves that we may indeed take $N_\mu=1$.
If $\Gamma_{\min}$ has zero measure, just replace $\Gamma_\mu$ with $\Gamma_\mu\setminus \Gamma_{\min}$ and
start all over again. Notice that this $N_\mu$ is replaced with $N_\mu-1$ and so this argument must stop in
less than $N_\mu$ steps.

It remains to check that $\Gamma_{\min}$ is indeed a measurable set. We need

\begin{lemma}\label{l.diameter}
There exists $R>0$ such that the diameter of $\Gamma_\mu \cap L$ inside every center leaf $L$ is
less than $R$.
\end{lemma}

\begin{proof}
By Proposition~\ref{p.surjectivity}(a), the projection $\phi_*\mu$ is an ergodic measure for $A$.
Keep in mind that, by Corollary~\ref{c.leaf_unique}, $\phi$ maps center leaves to center leaves, in an one-to-one fashion.
Thus, $\phi(\Gamma_\mu)$ is a full measure that intersects each center leaf of $A$ at finitely many points.
By Proposition~\ref{p.atomic}, it follows that the intersection consists of a single point.
In other words, the intersection $\Gamma_\mu \cap L$ with each center leaf $L$ is contained in $\phi^{-1}(z)$
for some $z\in \phi (L)$. Then the conclusion of the lemma follows directly from Proposition~\ref{p.semiconjugation}.
\end{proof}

Given $r>0$ and any disk $D$ transverse to the center foliation, let $D(r)$ denote the union of the center
segments of radius $r$ around the points of $D$.
Consider a finite family $\{D_i: i=1, \dots, l\}$ of (small) disks transverse to the center foliation, such that
\begin{enumerate}
\item[(i)] each $D_i(R+2)$ is homeomorphic to the product $D_i \times [-R-2,R+2]$;
\item[(ii)] the sets $D_i(1)$, $i=1, \dots, l$ cover $M$.
\end{enumerate}

For each $i=1, \dots, l$, denote by $\Gamma_{\min}(i)$ the set formed by the first point of $\Gamma$ in the
center leaf through each point in $\Gamma \cap D_i(1)$. Notice that
$$
\Gamma_{\min}(i) \subset D_i(R+2) \quand \Gamma_{\min} = \cup_i \Gamma_{\min}(i)
$$
as a consequence of Lemma~\ref{l.diameter}. Thus we only have to check that, up to replacing $\Gamma_\mu$ by some
invariant full measure subset, each $\Gamma_{\min}(i)$ is a measurable set. The latter can be seen as follows.

Let $i$ be fixed. Identify $D_i(R+2)$ with $D_i\times[-R-2,R+2]$ through the homeomorphism in condition (i) above.
Let $E\subset D_i$ be the vertical projection of $\Gamma \cap D_i(1)$. Theorem~III.23 in \cite{CaV77} ensures that
$E$ is a measurable subset of $D_i$. Moreover, $\Gamma_{\min}(i)$ is the graph of a function $\sigma:E \to [-R-2,R+2]$.
Our goal is to prove that this function is measurable.
If $\Gamma_\mu$ is compact then the function $\sigma$ is lower semi-continuous and thus measurable. In general,
by Lusin, we may find an increasing sequence of compact sets $\Gamma_k\subset\Gamma_\mu$ such that their union
$\Gamma'(i)$ has full measure in $\Gamma_\mu \cap D_i(1)$. By the previous observation, the function
$\sigma_k:E_k\to [-R-2,R+2]$ associated with each $\Gamma_k$ is measurable. The function $\sigma':E' \to [-R-2,R+2]$
associated with $\Gamma'(i)$ is given by
$$
E'=\cup_k E_k \quand \sigma'(z) = \inf_k \sigma_k(z).
$$
Thus, $\sigma'$ is a measurable function as well. To get the claim, just replace $\Gamma_\mu$ with the invariant subset
obtained by removing the orbits through all zero measure sets $\Gamma_\mu \cap D_i(1) \setminus \Gamma'(i)$.

This completes the proof of Theorem~\ref{main.D}.
\end{proof}

\begin{remark}
In the context of Theorem~\ref{main.D}, the map $\phi_*$ is not injective at $\mu$: there is at least one more ergodic
measure $\nu$ such that $\phi_*\mu=\phi_*\nu$. This can be seen as follows. The assumption that the center Lyapunov
exponent of $\mu$ is negative implies that $\phi^{-1}(z)$ is a non-trivial segment for $\phi_*\mu$-typical points $z$.
Let $x$ be an endpoint of $\phi^{-1}(z)$, for any such $z$, and $\nu$ be any accumulation point of the time average
over the orbit of $x$. Then $\phi_*\nu=\phi_*\mu$, because $z$ is taken to be $\phi_*\mu$ typical. Moreover, the center
Lyapunov exponent of $\nu$ can not be negative, for otherwise there would a neighborhood of $x$ inside $\phi^{-1}(z)$,
which would contradict the choice of $x$.
\end{remark}

\subsection{Proof of Theorem~\ref{main.criterion}}

By Theorem~\ref{main.C}, the measure $\nu=\phi_*\mu$ is ergodic and has the same entropy as $\mu$.
In particular, $h_\nu(A) > \log \lambda_3$. Let $Z\subset M$ be any full $\mu$-measure set
and $Z'=Z \cap \phi^{-1}(Y)$, where $Y$ is the as in Corollary~\ref{c.measurability}.
Then $\phi(Z')$ is a measurable subset of $\TT^3$.
Moreover, $\phi(Z')$ has full $\nu$-measure, because $Y$ has full $\nu$-measure (this is contained in the second part
of the proof of Theorem~\ref{main.C}) and so $Z'$ has full $\mu$-measure.
Thus, by Theorem~\ref{t.entropyestimation}, $\phi(Z')$ intersects almost every center leaf of $A$ on an uncountable
subset. By Corollary~\ref{c.leaf_unique}, the pre-image of every center of $A$ is a center leaf of $f$. It follows
that $Z'$ intersects almost every center leaf on an uncountable subset. Then the same holds for $Z$, of course.

We have seen in Theorem~\ref{main.D} that if the center Lyapunov exponent is negative then some full measure subset
intersects almost every center leaf in a single point. In view of the previous paragraph, this ensures that in the
present situation the center exponent is non-negative. We are left to show that the center Lyapunov exponent is
actually positive when $f$ is a $C^2$ diffeomorphism.

By contradiction, assume that the center exponent of $f$ is non-positive. Then the strong-unstable leaf $\cF^{uu}$
coincides with the Pesin unstable manifold at $\mu$-almost every point. Define the \emph{exponential volume growth rate}
of any disk $D$ contained in some strong-unstable leaf of $f$ to be
\begin{equation}\label{eq.growthrate}
G(D)=\liminf_{n\to \infty} \frac{1}{n}\log \frac{\Leb(f^n(D))}{\Leb(D)}.
\end{equation}
It was shown by Cogswell~\cite{Cog00} that $h_\mu(f)\leq G(D)$ whenever $f$ is $C^2$ and $D$ is a neighborhood
of a $\mu$-typical point $x\in M$ inside its Pesin unstable manifold.

Clearly, if the center Lyapunov exponents are non-positive then the Pesin unstable manifold coincides with the
strong-unstable leaf through the point.
So, to complete the proof of Theorem~\ref{main.criterion} it suffices to show that $G(D) \leq \log \lambda_3$ for
any segment $D$ inside some strong-unstable leaf. This can be seen as follows. Let $\tx_1$ and $\tx_2$
be the endpoints of some lift $\tilde D$ of the segment $D$ to the universal cover. By \eqref{eq.closetoidentity},
$$
\|\tf^n(\tx_1)-\tf^n(\tx_2)\|
\le 2C + \|A^n(\tphi(\tx_1))-A^n(\tphi(\tx_2))\|
= 2C + \lambda_3^n \|\tphi(\tx_1) - \tphi(\tx_2)\|.
$$
It has been show by Potrie~\cite[Corollary~7.7]{Pot15} that in the present setting the lift of the unstable foliation
to the universal is quasi-isometric: the distance between any two points along a leaf is bounded by some affine
function of the distance of the two points in the ambient space. Thus, in particular, there exists a uniform constant
$Q >0$ such that
$$
|\tf^n(\tilde{D})|
\leq Q + Q \|\tf^n(\tx_1)-\tf^n(\tx_2)\|
\leq Q (2C+1) + Q \lambda_3^n \|\tphi(\tx_1) - \tphi(\tx_2)\|.
$$
Replacing this estimate in the definition of $G(D)$ we get that $G(D) \le \log\lambda_3$, as claimed.

The proof of Theorem~\ref{main.criterion} is complete.

\subsection{Proof of Corollary~\ref{main.example}}

By Hammerlindl, Ures~\cite[Theorem~7.2]{HaU14}, every $C^2$ volume-preserving partially hyperbolic diffeomorphism
$g\in \cD(A)$ whose integrated center Lyapunov exponent $\lambda^c(g)$ is different from zero is ergodic.
Thus, since the map $g \mapsto \lambda^c(g)$ is continuous, every volume-preserving $g$ in a neighborhood of $f$
is ergodic. We begin by claiming that the disintegration of volume along the center foliation of such a $g$
cannot be atomic.

Indeed, suppose that the disintegration is atomic. Let $\cB_1, \dots, \cB_k$ be a finite cover of $M$ by foliation
charts such that the conditional probabilities of each $\Leb\mid \cB_i$ are purely atomic for almost every plaque.
Equivalently (Appendix~\ref{app.atomic}), every $\cB_i$ admits a full measure subset $Z_i$ whose intersection with
every plaque is countable. Then $Z=Z_1 \cup \cdots \cup Z_k$ is a full measure subset of $M$ that intersects every
center leaf on a countable set. This contradicts Theorem~\ref{main.criterion}.

Now we prove that conditional probabilities of $\Leb$ along center leaves cannot be absolutely continuous respect
to Lebesgue measure. Let $\Gamma$ be the set of points $x\in M$ such that
$$
\lim_n \frac{1}{n}\log \|Dg^n \mid E^c_f(x)\|=\lambda^c(g),
$$
By ergodicity and the Birkhoff theorem, $\Gamma$ has full volume.  Fix $\vep < (\lambda^c(g)-\log \lambda_2)/2$.
Then, there exists a measurable function $n(x): \Gamma\to \mathbb{N}$ such that
$$
\|Dg^n \mid E^c_g(x)\| \geq e^{\vep n}\lambda_2^n
\quad\text{for any $x\in \Gamma$ and $n\geq n(x)$.}
$$
Take $n_0\ge 1$ sufficiently large, such that the set $\Gamma_0=\{x\in \Gamma: n(x) \leq n_0\}$ has positive volume.
Assuming, by contradiction, that the disintegration of the volume measure along the center foliation is
absolutely continuous, we get that there exists some center plaque $L$ such that $\Gamma_0 \cap L$ has positive
Lebesgue measure. By the definition of $\Gamma$,
$$
|g^n(L)| \geq \Leb_{g^n(L)}(g^n(\Gamma_0)) \geq e^{\vep n}\lambda_2^n \Leb_L(\Gamma_0)
\quad\text{for any $n\geq n_0$,}
$$
which implies that $G(L) \geq \log\lambda_2+\vep$.
Thus, to reach a contradiction, it suffices to show that $G(L) \leq \log\lambda_2$.

For proving this latter claim, we use a variation of an argument in the proof of Theorem~\ref{main.criterion}.
Let $\tx_1$ and $\tx_2$ be the endpoints of some lift $\tilde L$ of the segment $L$ to the universal cover.
By Proposition~\ref{p.center-to-center}, $\tphi(\tx_1)$ and $\tphi(\tx_2)$ belong to the
same center leaf of $A$. Then,
$$
\|\tphi(\tg^n(\tx_1)) - \tphi(\tg^n(\tx_2))\|
= \|A^n(\tphi(\tx_1)) - A^n(\tphi(\tx_1))\|
= \lambda_2^n \|\tphi(\tx_1) - \tphi(\tx_2)\|
$$
for every $n\ge 1$. Since $\tphi$ is uniformly close to the identity, by \eqref{eq.closetoidentity},
it follows that
$$
\|\tg^n(\tx_1) - \tg^n(\tx_2)\|
\leq \lambda_2^n\|\tphi(\tx_1) - \tphi(\tx_2)\| + 2C.
$$
Then, using the quasi-isometry property of the center foliation (recall (iii) in the
proof of Proposition~\ref{p.semiconjugation}),
$$
|\tg^n(L)|
\le Q \|\tg^n(\tx_1) - \tg^n(\tx_2)\| + Q
\le Q\lambda_2^n\|\tphi(\tx_1) - \tphi(\tx_2)\| + Q(2C+1),
$$
where $Q$ is a uniform constant. Replacing this in \eqref{eq.growthrate} we get
that $G(L) \le \log\lambda_2$, as claimed.

\section{Upper absolute continuity}

We have seen previously that the disintegration of Lebesgue measure along center leaves may be \emph{singular without
being atomic}. This adds to the previously known types of behaviour for the center foliation: \emph{Lebesgue disintegration}
(i.e. \emph{leafwise absolute continuity}) and \emph{atomic disintegration}.

In this section we want to refine our understanding of the non-singular case. Let $\Leb$ denote the Lebesgue measure
in the ambient manifold and $\Leb_L$ be the Lebesgue measure restricted to some submanifold $L$.
Following~\cite{AVW11,AVW12}, we say that a foliation $\cF$ is \emph{upper leafwise absolutely continuous} if
$\Leb_L(Y ) = 0$ for every leaf $L$ through a full Lebesgue measure subset of points $z \in M$ implies $\Leb(Y) = 0$.
Similarly, $\cF$ is \emph{lower leafwise absolutely continuous} if for every zero $\Leb$-measure set $Y \subset M$ and
$\Leb$-almost every $z\in M$, the leaf $L$ through $z$ meets $Y$ in a zero $\Leb_L$-measure set.
Thus, the foliation is upper leafwise absolutely continuous if the conditional measure of $\Leb$ along a typical leaf $L$ is
absolutely continuous with respect to $\Leb_L$ and it is lower leafwise absolutely continuous if $\Leb_L$ is absolutely
continuous with the respect to the conditional measure of $\Leb$ along a typical leaf $L$.
So, Lebesgue disintegration (leafwise absolute continuity) is the same as both upper and lower leafwise absolute continuity.

Pesin theory may be used to show that upper leafwise absolute continuity is actually quite common (see \cite[Proposition~6.2]{ViY13}
for a precise statement). Here we describe fairly robust examples whose center foliations are upper but not lower leafwise
absolutely continuous.

We start from a construction due to Kan~\cite{Ka94}. Let $f_0:S^1 \times [0,1] \to S^1 \times [0,1]$ be a
$C^2$ map of the cylinder of the form  $f_0(\theta,t)=(3\theta, h_\theta(t))$ with
\begin{enumerate}
\item $h_\theta(i)=i$ for every $i\in\{0, 1\}$ and every $\theta\in S^1$;

\item $|h_\theta'(t)| \le c < 3$ for some $c$ and every $\theta\in S^1$;

\item $\int \log |h'_\theta(i)| \, d\theta <0$ for $i\in\{0,1\}$;

\item $|h_0'(0)| < 1$ and $|h_{1/2}'(1)| < 1$ and $h_0(t) < t < h_{1/2}(t)$ for $t\in (0,1)$.
\end{enumerate}

The first condition means that $f_0$ preserves the two boundary components of the cylinder $S^1 \times [0,1]$.
The second one ensures that $f_0$ is a partially hyperbolic endomorphism of the cylinder, with the vertical segments
as center leaves. The restriction of $f_0$ to each boundary component $S^1\times\{i\}$  is uniformly expanding and
preserves the Lebesgue measure $m_i$ on the boundary component.
The third condition means that, for either boundary component, the transverse Lyapunov exponent is negative.
Finally, the fourth condition means that $0$ is an attractor for $h_0$ and $1$ is an attractor for $h_{1/2}$ and
their basins contain the interval $(0,1)$.

Let $\cK$ be a small neighborhood of $f_0$ inside the space of $C^2$ maps of the cylinder preserving both boundary
components. Every $f\in\cK$ is partially hyperbolic, with almost vertical center foliation $\cF^c_f$, and admits
absolutely continuous ergodic invariant measures $m_{i,f}$ on the boundary components. These measures vary continuously
with the map and so their center Lyapunov exponents
$$
\int \log |Df \mid E^c_f(\theta,i)| \, dm_{i,f}(\theta) \approx \int \log |h'_\theta(i)| \, d\theta
$$
($E^c_f$ denotes the center bundle, tangent to $\cF^c_f$) are still negative. This ensures that both $m_{0,f}$
and $m_{1,f}$ are physical measures for $f$. As observed by Kan~\cite{Ka94}, the basins $B(m_{i,f})$ are
intermingled - they are both dense - and their union has full measure in the ambient cylinder. Denote by
$p_0(f)$ and $p_1(f)$ the continuations of the fixed saddle-points $(0,0)$ and $(1/2,1)$, respectively.

\begin{theorem}\label{t.kantype}
For any $f\in \cK$ such that $\partial_\theta f(p_0) \neq \partial_\theta f(p_1)$, the center foliation is upper
leafwise absolutely continuous but not lower leafwise absolutely continuous.
\end{theorem}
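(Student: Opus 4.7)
The plan is to treat the two claims of the theorem separately.

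For upper leafwise absolute continuity, I would first argue that the center Lyapunov exponent is negative at Lebesgue-a.e.\ point. Condition~(3) gives $\int\log|h'_\theta(i)|\,d\theta<0$ for $f_0$ at both boundary components, which by definition is the center Lyapunov exponent of $m_{i,f_0}$; by the continuity of $m_{i,f}$ and $E^c_f$ in $f\in\cK$ (shrinking $\cK$ if necessary), the center exponents of $m_{0,f}$ and $m_{1,f}$ remain negative for all $f\in\cK$. Kan's intermingling gives $\Leb\big(B(m_{0,f})\cup B(m_{1,f})\big)=1$, and every orbit in $B(m_{i,f})$ realizes the Lyapunov spectrum of $m_{i,f}$; hence the local center leaf through $\Leb$-a.e.\ point is an open subset of the corresponding Pesin stable manifold. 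Upper leafwise absolute continuity is then the content of the classical absolute continuity of the Pesin stable lamination, as formalized in \cite[Proposition~6.2]{ViY13}.

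For the failure of lower leafwise absolute continuity, the goal is to construct a set $Y\subset S^1\times[0,1]$ with $\Leb(Y)=0$ that meets $\Leb_L$-a.e.\ center leaf $L$ in a subset of positive $\Leb_L$-measure. Set $\alpha_i=\log\partial_\theta f(p_i)$ for $i=0,1$ and, without loss of generality, suppose $\alpha_0>\alpha_1$. The underlying mechanism is that the ambient Lebesgue measure of a small neighborhood of $p_i$ in the cylinder differs from its leaf-Lebesgue measure by a factor built from the unstable-holonomy Jacobian, which asymptotically scales like $e^{n\alpha_i}$ along $n$ iterates returning near $p_i$. The hypothesis $\alpha_0\neq\alpha_1$ therefore produces a genuine asymmetry between the two basin scalings. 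I would define $Y$ as a subset of $B(m_{1,f})$ consisting of points whose forward orbits exhibit a prescribed return pattern to a small neighborhood of $p_1$, with horizontal expansion rates along those returns close to $\alpha_1$. A Borel--Cantelli argument on the ambient side, whose decay rate is governed by $\alpha_0-\alpha_1$, would deliver $\Leb(Y)=0$, while a Pliss-type selection applied to the backward iterates of plaques inside $L$ (in the spirit of Lemma~\ref{l.Pliss}) would produce a positive-$\Leb_L$-measure subset of $L\cap Y$.

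The hardest step is precisely this leaf-Lebesgue lower bound: one must coordinate the Pesin-block selections along the unstable direction with the center-plaque geometry in such a way that the asymmetry $\alpha_0\neq\alpha_1$ is exploited to keep $\Leb_L(Y\cap L)$ bounded from below on a positive-$\Leb$ set of leaves. The mechanism is conceptually parallel to the non-absolute-continuity constructions for intermediate foliations in \cite{SX09,Gog12}. If the two horizontal eigenvalues agreed, the asymptotic Jacobians along orbits converging to either boundary would be comparable and the construction would collapse to a $\Leb_L$-null set, consistent with the smooth (hence Lebesgue-equivalent) center disintegration enjoyed by the unperturbed $f_0$.
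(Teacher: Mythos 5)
Your first half is essentially the paper's own argument and is fine: the center exponents of $m_{0,f}$ and $m_{1,f}$ stay negative for $f\in\cK$, Kan's intermingling puts Lebesgue-almost every point in one of the two basins, the local center leaf through such a point is a Pesin stable manifold, and upper leafwise absolute continuity follows from the absolute continuity of the Pesin stable lamination (in the paper this is Lemma~\ref{l.basin_stable_set} together with Pesin theory, essentially as in \cite[Proposition~6.2]{ViY13}).

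The second half has a genuine gap: the failure of lower leafwise absolute continuity is the heart of the theorem, and your proposal leaves it as a programme rather than a proof. The construction of $Y$ via ``prescribed return patterns'' near $p_1$, a Borel--Cantelli estimate with rate $\alpha_0-\alpha_1$, and a Pliss-type selection for the leafwise lower bound is never carried out, and you yourself identify the leaf-Lebesgue lower bound as the hardest, uncompleted step; as stated it gives no control of the disintegration of Lebesgue along center leaves versus leaf Lebesgue. Moreover, the quantitative Jacobian-asymmetry mechanism you invoke (in the spirit of \cite{SX09,Gog12}) is not how the hypothesis $\partial_\theta f(p_0)\neq\partial_\theta f(p_1)$ actually enters. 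The paper's key observation is that the center holonomy $\pi_f$ from $S^1\times\{0\}$ to $S^1\times\{1\}$ conjugates the two $C^2$ expanding boundary maps; such a conjugacy is either $C^1$ or completely singular, and since $\pi_f$ sends $p_0(f)$ to $p_1(f)$, a $C^1$ conjugacy would force equal derivatives at these fixed points. Hence $\pi_f$ is completely singular, so $m_{1,f}$ and $(\pi_f)_*m_{0,f}$ (and symmetrically $m_{0,f}$ and $(\pi_f)_*m_{1,f}$) are mutually singular, and the families of center leaves through $m_{0,f}$- and $m_{1,f}$-typical boundary points are disjoint mod~$0$. With this in hand the null set is explicit: $Y_f=W^c(\Lambda_{0,f})\setminus B(m_{0,f})\,\cup\, W^c(\Lambda_{1,f})\setminus B(m_{1,f})$ has zero Lebesgue measure (the basins fill the cylinder and $B(m_{i,f})\subset W^c(\Lambda_{i,f})$ mod~$0$), yet it contains, inside almost every center leaf, a nontrivial Pesin stable segment attached to the endpoint that is typical for the singular measure $(\pi_f)_*m_{i,f}$, whose center exponent is also negative by continuity. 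Without this singular-conjugacy dichotomy, or a completed substitute for it, the second claim is not established. Note also that your closing heuristic is off: if the two derivatives at $p_0,p_1$ agreed, one would merely fail to exclude a $C^1$ conjugacy; equality does not by itself give Lebesgue disintegration for a perturbed $f$.
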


\begin{proof}
Let $\pi_f$ be the holonomy map of the center foliation of $f$ from the bottom boundary component $S^1 \times \{0\}$
to the top boundary component $S^1 \times \{1\}$. Then $\pi_f$ is a homeomorphism and the fact that the center foliation
is invariant means that it conjugates the restrictions of $f$ to the two boundary components.
It is well-known that a conjugacy between two $C^2$ expanding maps is either $C^1$ or completely singular.
The assumption $\partial_\theta f(p_0) \neq \partial_\theta f(p_1)$ prevents the former possibility,
since $\pi_f$ maps $p_{0,f}$ to $p_{1,f}$. Thus, $\pi_f$ is completely singular and so the measures $m_{1,f}$
and $m_{1,f}^*= (\pi_f)_*m_{0,f}$ are mutually singular. Similarly, the measures $m_{0,f}$ and $m_{0,f}^*= (\pi_f)_*m_{1,f}$
are mutually singular. In other words, for $i=0, 1$ there exists a full $m_{i,f}$-measure subset $\Lambda_{i,f}$ of
$S^1\times\{i\}$ such that the sets of center leaves through $\Lambda_{0,f}$ and $\Lambda_{1,f}$ are disjoint.

We claim that all four invariant measures $m_{i,f}$ and $m_{i,f}^*$, $i=0, 1$ have negative center exponents,
assuming $f$ is close enough to $f_0$.
This can be seen as follows. First of all, that is true for $f=f_0$ and in this case, $m_{i,f}=m_{i,f}^* = $
Lebesgue measure along $S^1\times\{i\}$. Then, observe that $f\mapsto m_{i,f}$ is continuous (because the
absolutely continuous invariant measure of a $C^2$ expanding map depends continuously on the map) and $\pi_f$
also depends continuously on $f$ (note that $f=f_0$ the holonomy map is just $(\theta,0) \mapsto (\theta,1)$).
Thus, all these measures vary continuously with $f$. Since the center bundle is one-dimensional, so
do their center Lyapunov exponents. Our claim follows.

We also need the following fact:

\begin{lemma}\label{l.basin_stable_set}
Up to Lebesgue measure zero, for $i=0, 1$, the basin of $m_{i,f}$ coincides with the union of the Pesin stable
manifolds of the points in $\Lambda_{i,f}$, which is contained in the union $W^c(\Lambda_{i,f})$ of the center
leaves through the points of $\Lambda_{i,f}$.
\end{lemma}

This follows from a standard density point argument, see for instance \cite[Proposition~11.1]{Beyond}.
Similar statements appeared also in our previous papers \cite[Lemma~4.6]{DVY16} and \cite[Proposition~6.9]{ViY13},
in somewhat different situations.

Denote $Y_f = W^c(\Lambda_{0,f}) \setminus B(m_{0,f}) \cup W^c(\Lambda_{1,f}) \setminus B(m_{1,f})$.
On the one hand, since the union of the basins $B(m_{i,f})$, $i=0, 1$ has full Lebesgue measure (Kan~\cite{Ka94}),
the set $Y_f$ has zero Lebesgue measure in $S^1\times[0,1]$ and Lebesgue almost every center leaf is contained in
$W^c(\Lambda_{0,f}) \cup W^c(\Lambda_{1,f})$.
On the other hand, $Y_f$ contains the Pesin stable manifold of $m_{i,f}^*$-almost every point, for $i=0, 1$.
In particular, the intersection of $Y_f$ with Lebesgue almost every center leaf contains a non-trivial segment
and, thus, has positive Lebesgue measure inside the center leaf.
This proves that the center foliation is not lower leafwise absolutely continuous.

Finally, by Lemma~\ref{l.basin_stable_set}, the union of the Pesin stable manifolds through $\Lambda_{0,f} \cup \Lambda_{1,f}$
has full Lebesgue measure in $S^1\times[0,1]$. Thus, the partition of the ambient into center leaves coincides
almost everywhere with the Pesin stable lamination. By Pesin theory (see~\cite{Pes76,PSh89} the latter is
absolutely continuous. This ensures that the disintegration of the Lebesgue measure is absolutely continuous
along Pesin stable manifolds and, consequently, along center leaves. This proves that the center foliation is upper
leafwise absolutely continuous.
\end{proof}

\appendix

\section{Atomic disintegration}\label{app.atomic}

Denote by $D^k$ the closed unit disk in $\RR^k$. Let $\cF$ be a \emph{foliation} of dimension $k\ge 1$ of some
manifold $M$ of dimension $d>k$.
By this we mean that every point of $M$ is contained in the interior of some \emph{foliation box}, that is, some
image $\cB$ of a topological embedding
$$
\Phi:D^{d-k} \times D^{k} \to M
$$
such that every \emph{plaque} $P_x=\Phi(\{x\} \times D^{k})$ is contained in a leaf of $\cF$.
We say that $\cF$ has \emph{$C^r$ leaves} if every
$$
\Phi(x,\cdot):D^k \to M, \qquad y \mapsto \Phi(x,y)
$$
is a $C^r$ embedding depending continuously on $x$ in the $C^r$ topology.

Let $\cB$ be a foliation box, identified with the product $D^{d-k}\times D^k$ through the corresponding
homeomorphism $\cB$. By Rokhlin's disintegration theorem (see~\cite[Theorem~5.1.11]{FET}) there exists a
probability measure $\hat\nu$ on $D^{d-k}$ and a family of probability measures $\{\nu_x: x\in D^{d-k}\}$
such that
\begin{equation}\label{eq.rokhlin}
\nu(E) = \int_{D^{d-k}} \nu_x\big(\{y\in D^{k}: (x,y) \in E\}\big) \, d\hat\nu(x)
\end{equation}
for every measurable set $E\subset\cB$. In fact, $\hat\nu$ is just the projection of $\nu$ on the first
coordinate and the family $\{\nu_x: x\in D^{d-k}\}$ is essentially uniquely determined.

We say that $\nu$ has \emph{atomic disintegration} along $\cF$ if for every foliation box $\cB$ and
$\nu$-almost every $x\in\cB$ the \emph{conditional measure} $\nu_x$ gives full weight to some countable
set (equivalently, $\nu_x$ is a countable linear combination of Dirac masses).

\begin{lemma}\label{l.atomic}
$\nu$ has atomic disintegration if and only if for every foliation box $\cB$ there exists a full $\nu$-measure set
$Z\subset\cB$ whose intersection with $\hat\nu$-almost every plaque $P_x$ is countable (possibly finite).
\end{lemma}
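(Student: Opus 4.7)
The plan is to treat the two implications separately, with the measurability of the candidate set $Z$ being the only substantive step.

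For ($\Leftarrow$), suppose $Z\subset\cB$ has full $\nu$-measure and $Z\cap P_x$ is countable for $\hat\nu$-almost every $x$. Applying Rokhlin's formula \eqref{eq.rokhlin} to the complement yields
$$
\int_{D^{d-k}} \nu_x(P_x\setminus Z)\,d\hat\nu(x)=\nu(\cB\setminus Z)=0,
$$
hence $\nu_x(Z\cap P_x)=1$ for $\hat\nu$-almost every $x$. As $\nu_x$ is a probability measure concentrated on a countable set, it is purely atomic, so $\nu$ has atomic disintegration.

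For ($\Rightarrow$), the natural candidate is
$$
Z=\{(x,y)\in\cB : \nu_x(\{y\})>0\}.
$$
Granting that $Z$ is Borel, the remaining properties are essentially automatic. The slice $Z\cap P_x$ is the set of atoms of $\nu_x$, which is the increasing union of the finite sets $\{y : \nu_x(\{y\})\geq 1/n\}$ (each of cardinality at most $n$) and is therefore countable. The atomicity hypothesis gives $\nu_x(Z\cap P_x)=1$ for $\hat\nu$-almost every $x$, so \eqref{eq.rokhlin} produces $\nu(Z)=\nu(\cB)$.

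The main obstacle, and the only step requiring real work, is verifying that $Z$ is a Borel subset of $\cB$. My plan is to equip the space $\cP(D^k)$ of Borel probabilities on $D^k$ with the weak-$*$ topology and show that
$$
F:\cP(D^k)\times D^k \to[0,1], \qquad F(\mu,y)=\mu(\{y\}),
$$
is upper semi-continuous. Indeed, if $(\mu_n,y_n)\to(\mu,y)$, then for any $\varepsilon>0$ the point $y_n$ eventually lies in the closed ball $\overline{B}(y,\varepsilon)$, so $\mu_n(\{y_n\})\leq \mu_n(\overline{B}(y,\varepsilon))$, and the Portmanteau theorem gives $\limsup_n \mu_n(\overline{B}(y,\varepsilon))\leq \mu(\overline{B}(y,\varepsilon))$; letting $\varepsilon\to 0$ yields $\limsup_n F(\mu_n,y_n)\leq F(\mu,y)$. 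Consequently $\{F>0\}=\bigcup_{n\geq 1}\{F\geq 1/n\}$ is an $F_\sigma$ set, hence Borel. Since Rokhlin's disintegration theorem provides $x\mapsto\nu_x$ as a Borel map from $D^{d-k}$ into $\cP(D^k)$, the preimage $Z=\{(x,y) : F(\nu_x,y)>0\}$ is Borel, which completes the argument.
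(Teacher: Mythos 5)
Your proof is correct, and both directions match the paper's overall structure: the easy implication via the disintegration formula, and for the converse the candidate set of atoms $Z=\{(x,y):\nu_x(\{y\})>0\}$, with the whole weight of the argument on its measurability. Where you differ is in how that measurability is established. The paper first uses Lusin's theorem to extract compact sets $K_\vep\subset D^{d-k}$ of measure $>1-\vep$ on which $x\mapsto\nu_x$ is weak$^*$ continuous, and then shows that the sets $\{(x,y): x\in K_\vep,\ \nu_x(\{y\})\ge\delta\}$ are closed, obtaining $Z$ as a countable union of such sets, i.e.\ measurability only modulo $\nu$-null sets. You instead prove once and for all that $(\mu,y)\mapsto\mu(\{y\})$ is jointly upper semi-continuous on $\cP(D^k)\times D^k$ (your Portmanteau argument is fine, since the closed balls $\overline{B}(y,\vep)$ decrease to $\{y\}$), so the atom set is an $F_\sigma$ in $\cP(D^k)\times D^k$ and $Z$ is its preimage under $(x,y)\mapsto(\nu_x,y)$. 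This is cleaner: it avoids Lusin entirely, and it shows the atom set is Borel independently of any atomicity assumption, with the compactness/exhaustion bookkeeping of the paper replaced by one semicontinuity statement. The only caveat is your phrase ``Rokhlin's disintegration theorem provides $x\mapsto\nu_x$ as a Borel map'': strictly, Rokhlin gives $x\mapsto\nu_x(V)$ measurable only up to $\hat\nu$-null sets, so $x\mapsto\nu_x$ is a $\hat\nu$-measurable map that agrees with a Borel map off a null set; since the lemma's conclusion is anyway only required modulo null sets, this does not affect the argument, but it should be said.
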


\begin{proof}
Suppose that there exists some full $\nu$-measure set $Z\subset\cB$ whose intersection with $\hat\nu$-almost
every plaque $P_x$ is countable. By \eqref{eq.rokhlin}, it follows that $\nu_x(Z \cap P_x) = 1$
(and so $\nu_x$ is a purely atomic measure) for $\hat\nu$-almost every $x$.
The converse is also true: if $\nu_x$ is purely atomic for $\hat\nu$-almost every $x$ then one may find
a full measure subset $Z$ of $\cB$ that intersects every plaque on a countable subset.
This can be deduced from the claim in Rokhlin~\cite[\S~1.10]{Rok67a} but, for the reader's convenience,
we provide a quick direct explanation.

The idea is quite simple: we take $Z = \cup_x \{x\} \times Y(x)$ where each $\{x\} \times Y(x)$
is a countable full $\nu_x$-measure subset of the plaque $P_x$. The main point is to check that $Z$ is
a measurable set (up to measure zero). Once that is done, \eqref{eq.rokhlin} immediately gives that
$Z$ has full $\nu$-measure. To prove measurability, start by fixing some countable basis $\cV$ for
the topology of $\cB$. It is also part of Rokhlin's theorem that the map $x\mapsto \nu_x(V)$ is measurable
(up to measure zero) for any measurable set $V\subset\cB$. Thus, given any $\vep>0$, one may find a
compact set $K_\vep\subset D^{d-k}$ such that $\hat\nu(K_\vep) > 1 - \vep$ and $x \mapsto \nu_x(V)$ is
continuous on $K_\vep$, for every $V\in\cV$. In particular, $x \mapsto \nu_x$ is continuous with respect
to the weak$^*$ topology for $x\in K_\vep$.

It is no restriction to suppose that $\nu_x$ is purely atomic for every $x\in K_\vep$, and we do so.
Let $\vep>0$ be fixed. It is clear that, given any $\delta>0$, the set
$$
\Gamma_\delta(x) = \{y \in D^{k}: \nu_x(\{y\})\ge\delta\}
$$
is finite, and hence compact. Moreover, the properties of $K_\vep$ ensure that the function
$x\mapsto \Gamma(x,\delta)$ is upper semi-continuous on $x\in K_\vep$. In other words,
$$
\Lambda(\vep,\delta)=\{(x,y): x\in K_\vep \text{ and }  y \in \Gamma_\delta(x)\}
$$
is a closed subset of $\cB$. Since $\nu_x$ is purely atomic for every $x\in K_\vep$, the union
$\Lambda(\vep)=\cup_n \Lambda(\vep,1/n)$ is a (measurable) full measure subset of $K_\vep \times D^{k}$
contained in $Z$. Then, $\cup_m \Lambda(1/m)$ is a (measurable) full measure subset of $\cB$ contained
in $Z$. This proves that $Z$ is measurable up to measure zero, as claimed.
\end{proof}

Notice that the statement of the lemma concerns the intersection of $Z$ with plaques of the foliation,
not entire leaves. In the special case of foliations of dimension $k=1$ one can do a bit better.
Indeed, consider any finite cover of the ambient manifold by foliation boxes. Since $1$-dimensional
manifolds have only two ends, every leaf can intersect these foliation boxes at most countably many
times, that is, every leaf is covered by countably many plaques. Thus for $k=1$ the condition in
Lemma~\ref{l.atomic} may be reformulated equivalently as follows: there exists a full $\nu$-measure
set $Z\subset\cB$ such that $Z \cap \cF_x$  is countable for $\hat\nu$-almost every $x$.
This conclusion extends to large $k$ under the additional condition that every leaf has countably
many ends.


\begin{thebibliography}{10}

\bibitem{An67}
D.~V. Anosov.
\newblock Geodesic flows on closed {R}iemannian manifolds of negative
  curvature.
\newblock {\em Proc. Steklov Math. Inst.}, 90:1--235, 1967.

\bibitem{AS67}
D.~V. Anosov and Ya.~G. Sinai.
\newblock Certain smooth ergodic systems.
\newblock {\em Russian Math. Surveys}, 22:103--167, 1967.

\bibitem{AVW12}
A.~Avila, M.~Viana, and A.~Wilkinson.
\newblock Absolute continuity, {L}yapunov exponents and rigidity {II}: compact
  center leaves.
\newblock In preparation.

\bibitem{AVW11}
A.~Avila, M.~Viana, and A.~Wilkinson.
\newblock Absolute continuity, {L}yapunov exponents and rigidity {I}: geodesic
  flows.
\newblock {\em J. Eur. Math. Soc. (JEMS)}, 17:1435--1462, 2015.

\bibitem{BPS99}
L.~Barreira, Ya. Pesin, and J.~Schmeling.
\newblock Dimension and product structure of hyperbolic measures.
\newblock {\em Ann. of Math.}, 149:755--783, 1999.

\bibitem{Beyond}
C.~Bonatti, L.~J. D{\'{\i}}az, and M.~Viana.
\newblock {\em Dynamics beyond uniform hyperbolicity}, volume 102 of {\em
  Encyclopaedia of Mathematical Sciences}.
\newblock Springer-Verlag, 2005.

\bibitem{BBI09}
M.~Brin, D.~Burago, and S.~Ivanov.
\newblock Dynamical coherence of partially hyperbolic diffeomorphisms of the
  3-torus.
\newblock {\em J. Mod. Dyn.}, 3:1--11, 2009.

\bibitem{BP74}
M.~Brin and Ya. Pesin.
\newblock Partially hyperbolic dynamical systems.
\newblock {\em Izv. Acad. Nauk. SSSR}, 1:177--212, 1974.

\bibitem{BW10}
K.~Burns and A.~Wilkinson.
\newblock On the ergodicity of partially hyperbolic systems.
\newblock {\em Annals of Math.}, 171:451--489, 2010.

\bibitem{CaV77}
C.~Castaing and M.~Valadier.
\newblock {\em Convex analysis and measurable multifunctions}.
\newblock Lecture Notes in Mathematics, Vol. 580. Springer-Verlag, 1977.

\bibitem{Cog00}
K.~Cogswell.
\newblock Entropy and volume growth.
\newblock {\em Ergodic Theory Dynam. Systems}, 20:77--84, 2000.

\bibitem{DVY16}
D.~Dolgopyat, M.~Viana, and J.~Yang.
\newblock Geometric and measure-theoretical structures of maps with mostly
  contracting center.
\newblock {\em Comm. Math. Physics}, 341:991--–1014, 2016.

\bibitem{FPS14}
T.~Fisher, R.~Potrie, and M.~Sambarino.
\newblock Dynamical coherence of partially hyperbolic diffeomorphisms of tori
  isotopic to {A}nosov.
\newblock {\em Math. Z.}, 278:149--168, 2014.

\bibitem{Fra70}
J.~Franks.
\newblock Anosov diffeomorphisms.
\newblock In {\em Global {A}nalysis ({P}roc. {S}ympos. {P}ure {M}ath., {V}ol.
  {XIV}, {B}erkeley, {C}alif., 1968)}, pages 61--93. Amer. Math. Soc., 1970.

\bibitem{Gog12}
A.~Gogolev.
\newblock How typical are pathological foliations in partially hyperbolic
  dynamics: an example.
\newblock {\em Israel J. Math.}, 187:493--507, 2012.

\bibitem{Go07}
N.~Gourmelon.
\newblock Adapted metrics for dominated splittings.
\newblock {\em Ergod. Th. {\&} Dynam. Sys.}, 27:1839--1849, 2007.

\bibitem{Ham14}
A.~Hammerlindl.
\newblock Centre bunching without dynamical coherence.
\newblock Preprint http://www.maths.usyd.edu.au/u/andy/.

\bibitem{Ham13}
A.~Hammerlindl.
\newblock Leaf conjugacies on the torus.
\newblock {\em Ergodic Theory Dynam. Systems}, 33:896--933, 2013.

\bibitem{HaP14}
A.~Hammerlindl and R.~Potrie.
\newblock Pointwise partial hyperbolicity in three-dimensional nilmanifolds.
\newblock {\em J. Lond. Math. Soc.}, 89:853--875, 2014.

\bibitem{HaU14}
A.~Hammerlindl and R.~Ures.
\newblock Ergodicity and partial hyperbolicity on the 3-torus.
\newblock {\em Commun. Contemp. Math.}, 16:1350038, 22, 2014.

\bibitem{HHUcoh}
F.~Rodriguez Hertz, M.~A.~Rodriguez Hertz, and R.~Ures.
\newblock A non-dynamically coherent example on $\mathbb{T}^3$.

\bibitem{HiP07}
M.~Hirayama and Ya. Pesin.
\newblock Non-absolutely continuous foliations.
\newblock {\em Israel J. Math.}, 160:173--187, 2007.

\bibitem{HPS70}
M.~Hirsch, C.~Pugh, and M.~Shub.
\newblock Invariant manifolds.
\newblock {\em Bull. Amer. Math. Soc.}, 76:1015--1019, 1970.

\bibitem{HPS77}
M.~Hirsch, C.~Pugh, and M.~Shub.
\newblock {\em Invariant manifolds}, volume 583 of {\em Lect. Notes in Math.}
\newblock Springer Verlag, 1977.

\bibitem{Ka94}
I.~Kan.
\newblock Open sets of diffeomorphisms having two attractors, each with an
  everywhere dense basin.
\newblock {\em Bull. Amer. Math. Soc.}, 31:68--74, 1994.

\bibitem{LS82}
F.~Ledrappier and J.-M. Strelcyn.
\newblock A proof of the estimation from below in {P}esin's entropy formula.
\newblock {\em Ergodic Theory Dynam. Systems}, 2:203--219 (1983), 1982.

\bibitem{LeW77}
F.~Ledrappier and P.~Walters.
\newblock A relativised variational principle for continuous transformations.
\newblock {\em J. London Math. Soc.}, 16:568--576, 1977.

\bibitem{LeX11}
F.~Ledrappier and J.-X. Xie.
\newblock Vanishing transverse entropy in smooth ergodic theory.
\newblock {\em Ergodic Theory Dynam. Systems}, 31:1229--1235, 2011.

\bibitem{LY85a}
F.~Ledrappier and L.-S. Young.
\newblock The metric entropy of diffeomorphisms. {I}. {C}haracterization of
  measures satisfying {P}esin's entropy formula.
\newblock {\em Ann. of Math.}, 122:509--539, 1985.

\bibitem{LY85b}
F.~Ledrappier and L.-S. Young.
\newblock The metric entropy of diffeomorphisms. {I}{I}. {R}elations between
  entropy, exponents and dimension.
\newblock {\em Ann. of Math.}, 122:540--574, 1985.

\bibitem{Mi97a}
J.~Milnor.
\newblock Fubini foiled: {K}atok's paradoxical example in measure theory.
\newblock {\em Math. Intelligencer}, 19:30--32, 1997.

\bibitem{Pes04}
Ya. Pesin.
\newblock {\em Lectures on partial hyperbolicity and stable ergodicity}.
\newblock Zurich Lectures in Advanced Mathematics. European Mathematical
  Society (EMS), Z\"urich, 2004.

\bibitem{Pes76}
Ya.~B. Pesin.
\newblock Families of invariant manifolds corresponding to non-zero
  characteristic exponents.
\newblock {\em Math. USSR. Izv.}, 10:1261--1302, 1976.

\bibitem{PT14}
G.~Ponce and A.~Tahzibi.
\newblock Central {L}yapunov exponent of partially hyperbolic diffeomorphisms
  of {$\Bbb{T}^3$}.
\newblock {\em Proc. Amer. Math. Soc.}, 142:3193--3205, 2014.

\bibitem{PTV14}
G.~Ponce, A.~Tahzibi, and R.~Var{\~a}o.
\newblock Minimal yet measurable foliations.
\newblock {\em J. Mod. Dyn.}, 8:93--107, 2014.

\bibitem{Pot15}
R.~Potrie.
\newblock Partial hyperbolicity and foliations in {$\Bbb{T}^3$}.
\newblock {\em J. Mod. Dyn.}, 9:81--121, 2015.

\bibitem{PSh72}
C.~Pugh and M.~Shub.
\newblock Ergodicity of {A}nosov actions.
\newblock {\em Invent. Math.}, 15:1--23, 1972.

\bibitem{PSh89}
C.~Pugh and M.~Shub.
\newblock Ergodic attractors.
\newblock {\em Trans. Amer. Math. Soc.}, 312:1--54, 1989.

\bibitem{PSh97}
C.~Pugh and M.~Shub.
\newblock Stably ergodic dynamical systems and partial hyperbolicity.
\newblock {\em J. Complexity}, 13:125--179, 1997.

\bibitem{Rok67a}
V.~A. Rokhlin.
\newblock Lectures on the entropy theory of measure-preserving transformations.
\newblock {\em Russ. Math. Surveys}, 22 -5:1--52, 1967.
\newblock Transl. from Uspekhi Mat. Nauk. 22 - 5 (1967), 3--56.

\bibitem{RW01}
D.~Ruelle and A.~Wilkinson.
\newblock Absolutely singular dynamical foliations.
\newblock {\em Comm. Math. Phys.}, 219:481--487, 2001.

\bibitem{SX09}
R.~Saghin and Z.~Xia.
\newblock Geometric expansion, {L}yapunov exponents and foliations.
\newblock {\em Ann. Inst. H. Poincar\'e Anal. Non Lin\'eaire}, 26:689--704,
  2009.

\bibitem{SW00}
M.~Shub and A.~Wilkinson.
\newblock Pathological foliations and removable zero exponents.
\newblock {\em Invent. Math.}, 139:495--508, 2000.

\bibitem{Sm67}
S.~Smale.
\newblock Differentiable dynamical systems.
\newblock {\em Bull. Am. Math. Soc.}, 73:747--817, 1967.

\bibitem{Ure12}
R.~Ures.
\newblock Intrinsic ergodicity of partially hyperbolic diffeomorphisms with a
  hyperbolic linear part.
\newblock {\em Proc. Amer. Math. Soc.}, 140:1973--1985, 2012.

\bibitem{Var14}
R.~Var{\~a}o.
\newblock Center foliation: absolute continuity, disintegration and rigidity.
\newblock {\em Ergod. Th. \& Dynam. Sys.}, 2016.

\bibitem{FET}
M.~Viana and K.~Oliveira.
\newblock {\em Foundations of Ergodic Theory}.
\newblock Cambridge University Press, 2015.

\bibitem{ViY13}
M.~Viana and J.~Yang.
\newblock Physical measures and absolute continuity for one-dimensional center
  direction.
\newblock {\em Ann. Inst. H. Poincar\'e Anal. Non Lin\'eaire}, 30:845--877,
  2013.

\bibitem{Wil98}
A.~Wilkinson.
\newblock Stable ergodicity of the time-one map of a geodesic flow.
\newblock {\em Ergod. Th. {\&} Dynam. Sys.}, 18:1545--1587, 1998.

\bibitem{Xie14}
J.-S. Xie.
\newblock A new entropy formula of {L}edrappier-{Y}oung type for linear toral
  dynamics.
\newblock {\em Ergodic Theory Dynam. Systems}, 34:1037--1054, 2014.

\bibitem{Yang16}
J.~Yang.
\newblock Entropy along expanding foliations.
\newblock www.arxiv.org.

\bibitem{Yang07}
J.~Yang.
\newblock Ergodic measures far away from tangencies.
\newblock 2007.
\newblock www.preprint.impa.br.

\end{thebibliography}
\end{document}